\theoremstyle{plain}
\newtheorem{Th}{Theorem}[section]
\newtheorem{Cor}[Th]{Corollary}
\newtheorem{Lem}[Th]{Lemma}
\newtheorem{Prop}[Th]{Proposition}
\newtheorem{Conj}[Th]{Conjecture}
\newtheorem{Def}{Definition}[section]
\newtheorem{Ex}{Example}[section]
\newtheorem{Rem}{Remark}[section]
\numberwithin{equation}{section}
\DeclareMathOperator{\ima}{Im}
\DeclareMathOperator{\vbh}{\mathit{VBH}}
\DeclareMathOperator{\bh}{\mathit{BH}}
\newcommand{\der}{\mathrm{Der}(\hm A)}
\newcommand{\diff}[2]{\frac{\partial #1}{\partial #2}}
\newcommand{\qa}{\alpha}
\newcommand{\qb}{\beta}
\newcommand{\qd}{\delta}
\newcommand{\qg}{\gamma}
\newcommand{\qs}{\sigma}
\newcommand{\qt}{\tau}
\newcommand{\qth}{\theta}
\newcommand{\qe}{\varepsilon}
\newcommand{\qp}{\partial}
\newcommand{\qo}{\omega}
\newcommand{\Qg}{\Gamma}
\newcommand{\ql}{\lambda}
\newcommand{\Qd}{\Delta}
\newcommand{\Qo}{\Omega}
\newcommand{\vard}[2]{\frac{\delta #1}{\delta #2}}
\newcommand{\bm}[1]{\bar{\mathcal #1}}
\newcommand{\hm}[1]{\hat{\mathcal #1}}
\newcommand{\aij}[2]{\partial_{#1}f^{#2}}
\newcommand{\bij}[2]{f^{#1}\frac{\partial_{#1}f^{#2}}{f^{#2}}}
\newcommand{\xym}[1]{\begin{center}\leavevmode \xymatrix{#1} \end{center}}
\newcommand{\lrb}[1]{\left(#1\right)}
\newcommand{\kk}[1]{\left(#1\right)}
\newcommand{\ee}[1]{\begin{equation}#1\end{equation}}
\newcommand{\aaa}[1]{\begin{align}#1\end{align}}
\newcommand{\fk}[2]{\left[#1, #2\right]}
\begin{document}

\title{Variational Bihamiltonian Cohomologies and Integrable Hierarchies I: Foundations}

\author{Si-Qi Liu, Zhe Wang, Youjin Zhang}
\keywords{Bihamiltonian Cohomologies, Variational Bihamiltonian Cohomologies, Conformal Bihamiltonian Structures, Integrable Hierarchies}

\begin{abstract}
This series of papers is devoted to the study of deformations of  Virasoro symmetries of the principal hierarchies associated to  semisimple Frobenius manifolds. The main tool we use is a generalization of the bihamiltonian cohomology called the \textit{variational bihamiltonian cohomology}. In the present paper, we give its definitions and compute the associated  cohomology groups that will be used in our study of deformations of Virasoro symmetries. To illustrate its application, we classify the conformal bihamiltonian structures with semisimple hydrodynamic limits.
\end{abstract}

\date{\today}

\maketitle
\tableofcontents


\section{Introduction}\label{intro}
Since Dubrovin introduced the notion of Frobenius manifold in the beginning of 90's of the last century \cite{dubrovin1993integrable, dubrovin1996geometry} inspired by the development of 2d topological field theory \cite{dijkgraaf1991notes,dijkgraaf1991topological, kontsevich1992intersection,witten1990structure, witten1990two}, its relationship with hierarchies of integrable PDEs has become an important research subject in the theory of integrable systems and its applications in the study of Gromov-Witten invariants, singularity theory and quantum field theory, see some of the related works  
\cite{buryak2015double,
dubrovin1998bihamiltonian, 
dubrovin1999frobenius, dubrovin2001normal, dubrovin2004virasoro, 
eguchi1997quantum,
eguchi1995genus,fan2013witten,
getzler2001toda,givental2001gromov,  
givental2001semisimple, 
givental2005simple, 
liu2015bcfg, milanov2016gromov,zhang2002cp} and references therein.
In \cite{dubrovin2001normal}, Dubrovin and Zhang, the third author of the present paper, proposed a project to classify hierarchies of integrable PDEs which possess hydrodynamic limits and satisfy the following properties: each integrable hierarchy has a bihamiltonian structure, a tau function (also called a tau structure), and an infinite set of Virasoro symmetries that act linearly on the tau function. The bihamiltonian structure of the integrable hierarchy is assumed to be represented as a formal power series w.r.t. the dispersion parameter $\qe$, its coefficients are given by homogeneous differential polynomials of the dependent variables of the integrable hierarchy, and its dispersionless limit
is a bihamiltonian structure of hydrodynamic type. The existence of a bihamiltonian structure and of a tau function implies that the dispersionless limit of the integrable hierarchy can be described by a Frobenius manifold structure or a degenerate one. On the other hand, starting from any Frobenius manifold, one can construct an integrable hierarchy of hydrodynamic type which is called the Principal Hierarchy of the Frobenius manifold. The flat metric and the intersection form of the Frobenius manifold yield a bihamiltonian structure of the Principal Hierarchy which also possesses a tau function and an infinite set of Virasoro symmetries. The Virasoro symmetries, apart from the first two, of the Principal Hierarchy do not act linearly on the tau function. Under the semisimplicity condition of the Frobenius manifold, the requirement of linear actions of the Virasoro symmetries on the tau function leads to a dispersionful deformation, called the topological deformation, of the Principal hierarchy. As it is shown in \cite{dubrovin2001normal}, the topological deformation of the Principal Hierarchy is constructed via a quasi-Miura transformation determined by the loop equation of the semisimple Frobenius manifold.

To classify dispersionful deformations of the Principal Hierarchy, Dubrovin and Zhang introduced in \cite{dubrovin2001normal} the notion of bihamiltonian cohomology $\bh^k(M; P_0, P_1)$ for a bihamiltonian structure $(P_0, P_1)$ of hydrodynamic type defined on the jet space of a smooth manifold $M$. The cohomology groups $\bh^2(M; P_0, P_1)$ and $\bh^3(M; P_0, P_1)$ characterize the equivalence classes of infinitesimal deformations of $(P_0, P_1)$ under Miura type transformations and the obstruction of extending an infinitesimal deformations to a full deformation respectively. Here we emphasize that the deformed bihamiltonian structures are required to have the property that the coefficients of the powers of the deformation parameter are given by homogeneous differential polynomials of the dependent variables, this property is called the {\em polynomiality} of the deformed Hamiltonian structures. 

In \cite{lorenzoni2002deformations} Lorenzoni studied the deformations of the bihamiltonian structure of the dispersionless KdV hierarchy and showed,
up to the fourth order approximation of the deformation parameter, that they are parametrized by a function of one variable.
The very first computation of the bihamiltonian cohomology groups for a semisimple bihamiltonian structure of hydrodynamic type is given in \cite{ DLZ-1,liu2005deformations}, from which the cohomology group $BH^2_{\ge 2}(M; P_0, P_1)$ can be obtained, and it shows that the equivalence classes of the infinitesimal deformations of $(P_0, P_1)$ with $n$ dependent variables are parametrized by a set of smooth functions $c_1(u_1), \dots, c_n(u_n)$, where $u_1, \dots, u_n$ are the canonical coordinates of the semisimple bihamiltonian structure of hydrodynamic type. These functions are called the central invariants of the deformed bihamiltonian structure. It is also conjectured that the cohomology group $\bh^3(M; P_0, P_1)$ is trivial, and that any infinitesimal deformation of $(P_0, P_1)$ can be extended to a full deformation. 

An important observation toward computing the cohomology group $\bh^3(M; P_1, P_2)$ was made by the first and third author of the present paper in \cite{liu2013bihamiltonian}, where they showed that one can work in the space of differential polynomials instead of the space of the local functionals to compute the bihamiltonian cohomology. By using this idea they proved the afore mentioned conjecture for the bihamiltonian structure of hydrodynamic type associated to the one dimensional Frobenius manifold (or the dispersionless KdV hierarchy). Base on the method of \cite{liu2013bihamiltonian} and on a clever utilizing of the technique of spectral sequences and some other tools from homological algebra, Carlet, Posthuma and Shadrin proved this conjecture for a general semisimple bihamiltonian structure of hydrodynamic type in \cite{carlet2018deformations}. 
 
A deformation of the bihamiltonian structure of the Principal hierarchy of a semisimple Frobenius manifold yields a deformation of the integrable hierarchy. It is shown in \cite{dubrovin2018bihamiltonian} that the deformed integrable hierarchy possesses a tau structure when the central invariants of the deformed bihamiltonian structure are constant functions. 
However, it remains to be unclear whether this deformed integrable hierarchy also possesses an infinite set of Virasoro symmetries. 
In particular, it is conjectured that the deformation of the Principal Hierarchy with all the central invariants being equal to $\frac1{24}$ possesses an infinite set of Virasoro symmetries which act linearly on the tau function of the deformed integrable hierarchy, or in other words, it coincides with the topological deformation of the Principal Hierarchy. 
An approach to prove the validity of this conjecture is as follows: one needs to show that the quasi-Miura transformation, which transforms the Principal Hierarchy to its topological deformation, also transforms the bihamiltonian structure of hydrodynamic type to a deformed bihamiltonian structure which satisfies the polynomiality property. In \cite{buryak2012deformations, buryak2012polynomial} Buryak, Posthuma and Shadrin proved the polynomiality property of the first deformed Hamiltonian structure of the Principal Hierarchy, and in the recent preprint 
\cite{iglesias2021bi} Iglesias and Shadrin provide a certain evidence of the validity of the polynomiality property of the second Hamiltonian structure.

The main purpose of the present paper and of its subsequents is to show that the deformation of the Principal Hierarchy of a semisimple Frobenius manifold which is associated to a deformed bihamiltonian structure with constant central invariants possesses an infinite set of Virasoro symmetries, and these Virasoro symmetries can be lifted to the actions on the tau function of the deformed integrable hierarchy; moreover, for the case when all the central invariants equal to $\frac1{24}$, the Virasoro symmetries act linearly on the tau function, and so this deformed integrable hierarchy is equivalent to the topological deformation of the Principal Hierarchy. In particular, this implies that the topological deformation of the Principal Hierarchy of a semisimple Frobenius manifold possesses a bihamiltonian structure which satisfies the polynomiality property. To this end, we first generalize in the present paper the notion of bihamiltonian cohomology by considering a certain cohomology, called the variational bihamiltonian cohomology, on the space of variational $1$-forms of the infinite jet space of the dependent variables, and apply it to the study of Virasoro symmetries of the deformed integrable hierarchy in the subsequent papers.

The paper is organized as follows. In Sect.\,\ref{ms}, we recall the basic notions of bihamiltonian structures in terms of local functionals on the infinite jet bundle of a super manifold, and we present the main results of this paper. In Sect.\,\ref{vbc}, we first define the variational Hamiltonian cohomology and prove the triviality of the cohomology groups, and then we define the variational bihamiltonian cohomology. Sect.\,\ref{vbh23} and Sect.\,\ref{vsh} are both devoted to the computation of the variational bihamiltonian cohomology groups. As an illustration of the application of this theory, we classify the conformal bihamiltonian structures and their deformations in Sect.\,\ref{conformal}. In Sect.\,\ref{con} we give some concluding remarks.

\section{Basic notions and Main results}\label{ms}
In this section, we first recall some basic definitions and constructions on bihamiltonian structures in terms of the language of super variables. The readers may refer to \cite{liu2011jacobi, liu2013bihamiltonian} and the  lecture note \cite{liu2018lecture} for a detailed introduction to this topic. 

Let $M$ be an $n$-dimensional smooth manifold, and $\hat M = \Pi(T^*M)$ be the super manifold of dimension $(n\mid n)$ obtained by reversing the parity of the fiber of the cotangent bundle of $M$. Locally, one may choose a coordinate system $(u^1,\cdots,u^n)$ for an open neighborhood $U\subset M$ and we will use  $(\qth_1,\cdots,\qth_n)$ to denote the coordinate system of the fibers. The variables $\qth_i$ are fermion, meaning that they satisfy the anti-commutation relations $\qth_i\qth_j + \qth_j\qth_i = 0$. Let $J^\infty(\hat M)$ be the infinite jet bundle of $\hat M$ and let $\hat{\mathcal A}$ be the ring of differential polynomials. In local coordinates, we can write it as
\[\hat{\mathcal A} = C^\infty(U)[[u^{i,s+1}, \theta^s_i\mid i = 1,\cdots,n;s\geq 0]].\]
Here and henceforth we denote $u^{i,0} = u^i$ and ${\qth_i^0} = \qth_i$. The jet variables $u^{i,s}, \qth_i^s$ for $s\geq 1	$ may be regarded as $s$-th order derivatives with respect to an independent variable, which we usually denote as $x$,  when regarding $u^i$ and $\qth_i$ as dependent variables. This identification is possible due to the existence of the global vector field
\begin{equation}\label{dx-zh}
\qp_x = \sum_{s\geq 0}u^{i,s+1}\diff{}{u^{i,s}}+\qth_i^{s+1}\diff{}{\qth_i^s}.
\end{equation}
Sometimes we will use a prime to denote $\qp_x$, i.e. $f':=\qp_xf$. The elements in the quotient space $\hat{\mathcal F} = \hat{\mathcal A}/\qp_x$ are called local functionals and for an element $f\in\hat{\mathcal A}$, we will use $\int f$ to denote its class in $\hat{\mathcal F}$. Let us define two degrees $\deg_{x}$ and $\deg_\qth$ on the space $\hm A$, which are called respectively the differential gradation and the super gradation. On the generators of $\hm A$, these degrees are defined by
\[
\deg_x u^{i,s} = \deg_x \qth^{s}_i = s;\quad \deg_\qth u^{i,s} = 0,\quad \deg_\qth \qth_i^s = 1.
\]
We denote the space of homogeneous elements by
\[
\hat{\mathcal A}_d = \{f\in\hat{\mathcal A}\mid \deg_x f = d\},\quad \hat{\mathcal A}^p = \{f\in\hat{\mathcal A}\mid  \deg_\qth f = p\},\quad \hat{\mathcal A}^p_d = \hat{\mathcal A}^p\cap \hat{\mathcal A}_d.
\]
It is clear that the vector field $\qp_x$ is homogeneous with respect to both degrees, hence the gradations on $\hat{\mathcal A}$ induce a gradation on $\hat{\mathcal F}$ and we will denote the space of homogeneous elements by $\hat{\mathcal F}_d$, $\hat{\mathcal F}^p$ and $\hat{\mathcal F}^p_d$ accordingly.

A nontrivial construction called the Schouten-Nijenhuis bracket equips the space $\hat{\mathcal F}$ with a graded Lie algebra structure. Define the variational derivative of an element in $\hat{\mathcal A}$ by
\[
\vard{f}{u^i} = \sum_{s\geq 0}(-\qp_x)^s\diff{f}{u^{i,s}},\quad \vard{f}{\qth_i^s} = \sum_{s\geq 0}(-\qp_x)^s\diff{f}{\qth_i^s}.
\]
It can be shown that the variational derivative annihilates the image of $\qp_x$ so we can define the variational derivative of a local functional as follows:
\[\vard{F}{u^i} = \int \vard{f}{u^i},\quad \vard{F}{\qth_i} = \int \vard{f}{\qth_i},\quad F = \int f\in\hat{\mathcal F}.
\]
Then the Schouten-Nijenhuis bracket is defined to be the following bilinear map 
\[[-,\ -]: \hat{\mathcal F}^p\times \hat{\mathcal F}^q\to\hat{\mathcal F}^{p+q-1},\quad
[P,Q] = \int \sum_i \vard{P}{\qth_i}\vard{Q}{u^i}+(-1)^p\vard{P}{u^i}\vard{Q}{\qth_i}.
\]
This bracket defines a graded Lie algebra structure on $\hm F$, whose sign convention is different from the usual definition, for more details one may refer to \cite{liu2011jacobi}.

For a local functional $P\in\hat{\mathcal F}^p$, we can associate a derivation on $\hat{\mathcal A}$ defined by
\begin{equation}
\label{def-d}
D_P = \sum_{i}\sum_{s\geq 0}\qp_x^s\left(\vard{P}{\qth_i}\right)\diff{}{u^{i,s}}+(-1)^p\qp_x^s\left(\vard{P}{u^i}\right)\diff{}{\qth^{s}_i}.
\end{equation}
We see that $D_P$ is a derivation of super degree $p-1$ and that $[D_P,\qp_x] = 0$. The following property is important for the construction of the cohomologies that will be given later in this and the next section:
\begin{equation}
\label{comm-d}
(-1)^{p-1}D_{[P,Q]} = D_P\comp D_Q-(-1)^{(p-1)(q-1)}D_Q\comp D_P;\quad P\in\hat{\mathcal F}^p,\ Q\in\hat{\mathcal F}^q.
\end{equation}
This relation shows that the map $D$ induces a (graded) Lie algebra homomorphism
\[\hm F\to\mathrm{Der}(\hat{\mathcal A}):\quad P\mapsto D_P,\]
here the natural graded Lie algebra structure on the space of derivations on $\hat{\mathcal A}$, denoted by $\mathrm{Der}(\hat{\mathcal A})$, is given by the graded commutators.
In what follows, we will also call an element of $\mathrm{Der}(\hat{\mathcal A})$ a vector field on $J^\infty(\hat M)$.

Using the above notions, a Hamiltonian structure is defined to be a local functional $P\in\hat{\mathcal F}^2$ satisfying $[P,P] = 0$, and it is called of hydrodynamic type if $P\in \hat{\mathcal F}^2_1$. A bihamiltonian structure is a pair of Hamiltonian structure $(P_0, P_1)$ satisfying an additional compatibility condition $[P_0, P_1] = 0$. Now assume that we have a bihamiltonian structure $(P_0,P_1)$ of hydrodynamic type \cite{liu2018lecture}, locally we represent them as follows:
\[
P_a = \frac 12\int\sum g^{ij}_a(u)\qth_i\qth_j^1+\Qg^{ij}_{a,k}(u)u^{k,1}\qth_i\qth_j,\quad a = 1,2.
\]
This bihamiltonian structure is called semisimple if the roots $\ql^1(u),\cdots,\ql^n(u)$ of the characteristic equation $\det(g_1^{ij}-\ql g_0^{ij}) = 0$ are distinct and not constant.

It is proved in \cite{ferapontov2001compatible} that if $(P_0, P_1)$ is semisimple, then the roots $\ql^1(u),\cdots,\ql^n(u)$ can serve as local coordinates of the manifold $M$ and in such a coordinate system $(P_0, P_1)$ can be represented in the following forms:
\[
P_0 = \frac 12\int \sum f^i(\ql)\qth_i\qth_i^1+ A^{ij}\qth_i\qth_j,\quad P_1 = \frac 12\int \sum g^i(\ql)\qth_i\qth_i^1+ B^{ij}\qth_i\qth_j,
\]
where $f^i$ are non-vanishing functions, $g^i = \ql^if^i$ and the functions $A^{ij}$ and $B^{ij}$ are given by
\[
A^{ij} = \frac 12\left(\frac{f^i}{f^j}\diff{f^j}{\ql^i}\ql^{j,1}-\frac{f^j}{f^i}\diff{f^i}{\ql^j}\ql^{i,1}\right),\quad B^{ij} = \frac 12\left(\frac{g^i}{f^j}\diff{f^j}{\ql^i}\ql^{j,1}-\frac{g^j}{f^i}\diff{f^i}{\ql^j}\ql^{i,1}\right).
\]
The coordinates $\ql^1,\cdots,\ql^n$ are called the canonical coordinates of $(P_0, P_1)$.

In what follows, when we consider semisimple bihamiltonian structures of hydrodynamic type, it is always assumed that we choose the canonical coordinates as local coordinates. We will use $u^1,\cdots,u^n$ instead of $\ql^1,\cdots,\ql^n$ to denote them. In these coordinates, we represent a bihamiltonian structure $(P_0, P_1)$ of hydrodynamic type as
\begin{equation}
\label{norm-p}
P_0 = \frac 12\int \sum f^i(u)\qth_i\qth_i^1+ A^{ij}\qth_i\qth_j,\quad P_1 = \frac 12\int \sum u^if^i(u)\qth_i\qth_i^1+ B^{ij}\qth_i\qth_j.
\end{equation}

Given a hydrodynamic bihamiltonian structure $(P_0,P_1)$, we construct the variational bihamiltonian cohomology as follows. Consider the space of 1-forms $\Qo$ of the infinite jet space $J^\infty(\hat M)$, locally it is an $\hm A$-module generated by $\qd u^{i,s}$ and $\qd\qth_i^s$ for $i = 1,\cdots, n$ and $s\geq 0$:
\[
\Qo = \biggl\{\sum_{i;s\geq 0}g_{i,s}\qd u^{i,s}+h^i_s\qd\qth_i^s\mid g_{i,s}, h^i_s\in\hm A\biggr\}.
\]
Each derivation of $\hm A$ induces an action on $\Qo$ by the Lie derivative (see Sect.\,\ref{vbc} for details). In particular we consider the action of Lie derivative of $\qp_x$ on $\Qo$, which we still denote by $\qp_x$, and we denote by $\bar\Qo$ the quotient space $\Qo/{\qp_x\Qo}$. We can verify that the actions on $\bar\Qo$ given by the Lie derivatives of $D_{P_0}$ and $D_{P_1}$, which we denote by $\tilde D_0$ and $\tilde D_1$ respectively, equip $\bar\Qo$ a structure of double complex. We grade the space $\bar\Qo$ similarly as we do for the space $\hm F$ by setting 
\[
\deg_x \delta u^{i,s} = \deg_x \delta\qth^{s}_i = s;\quad \deg_\qth \delta u^{i,s} = 0,\quad \deg_\qth \delta \qth_i^s = 1.
\]
Then we define the variational bihamiltonian cohomology groups as follows:
\[\vbh^p_d(\bar\Qo,\tilde D_0,\tilde D_1) = \frac{\bar\Qo_d^p\cap\ker \tilde D_0\cap\ker\tilde D_1}{\bar\Qo_d^p\cap\ima\tilde D_0\tilde D_1}.\]

We prove the following theorem in Sect.\! \ref{vbh23} and Sect.\! \ref{vsh}.
\begin{Th}
\label{vbh-res}
The variational bihamiltonian cohomology groups for a semisimple bihamiltonian structure $(P_0,P_1)$ of hydrodynamic type have the following properties:
\begin{enumerate}
\item[\rm{i)}] $\vbh^2_3(\bar\Qo,\tilde D_0,\tilde D_1)\cong\oplus_{i=1}^nC^\infty(\mathbb R);$
\item[\rm{ii)}] If $d\geq 2$ and if both $(p,d)$ and $(p+1,d)$ are NOT in the index set $I$, then $\vbh^p_d(\bar\Qo,\tilde D_0,\tilde D_1) = 0$, where the index set is defined by $I = I_1\cup I_2\cup I_3$ with
\begin{align*}
I_1 &= \{(i, j)\mid j = 0,1;\ i = j+1,\cdots,j+n+1\};\\
I_2 &= \{(i,j)\mid j = 2,\cdots,n;\ i = j,\cdots,j+n+1\};\\
I_3 &= \{(i,j)\mid j = n+1,n+2,n+3;\ i = j,\cdots,j+n\}.
\end{align*}
\item[\rm{iii)}] $\vbh^1_2(\bar\Qo,\tilde D_0,\tilde D_1) = 0.$
\end{enumerate}
\end{Th}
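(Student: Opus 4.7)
The plan is to build on the triviality of variational Hamiltonian cohomology proved in Section \ref{vbc} and adapt the spectral-sequence strategy developed by Liu--Zhang \cite{liu2013bihamiltonian} and Carlet--Posthuma--Shadrin \cite{carlet2018deformations} for the classical bihamiltonian cohomology, now applied to the double complex $(\bar\Omega,\tilde D_0,\tilde D_1)$. Throughout I would work in canonical coordinates $u^1,\ldots,u^n$, in which the bihamiltonian pair takes the diagonal form \eqref{norm-p}; at leading order the derivations $\tilde D_0$ and $\tilde D_1$ then split along the $n$ canonical directions, with the off-diagonal coupling confined to the lower-order tensors $A^{ij}$ and $B^{ij}$.

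First I would reduce the problem on the quotient $\bar\Omega=\Omega/\partial_x\Omega$ to a computation on $\Omega$ itself via the short exact sequence
$$0\to \partial_x\Omega \to \Omega \to \bar\Omega \to 0.$$
Since $\tilde D_0$ and $\tilde D_1$, being Lie derivatives, commute with $\partial_x$, this induces long exact sequences of cohomology groups through which vanishing statements transfer between $\Omega$ and $\bar\Omega$. Working on $\Omega$ is convenient because the actions of $\tilde D_a$ on the generators $\delta u^{i,s}$ and $\delta\theta_i^s$ can be written out explicitly from \eqref{def-d} together with the Cartan formula.

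For parts (ii) and (iii) I would employ a spectral sequence for the double complex, filtered say by $\deg_\qth$. The $E_1$-page is controlled by the $\tilde D_0$-cohomology of $\bar\Omega$, which by the triviality result of Section \ref{vbc} is supported only in a narrow band of bidegrees; a complementary spectral sequence with the opposite filtration yields analogous control. Combining the two constraints shows that whenever neither $(p,d)$ nor $(p+1,d)$ lies in the index set $I$, every entry that could contribute to $\vbh^p_d$ is forced to vanish. The low-degree case $\vbh^1_2$ in (iii) is small enough to be handled by a direct analysis of cocycles in $\bar\Omega^1_2$, making explicit use of the canonical-coordinate formulas for $\tilde D_0$ and $\tilde D_1$.

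The hard part is (i), the isomorphism $\vbh^2_3\cong\oplus_{i=1}^nC^\infty(\mathbb R)$, which is the analogue for variational 1-forms of the classical central-invariant classification of \cite{DLZ-1,liu2005deformations}. The lower bound is constructive: on the diagonal piece associated with the $i$-th canonical direction I would write down an explicit representative variational 1-form weighted by an arbitrary smooth function $c_i(u^i)$, and verify joint closedness under $\tilde D_0$ and $\tilde D_1$. The real challenge is surjectivity: every jointly closed class in bidegree $(2,3)$ must, after subtracting an element of $\ima\tilde D_0\tilde D_1$, be reducible to this canonical form. This requires a delicate induction on jet orders combined with an integration-by-parts normalization inside $\bar\Omega$, and the extra bookkeeping introduced by the 1-form generators $\delta u^{i,s}$ and $\delta\theta_i^s$ relative to the classical space $\hat{\mathcal F}$ is what I expect to make the combined arguments of Sections \ref{vbh23} and \ref{vsh} technically demanding.
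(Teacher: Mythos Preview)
Your outline for parts (i) and (iii) is broadly compatible with what the paper does (direct analysis in $\bar\Omega^1_2$, with the Salamander-type reduction $\vbh^2_3\cong \bar\Omega^1_2\cap\ker\tilde D_0\tilde D_1/(\ima\tilde D_0+\ima\tilde D_1)$ and explicit ``index'' invariants $ind_i(\omega)=(X^{(i)}_i+Y^{(i)}_i)/f^i$ playing the role of central invariants). But there is a genuine gap in your plan for (ii).

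You propose to run a spectral sequence on the double complex ``filtered say by $\deg_\theta$'', with $E_1$ given by $\tilde D_0$-cohomology. This does not work, for two reasons. First, $\vbh^p_d=(\ker\tilde D_0\cap\ker\tilde D_1)/\ima\tilde D_0\tilde D_1$ is \emph{not} the cohomology of the total complex of the bicomplex $(\bar\Omega,\tilde D_0,\tilde D_1)$, so the standard double-complex spectral sequence does not compute it. Second, both $\tilde D_0$ and $\tilde D_1$ raise $\deg_\theta$ by exactly $1$, so a $\deg_\theta$-filtration cannot separate them; and even if you could arrange an $E_1$-page equal to $H(\bar\Omega,\tilde D_0)$, the triviality proved in Section~\ref{vbc} says this vanishes for all $p,d>0$, which would force $\vbh\equiv 0$ and contradict (i).

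The missing idea, which is the heart of the approach in \cite{liu2013bihamiltonian,carlet2018deformations} and of the paper, is the pencil trick: one introduces a formal parameter $\lambda$ and passes to the genuine cochain complex $(\Omega[\lambda],\partial_\lambda=\tilde D_1-\lambda\tilde D_0)$, whose cohomology is isomorphic to $\vbh$ for $d\ge 2$ precisely because the single Hamiltonian cohomologies are trivial (Lemma~\ref{lem-lambda}). Only after this reformulation can one apply spectral sequences. The paper then filters $\Omega[\lambda]$ by $\deg_u+\deg_\theta$ (not $\deg_\theta$), so that the leading piece $\Delta_{-1}=\sum_i(-\lambda+u^i)f^i\,\delta\hat d_i$ decouples into de Rham--type differentials along the canonical directions; a cascade of further sub-filtrations (by $\deg_{u^i}$, then $\deg_{\theta^1}$, then $\deg_{\theta_i^1}$) reduces the computation to explicit finite-dimensional pieces, and the index set $I$ emerges from enumerating the bidegrees where these pieces fail to be acyclic. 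Without the $\lambda$-reformulation and the $u$-degree filtration, there is no visible route to the vanishing in (ii).
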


To illustrate the application of the variational bihamiltonian cohomology, in Sect.\! \ref{conformal} we study the conformal property of bihamiltonian structures which is shared by the bihamiltonian structures associated with Frobenius manifolds. Let us introduce the notations
\begin{align*}
&\mathrm{Der}(\hat{\mathcal A})^p:=\{X\in \mathrm{Der}(\hat{\mathcal A})\mid X(\hm A^k)\subset \hm A^{k+p}\},\\
&\mathrm{Der}(\hat{\mathcal A})^p_d:=\{X\in \mathrm{Der}(\hat{\mathcal A})\mid X(\hm A^k_l)\subset \hm A^{k+p}_{l+d}\}.
\end{align*}
Then the conformal property of a bihamiltonian structure can be described as the follows.
\begin{Def}
\label{def-conf}\label{def-2-1-zh}
A bihamiltonian structure $(P_0, P_1)$ is called conformal if there exists a nonzero derivation $E \in \mathrm{Der}(\hat{\mathcal A})^0$ and real numbers $\mu,\ql_0,\ql_1$ such that:
\begin{equation}\label{eq-def-2-1-zh}
\left[E,\qp_x\right] = \mu\qp_x,\quad \left[E, D_{P_a}\right] = \ql_aD_{P_a},\quad a = 0,1.
\end{equation}
\end{Def}

\begin{Ex}
The bihamiltonian structure of the dispersionless KdV hierarchy is given by 
\[
P_0 = \frac 12\int\qth\qth^1;\quad P_1 = \frac 12 \int u\qth\qth^1.
\]
One can check that it is a conformal bihamiltonian structure with $E$ given by
\[
E = \sum_{s\geq 0}(\ql_1-\ql_0+s\mu)u^{(s)}\diff{}{u^{(s)}}+(\ql_1+(s-1)\mu)\qth^s\diff{}{\qth^s}.
\]
\end{Ex}
\begin{Th}
\label{g0-conf}
A semisimple bihamiltonian structure $(P_0, P_1)$ of hydrodynamic type is conformal if and only if there exist real numbers $d^1,\cdots,d^n$ such that the functions $f^1,\cdots,f^n$ given in \eqref{norm-p} satisfy the following identities:
\begin{align}
\label{hom-f}
&\sum_j u^j\diff{f^i}{u^j} = d^i f^i,\quad \forall\ i;\\
\label{irre-f}
&(d^i-d^j)\diff{f^j}{u^i} = 0,\quad \forall\ i, j.
\end{align}
In such a case, if we require that the derivation $E$ has differential degree 0, then $E$ is an Euler type vector field  given by
\begin{equation}
\label{gen-e}
E = \sum_{i;s\geq 0}(\ql_1-\ql_0+s\mu)u^{i,s}\diff{}{u^{i,s}}+(\ql_1-(\ql_1-\ql_0)d^i+(s-1)\mu)\qth_i^s\diff{}{\qth_i^s}.
\end{equation}
\end{Th}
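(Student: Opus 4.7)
The plan is to prove both directions of the equivalence by testing the conformal relations on the generators $u^i,\qth_i$ and exploiting the differential grading.

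For sufficiency, I define $E$ by \eqref{gen-e}. The relation $[E,\qp_x]=\mu\qp_x$ is immediate since the scaling weights $\ql_1-\ql_0+s\mu$ on $u^{i,s}$ and $\ql_1-(\ql_1-\ql_0)d^i+(s-1)\mu$ on $\qth_i^s$ are both arithmetic in $s$ with common step $\mu$. The relations $[E,D_{P_a}]=\ql_a D_{P_a}$ need only be checked on the generators $u^i,\qth_i$; they unpack via \eqref{norm-p} and the explicit expressions of $A^{ij},B^{ij}$ in terms of $f^i$ into a finite set of polynomial identities. Matching the $\qth_i^1$-pieces reduces to \eqref{hom-f}, and matching the $u^{j,1}\qth_j$-pieces coming from the curvature $A^{ij}\qth_j$ reduces to \eqref{irre-f}; both are provided by hypothesis.

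For necessity, assume $E\in\mathrm{Der}(\hm A)^0$ satisfies \eqref{eq-def-2-1-zh}. Since $\qp_x$ and $D_{P_a}$ both have differential degree $1$, the decomposition $E=\sum_d E_d$ by differential degree makes each relation in \eqref{eq-def-2-1-zh} split degree-by-degree, and the component $E_0$ in degree $0$ satisfies all three with the same constants $\mu,\ql_0,\ql_1$. Hence we may assume $E\in\mathrm{Der}(\hm A)^0_0$, which is specified by the data $E(u^i)=V^i(u)$ and $E(\qth_i)=W^j_i(u)\qth_j$, with $[E,\qp_x]=\mu\qp_x$ propagating $E$ to all higher jets. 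Matching the coefficient of $\qth_j^1$ in $[E,D_{P_a}]u^i=\ql_a D_{P_a}u^i$ for $a=0,1$ and using the leading terms $f^i\qth_i^1$ of $\qd P_0/\qd\qth_i$ and $u^if^i\qth_i^1$ of $\qd P_1/\qd\qth_i$: the off-diagonal case $j\neq i$ forces $W^j_i=0$, where one uses the distinctness of the canonical coordinates $u^1,\ldots,u^n$ and the non-vanishing of $f^i$; the diagonal case $j=i$, compared between $a=0$ and $a=1$, forces $V^i=(\ql_1-\ql_0)u^i$; substituting back produces \eqref{hom-f} with $d^i=(\sum_k u^k\,\qp f^i/\qp u^k)/f^i$ and the $\qth$-weight $W^i_i=\ql_1-(\ql_1-\ql_0)d^i-\mu$. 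Matching next the coefficient of $u^{j,1}\qth_j$ for $j\neq i$, which receives the nontrivial contribution from $A^{ij}\qth_j$ in $\qd P_0/\qd\qth_i$, the identity collapses to $(\ql_1-\ql_0)(d^i-d^j)(\qp f^j/\qp u^i)=0$, which is exactly \eqref{irre-f}. The Euler form \eqref{gen-e} on higher jets then follows by applying $[E,\qp_x]=\mu\qp_x$ inductively.

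The main obstacle will be the careful bookkeeping of the curvature contributions: the variational derivatives of $A^{ij}\qth_i\qth_j$ and $B^{ij}\qth_i\qth_j$ inject $u^{k,1}$-dependent cross-terms into every identity, mixing indices, and reducing these to the clean form $(\ql_1-\ql_0)(d^i-d^j)(\qp f^j/\qp u^i)=0$ requires repeated use of the explicit formulas for $A^{ij},B^{ij}$ in terms of $f^i$ together with the already-established \eqref{hom-f}.
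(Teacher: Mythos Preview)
Your plan follows the paper's approach closely: reduce to differential degree~$0$, parametrize $E$ by its action on $u^i$ and $\qth_i$, propagate to higher jets via $[E,\qp_x]=\mu\qp_x$, and match coefficients in the relations on the generators. The off-diagonal $\qth_j^1$-matching and the determination of $V^i=(\ql_1-\ql_0)u^i$ are exactly as in the paper.

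There is, however, a genuine gap in the necessity direction. You only test the relation $[E,D_{P_a}]=\ql_a D_{P_a}$ on the generators $u^i$, and this is not enough to force the quantities $d^i:=\bigl(\sum_k u^k\,\qp_k f^i\bigr)/f^i$ to be \emph{real numbers} rather than functions of~$u$. At the stage where you write ``substituting back produces \eqref{hom-f}'', your $d^i$ are a priori only functions, so \eqref{hom-f} as stated (with constants $d^i$) has not yet been established. And without constancy, your $u^{j,1}\qth_j$-matching does \emph{not} collapse to $(\ql_1-\ql_0)(d^i-d^j)\,\qp_i f^j=0$: a direct computation of $E$ acting on $b_{ij}=\tfrac12 f^i\,\qp_i f^j/f^j$ gives
\[
(d^i-d^j)\,\qp_i f^j + f^j\,\qp_i d^j \;=\; 0,
\]
with an extra term $f^j\,\qp_i d^j$ that only vanishes once $d^j$ is known to be constant.

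The paper closes this gap by also evaluating the relation on $\qth_i$. From $[E,D_{P_0}]\qth_i=\ql_0 D_{P_0}\qth_i$, the coefficient of $\qth_i\qth_j^1$ (for $j\neq i$) yields an identity which, compared with the $u^{i,1}\qth_j$-coefficient of the $u^i$-relation, forces $\qp_j W^i_i=0$; similarly, the $\qth_i\qth_i^1$-coefficient, compared with the $\qth_i$-coefficient of the $u^i$-relation, forces $\qp_i W^i_i=0$. Hence $W^i_i$ is constant, so each $d^i$ is a real number; only then do \eqref{hom-f} and \eqref{irre-f} follow as stated. You should insert this step into your argument.
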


Given a semisimple and conformal bihamiltonian structure  of hydrodynamic type, we consider whether its deformations are still conformal. It turns out that there is only a certain family of deformations that preserve the conformal property, and the  equivalence classes of these deformations under Miura type transformations are parametrized by $n$ constants, as it is shown by the following theorem.
\begin{Th}
\label{g1-conf} 
Let $(P_0,P_1)$ be a semisimple and conformal bihamiltonian structure of hydrodynamic type, then its deformation $(\tilde P_0,\tilde P_1)$ is conformal if and only if the central invariants are given by:
\begin{equation}
\label{con-ci}
c_i(u^i) = C_i(u^i)^{m_i},\quad m_i = \frac{\ql_1-\ql_0-2\mu-(\ql_1-\ql_0)d^i}{\ql_1-\ql_0},
\end{equation}
where $C_i$ are arbitrary constants.
\end{Th}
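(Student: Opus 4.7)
The plan is to analyze the conformal condition on $(\tilde P_0,\tilde P_1)$ order by order in the dispersion parameter $\qe$, translating it into a differential equation for the central invariants $c_i(u^i)$.

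Recall that by the classification theorem for deformations of semisimple bihamiltonian structures of hydrodynamic type \cite{carlet2018deformations,DLZ-1,liu2005deformations}, the equivalence classes of deformations of $(P_0,P_1)$ under Miura type transformations are in bijection with the $n$-tuples $(c_1(u^1),\ldots,c_n(u^n))$ of central invariants. We therefore choose a representative
\[
\tilde P_a = P_a + \qe^2 P_a^{[2]} + \qe^4 P_a^{[4]} + \cdots
\]
in each equivalence class and analyze the conformal property on this representative.

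For the necessity direction, assume $(\tilde P_0,\tilde P_1)$ is conformal with Euler field $\tilde E$. Expand $\tilde E = E + \qe^2 E^{[2]} + \qe^4 E^{[4]} + \cdots$, where $E$ is the Euler field \eqref{gen-e} of the leading order structure provided by Theorem \ref{g0-conf}. At order $\qe^2$, the conformal relations $[\tilde E, D_{\tilde P_a}]=\ql_a D_{\tilde P_a}$ give
\[
[E, D_{P_a^{[2]}}] + [E^{[2]}, D_{P_a}] = \ql_a D_{P_a^{[2]}},\quad a=0,1.
\]
The second term on the left corresponds to an infinitesimal Miura adjustment and does not affect the central invariants. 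Using the explicit action of $E$ from \eqref{gen-e}, the canonical representation of $P_a^{[2]}$ in terms of $c_i(u^i)$, $f^i(u)$ and $u^if^i(u)$, together with the homogeneity relation \eqref{hom-f}, a direct computation of the $E$-weights on both sides of the above equation forces each $c_i(u^i)$ to satisfy the Euler type ODE
\[
(\ql_1-\ql_0)\,u^i\,\frac{dc_i}{du^i} = \bigl[\ql_1-\ql_0-2\mu-(\ql_1-\ql_0)d^i\bigr]\,c_i(u^i),
\]
whose general solution is $c_i(u^i) = C_i (u^i)^{m_i}$ with $m_i$ as in \eqref{con-ci}.

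For the sufficiency direction, assume the central invariants have the stated form, and start from any representative deformation having these central invariants. We inductively upgrade it to a conformal one: at order $\qe^{2k}$ with $k\geq 2$, the obstruction to enforcing the conformal relations to one additional order in $\qe$ lies in a variational bihamiltonian cohomology group $\vbh^p_d(\bar\Qo,\tilde D_0,\tilde D_1)$ with bidegree $(p,d)$ outside the nontrivial range identified in Theorem \ref{vbh-res}. Thus the obstruction vanishes, and one can adjust $P_a^{[2k]}$ within its Miura class and choose $E^{[2k]}$ so that the conformal condition holds to order $\qe^{2k}$. The main obstacle in the whole argument is the weight computation at order $\qe^2$: one must carefully track the $E$-weights of every ingredient in the canonical form of $P_a^{[2]}$, namely the functions $f^i$ (homogeneous of degree $d^i$ by \eqref{hom-f}), the factors $\qth_i^s$ with their $s$-dependent weights prescribed by \eqref{gen-e}, the numerical coefficients in the canonical form, and the implicit weight of $\qe^2$ needed to compensate the two additional factors of $\qp_x$. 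Assembling these contributions is what yields the precise coefficient $\ql_1-\ql_0-2\mu-(\ql_1-\ql_0)d^i$ on the right hand side of the ODE for $c_i$.
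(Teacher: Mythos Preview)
Your plan follows essentially the same route as the paper's proof in Section~\ref{e-deform}: fix a convenient Miura representative, expand the conformal equations \eqref{def-tildeE} order by order, and use the vanishing of the variational bihamiltonian cohomology $\vbh^2_{\geq 4}(\bar\Qo,\tilde D_0,\tilde D_1)$ to handle all orders beyond the second, reducing the whole question to the analysis at order $\qe^2$.

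There are two points where your sketch diverges from, or is less precise than, the paper's argument and which you should sharpen. First, the paper does \emph{not} adjust $P_a^{[2k]}$ at each step of the induction; it fixes once and for all the representative with $\tilde P_0=P_0$ and $P_1^{[1]}=0$ (so that $P_0^{[l]}=0$ for all $l\geq 1$) and then constructs only $E^{[l]}$ recursively. This is cleaner than your phrasing ``adjust $P_a^{[2k]}$ within its Miura class'', which would require checking that such adjustments do not disturb the equations already solved at lower orders. Second, your ``weight computation'' at order $\qe^2$ is exactly where the paper invokes the machinery built in Section~\ref{vbh23}: in the chosen gauge one has $\mathcal W_0^{[2]}=0$, and the paper rewrites $\mathcal W_1^{[2]}$ as $\tilde D_0\qb$ for an explicit $\qb\in\bar\Qo^1_2$ built from the derivation $T$ associated to the $1$-form $\mathcal T$ of Theorem~\ref{h23}. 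The solvability of $\tilde D_1\tilde D_0\qa^{[2]}=\tilde D_0\qb$ is then, by Lemma~\ref{s3-t8}, equivalent to the vanishing of the indices $ind_i(\qb)$ of Definition~\ref{ind-zh}. The paper computes
\[
ind_i(\qb)=3\bigl(\ql_1-\ql_0-2\mu-(\ql_1-\ql_0)d^i\bigr)c_i(u^i)-3E^{[0]}\bigl(c_i(u^i)\bigr),
\]
and setting this to zero is precisely your Euler-type ODE. So what you call the ``main obstacle'' is resolved in the paper not by an ad hoc weight count but by the index criterion for triviality in $\vbh^2_3(\bar\Qo,\tilde D_0,\tilde D_1)$, together with the fact that $ind_i(\mathcal T)=-3c_i(u^i)$; you should cite and use these results explicitly rather than leave the step as a heuristic.
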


\section{Variational Bihamiltonian Cohomology}\label{vbc}
In this section, we start with the definition of variational Hamiltonian cohomology for a single Hamiltonian structure of hydrodynamic type and prove its triviality. Then we define the variational bihamiltonian cohomology and make some necessary algebraic preparations for computing the cohomology groups.

\subsection{Variational forms on the infinite jet bundle.} We recall the constructions of the variational forms on the infinite jet bundle $J^\infty(\hat M)$ (cf. \cite{dubrovin2001normal} and reference therein). Let $\mathcal E$ be the space of differential forms on $J^\infty(\hat M)$, locally is just the space
\[
\mathcal E = \wedge^* \left(\mathrm{Span}_{\hm A}\left\{\qd\qth_i^s, \qd u^{i,s}\mid i = 1,\cdots,n,\, s\ge 0 \right\}\right).
\]
Here we adopt Deligne's sign rule for these variational forms, for example
\[
\qth_i^s\qd\qth_j^t = -\qd\qth_j^t\qth_i^s,\quad \qd\qth_i^s\qd\qth_j^t = \qd\qth_j^t\qd\qth_i^s,\quad \qd u^{i,s}\qd\qth_j^t = -\qd\qth_j^t\qd u^{i,s},\quad \qd u^{i,s}\qd u^{j,t} = -\qd u^{j,t}\qd u^{i,s}.
\]

Any vector field $X\in\der$ induces an action on $\mathcal E$ by the Lie derivative, which is defined by the Cartan formula
\[\mathcal L_X = \qd\comp\iota_X+\iota_X\comp\qd,\]
where the de Rham differential is given by
\[
\qd = \sum_{i;s\geq 0}\qd u^{i,s}\diff{}{u^{i,s}}+\qd\qth_i^s\diff{}{\qth_i^s}.
\]
In particular, the vector field $\qp_x$ defined in \eqref{dx-zh} induces an action on $\mathcal E$, which we still denote by $\qp_x$. We define the space of local functionals of forms by
\[\bar{\mathcal E} = \mathcal E/{\qp_x \mathcal E},\] 
and represent its elements in the form $\int f$ with $f\in\mathcal E$.
In addition, if a vector field $X\in\der$ commutes with $\qp_x$, then its Lie derivative $\mathcal L_X$ also commutes with $\qp_x$ acting on $\mathcal E$, hence it induces an action on $\bm E$ which we still denote by $\mathcal L_X$.

\begin{Ex}
\label{1-form}
The space of 1-forms is given by
\[
\mathcal E^1 = \biggl\{\sum_{i;s\geq 0}g_{i,s}\qd u^{i,s}+h^i_s\qd\qth_i^s\mid g_{i,s}, h^i_s\in\hm A\biggr\}.
\]
Let us grade $\mathcal E^1$ as we do for $\hm A$ as follows:
\begin{align*}
(\mathcal E^1)_d = \biggl\{\sum_{i;s\geq 0}g_{i,s}\qd u^{i,s}+h^i_s\qd\qth_i^s\mid g_{i,s}, h^i_s\in\hm A_{d-s}\biggr\},\\
(\mathcal E^1)^p = \biggl\{\sum_{i;s\geq 0}g_{i,s}\qd u^{i,s}+h^i_s\qd\qth_i^s\mid g_{i,s}\in\hm A^p, h^i_s\in\hm A^{p-1}\biggr\}.
\end{align*}
For any element $\qo\in\bm E^1$, we can uniquely represent it in the form
\[
\qo = \int \sum_i g_i\qd u^i+h^i\qd\qth_i,\quad g_i,h^i\in\hm A.
\]
So we can identify the space $\bm E^1$ with the space of $\hm A$-valued differential 1-forms on $\hat M$.
\end{Ex}
\begin{Ex}\label{ex-3-2-zh}
Let $X\in\der^q$ and $\qo\in(\mathcal E^1)^p$ be given by
\[\qo = \sum_{i;s\geq 0}g_{i,s}\qd u^{i,s}+h^i_s\qd\qth_i^s,\quad g_{i,s}\in\hm A^p,\ h^i_s\in\hm A^{p-1},\]
then the action of $\mathcal L_X$ on $\qo$ has the expression
\[\mathcal L_X\qo = \sum_{i;s\geq 0}X(g_{i,s})\qd u^{i,s}+(-1)^{pq}g_{i,s}\qd \left(X(u^{i,s})\right)+X(h^i_s)\qd\qth_i^s+(-1)^{(p-1)q}h^i_s\qd \left(X(\qth_i^s)\right).\]
\end{Ex}

Let us consider the space $\mathrm{Der}(\hm A)^{\qp}$ of derivations on $\hm A$ which commute with $\qp_x$. Take an element $X\in \mathrm{Der}(\hm A)^{\qp}$, since $[X,\ \qp_x] = 0$  we can regard $X$ as an $\hm A$-valued vector field on $\hat M$ instead of on $J^\infty(\hat M)$. Note that $\hat M$ admits a canonical symplectic structure
\[\varpi = \sum_i \qd u^i\wedge\qd\qth_i,\]
hence the space of the $\hm A$-valued vector fields on $\hat M$ is canonically identified with the space of $\hm A$-valued differential 1-forms on $\hat M$, which is the same as the space $\bm E^1$ as we explained in the Example \ref{1-form}. Let us write down this identification explicitly. An element $X$ of $\mathrm{Der}(\hm A)^{\qp}$ can be represented as
\[X = \sum_{i;s\geq 0}\qp_x^s\left(X(u^i)\right)\diff{}{u^{i,s}}+\qp_x^s\left(X\left(\qth_i\right)\right)\diff{}{\qth^{s}_i}.\]
Restricting it on $\hat M$, we get the following $\hm A$-valued vector field on $\hat M$ which we still denote by $X$:
\[X = \sum_i X(u^i)\diff{}{u^i}+X(\qth_i)\diff{}{\qth_i}.\]
Using the canonical symplectic structure $\varpi$, we identify $X$ with the 1-form
\[W = \iota_X \varpi = \sum_i X(u^i)\qd\qth_i-X(\qth_i)\qd u^i,\]
which corresponds to a unique element $\qo\in\bm E^1$ given by
\[\qo = \int  \sum_i X(u^i)\qd\qth_i-X(\qth_i)\qd u^i.\]
We note that for an element $X \in (\mathrm{Der}(\hm A)^{\qp})^p_d$, the corresponding element $\qo$ belongs to $(\bm E^1)^{p+1}_d$. Let us denote this correspondence by
\begin{equation}
\label{s3-t1}
\Phi: \mathrm{Der}(\hm A)^{\qp}\to \bm E^1;\quad \Phi: X\mapsto \int \iota_\varpi \left(X|_{\hat M}\right).
\end{equation}

Now let $P$ be a Hamiltonian structure of hydrodynamic type. By using the identity \eqref{comm-d}, we see that the space $\mathrm{Der}(\hm A)^{\qp}$ becomes a cochain complex with the differential given by adjoint action of $D_P$. The question is that when we identify elements in $\mathrm{Der}(\hm A)^{\qp}$ as  elements in $\bm E^1$, then how the action of $D_P$ induces a differential on $\bm E^1$?

\begin{Lem}
Let $P$ be a Hamiltonian structure of hydrodynamic type, then the following identity holds true for any $X \in \mathrm{Der}(\hm A)^{\qp}$:
\begin{equation}
\label{s3-t2}
\Phi\left([D_P,\ X]\right) = \mathcal L_{D_P}\Phi(X).
\end{equation}
\end{Lem}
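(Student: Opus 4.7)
The plan is to view $\Phi(X)$ as $\int \iota_X\varpi$ for the canonical symplectic form $\varpi = \sum_i \qd u^i \qd\qth_i$ on $\hat M$, and to derive the identity from Cartan calculus combined with the fact that $D_P$ is the Hamiltonian vector field of the local functional $P$ with respect to $\varpi$.

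The first step is to establish $\mathcal L_{D_P}\varpi \in \qp_x\mathcal E^2$. Since $\varpi$ has constant coefficients, $\qd\varpi = 0$, so Cartan's magic formula reduces this to showing $\qd\iota_{D_P}\varpi \in \qp_x\mathcal E^2$. Unwinding \eqref{def-d} with $p=2$ one gets, up to Deligne signs,
\[
\iota_{D_P}\varpi = \sum_i \vard{P}{\qth_i}\,\qd\qth_i - \vard{P}{u^i}\,\qd u^i,
\]
and an integration-by-parts computation shows that this differs from $\pm\qd P$ by an element of $\qp_x\mathcal E^1$. Applying $\qd$ then gives $\pm \qd^2 P$, which lies in $\qp_x\mathcal E^2$ since $\qd^2 = 0$. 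This step is the variational-calculus analogue of the classical fact that the Hamiltonian vector field of a function is an infinitesimal symplectomorphism.

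The second step uses the super Cartan commutator identity, which for $X \in \mathrm{Der}(\hm A)^p$ reads
\[
\mathcal L_{D_P}\,\iota_X - (-1)^{p}\,\iota_X\,\mathcal L_{D_P} = \iota_{[D_P, X]}.
\]
Applying it to $\varpi$ and rearranging gives $\mathcal L_{D_P}\iota_X\varpi = \iota_{[D_P,X]}\varpi + (-1)^{p}\iota_X \mathcal L_{D_P}\varpi$. Because $[X,\qp_x]=0$ by assumption, the contraction $\iota_X$ commutes with $\mathcal L_{\qp_x}=\qp_x$ on $\mathcal E$, so $\iota_X$ sends $\qp_x$-exact forms to $\qp_x$-exact forms; combined with the first step, this places the correction term in $\qp_x\mathcal E^1$. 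Integrating both sides then yields $\mathcal L_{D_P}\Phi(X) = \Phi([D_P, X])$ in $\bar{\mathcal E}^1$, which is the claim.

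The main obstacle I foresee is the bookkeeping of super signs under Deligne's convention: the generators $\qd u^i$ and $\qd\qth_i$ carry different parities, and both $D_P$ (super degree $1$) and $X$ (super degree $p$) introduce signs in the graded Leibniz rule for $\iota$ and in the super Cartan identity. A secondary delicacy is to confirm with the correct relative sign that $\iota_{D_P}\varpi \equiv \pm\qd P \pmod{\qp_x\mathcal E^1}$, so that the appeal to $\qd^2 = 0$ really forces $\mathcal L_{D_P}\varpi$ into $\qp_x\mathcal E^2$ rather than into some larger variationally trivial subspace of $\mathcal E^2$.
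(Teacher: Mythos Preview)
Your argument is correct and genuinely different from the paper's. The paper proves the identity by passing to flat coordinates $(v^\alpha,\phi_\alpha)$ in which $P=\tfrac12\int\eta^{\alpha\beta}\phi_\alpha\phi_\beta^1$ with constant $\eta$, and then simply writes out both $\Phi([D_P,X])$ and $\mathcal L_{D_P}\Phi(X)$ term by term and observes they coincide; no Cartan calculus or invariance of $\varpi$ is invoked. Your route is the conceptual symplectic one: show $\mathcal L_{D_P}\varpi\in\qp_x\mathcal E^2$ (the variational analogue of ``Hamiltonian flows are symplectomorphisms''), then use the graded identity $[\mathcal L_{D_P},\iota_X]=\iota_{[D_P,X]}$ together with $[X,\qp_x]=0$ to conclude. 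This is more illuminating and makes clear that the lemma expresses nothing more than the $D_P$-invariance of $\varpi$ modulo $\qp_x$; it also generalises immediately to any $D_P$ that preserves $\varpi$ variationally. The paper's direct computation, by contrast, sidesteps all the Deligne-sign bookkeeping you flag as the main hazard, at the cost of relying on the existence of flat coordinates for hydrodynamic Hamiltonian structures. Both the claim $\iota_{D_P}\varpi\equiv\pm\qd p\pmod{\qp_x\mathcal E^1}$ and the graded Cartan commutator require care with parities, but the computations are routine once the conventions are fixed; if you want a quick sanity check, note that in flat coordinates $\mathcal L_{D_P}\varpi=\eta^{\alpha\beta}\qd\phi_\beta^1\,\qd\phi_\alpha=\tfrac12\qp_x(\eta^{\alpha\beta}\qd\phi_\beta\,\qd\phi_\alpha)$, confirming your first step.
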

\begin{proof}
Since both the definition \eqref{s3-t1} and the identity \eqref{s3-t2} are coordinate free, we can choose a system of local coordinates $(v^1,\cdots, v^n; \phi_1,\cdots,\phi_n)$ on $\hat M$ such that the Hamiltonian structure $P$ has the expression \cite{dubrovin1996hamiltonian}
\[P = \frac 12\int\eta^{\qa\qb}\phi_\qa\phi_\qb^1,\]
here $(\eta^{\qa\qb})$ is a constant non-degenerate matrix, and summation over repeated lower and upper Greek indices is assumed. We call such coordinates the flat coordinates of $P$.

Now we assume that $X$ is an element of $\mathrm{Der}(\hm A)^{\qp}$ with super degree $p$, which means that $X(\hm A^q)\subset \hm A^{p+q}$, then it is straightforward to show that
\begin{equation*}[D_P, X]v^\qa = D_P\left(X(v^\qa)\right)+(-1)^{p+1}\eta^{\qa\qb}(X(\phi_\qb^1)),\quad [D_P, X]\phi_\qa = D_P(X(\phi_\qa)).\end{equation*}
Then we arrive at
\[
\Phi\left([D_P, X]\right) = \int\left(D_P\left(X(v^\qa)\right)+(-1)^{p+1}\eta^{\qa\qb}(X(\phi_\qb^1))\right)\qd\phi_\qa-D_P(X(\phi_\qa))\qd v^\qa.
\]
On the other hand we have
\begin{align*}
\mathcal L_{D_P}\Phi(X) &= \mathcal L_{D_P}\int X(v^\qa)\qd\phi_\qa-X(\phi_\qa)\qd v^\qa\\
&=\int D_P\left(X(v^\qa)\right)\qd\phi_\qa-D_P(X(\phi_\qa))\qd v^\qa-(-1)^{p+1}X(\phi_\qa)\qd\left(\eta^{\qa\qb}\phi_\qb^1\right)\\
&= \int\left(D_P\left(X(v^\qa)\right)+(-1)^{p+1}\eta^{\qa\qb}(X(\phi_\qb^1))\right)\qd\phi_\qa-D_P(X(\phi_\qa))\qd v^\qa.
\end{align*}
Therefore we prove the lemma.
\end{proof}
\subsection{Variational Hamiltonian cohomology and its triviality}
From now on we will use $\Qo$ to denote the space of 1-froms $\mathcal E^1$ and $\bar\Qo$ to denote its quotient space $\bar{\mathcal E}^1$. The space of homogeneous elements with differential degree $d$ and super degree $p$ will be denoted by $\Qo^p_d$ and $\bar\Qo^p_d$ respectively.

Let $P$ be a Hamiltonian structure of hydrodynamic type, we will use $\tilde D_P$ to denote the action $\mathcal L_{D_P}$ on the space $\Qo$ and $\bar\Qo$. By using the identity \eqref{s3-t2} we conclude that $\tilde D_P\comp\tilde D_P = 0$, so $\tilde D_P$ is a differential on the spaces $\Qo$ and $\bar\Qo$.
\begin{Def}
The variational Hamiltonian cohomology of $\Qo$ (and of $\bar \Qo$, respectively) is defined to be the cohomology of the complex $(\Qo^\bullet,\tilde D_P)$ (and of $(\bar\Qo^\bullet,\tilde D_P), respectively$) given by
\[
H^p_d(\Qo,\tilde D_P) = \frac{\Qo^p_d\cap\ker\tilde D_P}{\Qo^p_d\cap\ima\tilde D_P};\quad H^p_d(\bar\Qo,\tilde D_P) = \frac{\bar\Qo^p_d\cap\ker\tilde D_P}{\bar\Qo^p_d\cap\ima\tilde D_P}.
\]
\end{Def} 

By using the fundamental facts of the homological algebra we have the following lemma.
\begin{Lem}
\label{s3-t3}
The short exact sequence
\xym{
   0\ar[r]&\Qo\ar[r]^{\qp_x}&\Qo\ar[r]^{\pi}&\bar\Qo\ar[r] & 0}
induces the following long exact sequence of the cohomology groups for $d\geq 1$:
\[
\cdots \to H^p_d(\Qo,\tilde D_P)\to H^p_d(\bar\Qo,\tilde D_P)\to H^{p+1}_d(\Qo,\tilde D_P)\to\cdots.
\]
\end{Lem}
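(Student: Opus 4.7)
The plan is to read Lemma~\ref{s3-t3} as a direct instance of the zig-zag lemma for short exact sequences of cochain complexes, so that the proof reduces to two routine verifications plus the standard homological machinery.

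First, I would check that $0 \to \Qo \xrightarrow{\qp_x} \Qo \xrightarrow{\pi} \bar\Qo \to 0$ is a short exact sequence in each bidegree. Surjectivity of $\pi$ and exactness at the middle are immediate from the definition $\bar\Qo = \Qo/\qp_x\Qo$. For the injectivity of $\qp_x$, given $\qo = \sum_{i,s}(g_{i,s}\,\qd u^{i,s} + h^i_s\,\qd\qth_i^s) \in \Qo^p_{d-1}$ with $\qp_x\qo = 0$, let $s_{\max}$ denote the largest index for which some coefficient is nonzero; then in $\qp_x\qo$ the basis elements $\qd u^{i,s_{\max}+1}$ and $\qd\qth_i^{s_{\max}+1}$ appear only with coefficients $g_{i,s_{\max}}$ and $h^i_{s_{\max}}$, forcing these to vanish and contradicting the choice of $s_{\max}$. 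The hypothesis $d \ge 1$ enters precisely here: the graded sequence $0 \to \Qo^p_{d-1} \to \Qo^p_d \to \bar\Qo^p_d \to 0$ only makes sense once $d-1 \ge 0$.

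Next I would check compatibility with the differential $\tilde D_P$. The identity $[D_P, \qp_x] = 0$ is recorded immediately after the definition \eqref{def-d}, and Lie derivatives satisfy $\mathcal L_{[X,Y]} = [\mathcal L_X, \mathcal L_Y]$ in the graded sense, so $[\tilde D_P, \qp_x] = 0$, showing that $\qp_x$ is a morphism of cochain complexes. Compatibility of $\pi$ is automatic: since $\tilde D_P$ commutes with $\qp_x$ it preserves $\qp_x\Qo$ and descends to $\bar\Qo$ in such a way that $\pi\circ\tilde D_P = \tilde D_P\circ\pi$.

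With these two facts in hand, the zig-zag lemma produces the long exact sequence. The connecting homomorphism $\delta\colon H^p_d(\bar\Qo,\tilde D_P) \to H^{p+1}_d(\Qo,\tilde D_P)$ is constructed in the standard way: represent a class by $\bar\qo = \pi(\qo)$ with $\qo \in \Qo^p_d$, observe that $\tilde D_P\qo$ lies in $\ker\pi = \ima\,\qp_x$ because $\tilde D_P\bar\qo = 0$, and set $\delta[\bar\qo] = [\qo']$ for the unique preimage $\qo' \in \Qo^{p+1}_d$ under the injection $\qp_x$. I do not anticipate any substantial obstacle; the entire argument is bookkeeping. The lemma is included because, paired with the forthcoming triviality of the variational Hamiltonian cohomology on $\Qo$, it will be the main device for transferring vanishing statements from $\Qo$ to $\bar\Qo$.
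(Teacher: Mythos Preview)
Your proposal is correct and is precisely what the paper has in mind: the authors simply state that the lemma follows ``by using the fundamental facts of the homological algebra'' and add a remark that $\qp_x$ is injective on $\Qo$ (unlike on $\hm A$, where it has kernel $\mathbb R$). Your explicit verification of injectivity via the top-degree argument, together with the compatibility check $[\tilde D_P,\qp_x]=0$, supplies exactly the details they omit; there is nothing more to it.
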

\begin{Rem}
On the space $\hm A$, the map $\qp_x$ has the kernel $\mathbb R$, however on the space $\Qo$ the map $\qp_x$ is injective.
\end{Rem}

\begin{Th}[Triviality of the variational Hamiltonian cohomology]
We have 
\[H^p_d(\Qo,\tilde D_P) = 0\]
for $p>0,\ d>0$.
\end{Th}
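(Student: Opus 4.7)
The plan is to construct an explicit contracting homotopy for $\tilde D_P$ on the subspace of $\Omega$ of positive differential degree. Pick flat coordinates $(v^\alpha,\phi_\alpha)$ for $P$ (as in the proof of the preceding lemma), in which $P=\tfrac12\int\eta^{\alpha\beta}\phi_\alpha\phi_\beta^1$ with $\eta^{\alpha\beta}$ a constant nondegenerate symmetric matrix; a direct calculation gives
\[
D_P=\sum_{s\geq 0}\eta^{\alpha\beta}\phi_\beta^{s+1}\frac{\partial}{\partial v^{\alpha,s}},
\]
so $D_P$ annihilates every $\phi_\alpha^s$. Introduce the auxiliary odd derivation
\[
Q=\sum_{s\geq 0}\eta_{\alpha\beta}v^{\alpha,s}\frac{\partial}{\partial\phi_\beta^{s+1}}
\]
of bidegree $(-1,-1)$. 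Evaluating the graded commutator $[D_P,Q]=D_PQ+QD_P$ on generators shows that $[D_P,Q]=N$, where $N$ is the even derivation determined by $N(v^{\alpha,s})=v^{\alpha,s}$ for $s\ge 0$, $N(\phi_\alpha^s)=\phi_\alpha^s$ for $s\ge 1$ and $N(\phi_\alpha^0)=0$; the same kind of check yields $[N,D_P]=0$ and $[N,Q]=0$. Applying the standard Lie derivative identity $[\mathcal L_X,\mathcal L_Y]=\mathcal L_{[X,Y]}$ (with super signs, since $D_P$ and $Q$ are odd) then gives
\[
\tilde D_P\,\mathcal L_Q+\mathcal L_Q\,\tilde D_P=\mathcal L_N,\qquad[\tilde D_P,\mathcal L_N]=0.
\]

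The next and most delicate step is to invert $\mathcal L_N$ on $\Omega^p_d$ for every $d>0$. Decompose a general element into pieces of the form $f(v^{,0})\,\phi_I^0\,M'\cdot Y$, where $M'$ is a polynomial monomial in the higher jet variables $v^{,\ge 1}$ and $\phi^{\ge 1}$, $\phi_I^0$ is a product of the $\phi_\alpha^0$'s, and $Y\in\{dv^{\alpha,s},d\phi_\alpha^s\}$. Using the formula in Example \ref{ex-3-2-zh}, $\mathcal L_N$ preserves each such piece and acts on the free parameter $f$ as $E+c$, where $E=\sum_\alpha v^{\alpha,0}\,\partial/\partial v^{\alpha,0}$ is the Euler operator on the fiber and $c=(\text{polynomial degree of }M')+\varepsilon(Y)$, with $\varepsilon(dv^{\alpha,s})=\varepsilon(d\phi_\alpha^{s\ge 1})=1$ and $\varepsilon(d\phi_\alpha^0)=0$. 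The hypothesis $d>0$ forces $c\ge 1$: if $Y\ne d\phi_\alpha^0$ this is immediate, and if $Y=d\phi_\alpha^0$ then $\deg_xM'=d\ge 1$ forces $M'$ to contain at least one factor. Locally (on a star-shaped chart) $E+c$ is invertible on smooth functions for any $c\ge 1$ via the standard integral formula $(E+c)^{-1}g(v)=\int_0^1 t^{c-1}g(tv)\,dt$, so $\mathcal L_N^{-1}$ is well-defined on $\Omega^p_d$ for $d>0$.

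Finally, set $K:=\mathcal L_Q\circ\mathcal L_N^{-1}\colon\Omega^p_d\to\Omega^{p-1}_{d-1}$ for $d>0$. Since $\mathcal L_N^{-1}$ commutes with $\tilde D_P$,
\[
\tilde D_PK+K\tilde D_P=(\tilde D_P\mathcal L_Q+\mathcal L_Q\tilde D_P)\,\mathcal L_N^{-1}=\mathcal L_N\,\mathcal L_N^{-1}=\mathrm{id},
\]
and any $\tilde D_P$-closed $\omega\in\Omega^p_d$ with $p>0$, $d>0$ is therefore exact with primitive $K\omega$, yielding $H^p_d(\Omega,\tilde D_P)=0$. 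The main subtlety of this plan is the invertibility of $\mathcal L_N$ in the case $Y=d\phi_\alpha^0$: here the needed positive eigenvalue comes not from the generator $Y$ but from the jet-monomial factor in the coefficient, which is forced to be nontrivial precisely by the assumption $d>0$.
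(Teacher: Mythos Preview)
Your argument is correct and takes a genuinely different route from the paper. The paper's proof writes out the cocycle condition $\tilde D_P\omega=0$ explicitly in flat coordinates, observes that it forces $f_{\alpha,s}=(-1)^{p+1}\eta_{\alpha\beta}D_P(g^\beta_{s+1})$, uses this to peel off an explicit $\tilde D_P$-exact piece leaving only the remainder $g^\alpha_0\,\delta\phi_\alpha$, and then invokes the already-known triviality of $H^{p-1}_d(\hat{\mathcal A},D_P)$ to write $g^\alpha_0=D_P(h^\alpha_0)$. In other words, the paper does a bare-hands reduction on the $1$-form part and defers the remaining work to a cited black box on~$\hat{\mathcal A}$.

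You instead construct a single contracting homotopy $K=\mathcal L_Q\,\mathcal L_N^{-1}$ on all of $\Omega^p_d$, which packages both steps at once: the $\mathcal L_Q$ piece plays the role of the explicit primitive the paper writes down, and your inversion of $E+c$ on smooth coefficients is exactly the mechanism hidden inside the cited triviality result for $\hat{\mathcal A}$. Your proof is thus more self-contained and more structural, at the price of having to invert the Euler operator explicitly (whence your star-shaped chart hypothesis, which is harmless since the statement is local and one may always translate the origin of the flat coordinates without disturbing $D_P$). The paper's approach is shorter only because it quotes that result; conceptually the two arguments rest on the same homotopy.
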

\begin{proof}
We choose locally a system of flat coordinates $(v^1,\cdots, v^n; \phi_1,\cdots,\phi_n)$ on $\hat M$ such that $P = \frac 12\int \eta^{\qa\qb}\phi_\qa\phi_\qb^1$. Then for a 1-form
\begin{equation*}
\qo = \sum_{s\geq 0} f_{\qa,s}\qd v^{\qa,s}+g^\qa_s\qd\phi_\qa^s\in\Qo^p_d
\end{equation*}
we have
\begin{equation*}
\tilde D_P\qo = \sum_{s\geq 0} D_P(f_{\qa,s})\qd v^{\qa,s}+(-1)^p\eta^{\qa\qb}f_{\qa,s}\qd\phi_\qb^{s+1}+D_P(g^\qa_s)\qd\phi_\qa^s.
\end{equation*}
If $\qo\in\ker \tilde D_P$, then we see that
\begin{equation*}
D_P(g^\qa_0)= 0;\quad D_P(g_{s+1}^\qa)+(-1)^p\eta^{\qa\qb}f_{\qb,s} = 0,\ s\geq 0,
\end{equation*}
so we can write $\qo$ in the form
\begin{equation*}
\qo = \tilde D_P \left(\sum_{s\geq 0}(-1)^{p-1}\eta_{\qa\qb}g^\qa_{s+1}\qd v^{\qb,s}\right)+ g^\qa_0\qd\phi_\qa.
\end{equation*}
For $g^\qa_0\in\hm A^{p-1}_d$, by using the triviality of the Hamiltonian cohomology (see, for example \cite{liu2018lecture,liu2011jacobi}) we can represent it as $g^\qa_0 = D_P(h^\qa_0)$. Thus the cocycle $\qo$ must also be a coboundary. The theorem is proved.
\end{proof}
From Lemma \ref{s3-t3} we have the following corollary.
\begin{Cor}
We have $H^p_d(\bar\Qo,\tilde D_P) = 0$ for $p>0,\ d>0$.
\end{Cor}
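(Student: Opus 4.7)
The corollary is an immediate consequence of the two results that precede it, so my plan is simply to splice them together via the long exact sequence of Lemma \ref{s3-t3}.

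First I would write down the relevant piece of the long exact sequence at bidegree $(p,d)$ with $p>0$ and $d>0$:
\[
H^p_d(\Qo,\tilde D_P)\ \to\ H^p_d(\bar\Qo,\tilde D_P)\ \to\ H^{p+1}_d(\Qo,\tilde D_P).
\]
The previous theorem gives the triviality $H^q_d(\Qo,\tilde D_P) = 0$ whenever $q>0$ and $d>0$. Applying this with $q=p$ and with $q=p+1$ (both exponents are strictly positive because $p>0$), the two outer terms vanish.

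By exactness at the middle term, the map into $H^p_d(\bar\Qo,\tilde D_P)$ is surjective while the map out of it is injective, and both adjacent groups are zero, so $H^p_d(\bar\Qo,\tilde D_P) = 0$. There is no genuine obstacle here: the only thing to watch is the hypothesis $d\ge 1$ under which Lemma \ref{s3-t3} is stated, and this is exactly the range $d>0$ covered by the corollary.
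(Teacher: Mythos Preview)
Your argument is correct and is exactly the approach the paper takes: the corollary is stated immediately after the triviality theorem with the one-line justification ``From Lemma \ref{s3-t3} we have the following corollary,'' and you have simply written out the implicit long-exact-sequence step. There is nothing to add or change.
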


\subsection{Definition of the variational bihamiltonian cohomology}
We proceed to define the variational bihamiltonian cohomology and discuss its relations with the bihamiltonian cohomology. Let $(P_0,P_1)$ be a semisimple bihamiltonian structure of hydrodynamic type and $u^1,\cdots,u^n$ be its canonical coordinates. We will use $\tilde D_0$ and $\tilde D_1$ to denote $\tilde D_{P_0}$ and $\tilde D_{P_1}$ respectively. By using the identity \eqref{s3-t2} we have $\tilde D_i\tilde D_j+\tilde D_j\tilde D_i = 0$ for $i,j = 0,1$.
\begin{Def}
The variational bihamiltonian cohomology for $(P_0,P_1)$ is defined to be the following groups:
\begin{align}
\vbh^p_d(\Qo,\tilde D_0,\tilde D_1) = \frac{\Qo_d^p\cap\ker \tilde D_0\cap\ker\tilde D_1}{\Qo_d^p\cap\ima\tilde D_0\tilde D_1};\\ 
\vbh^p_d(\bar\Qo,\tilde D_0,\tilde D_1) = \frac{\bar\Qo_d^p\cap\ker \tilde D_0\cap\ker\tilde D_1}{\bar\Qo_d^p\cap\ima\tilde D_0\tilde D_1}.
\end{align}
\end{Def}

The following lemmas are important for computing the cohomology groups.
\begin{Lem}
\label{lem-lambda}
The cohomology groups of the cochain complex $\Qo[\ql] = \Qo\otimes \mathbb R[\ql]$ with differential $\qp_\ql = \tilde D_1-\ql\tilde D_0$ is isomorphic to the variational bihamiltonian cohomology, i.e.
\begin{equation}
{H}^p_d(\Qo[\ql],\qp_\ql)\cong \vbh^p_d(\Qo,\tilde D_0,\tilde D_1),\quad d\geq 2.
\end{equation}
Similarly, we have the following isomorphisms for the corresponding quotient spaces:
\begin{equation}
{H}^p_d(\bar\Qo[\ql],\qp_\ql)\cong \vbh^p_d(\bar\Qo,\tilde D_0,\tilde D_1),\quad d\geq 2.
\end{equation}
\end{Lem}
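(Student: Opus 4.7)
The plan is to use the triviality of the variational Hamiltonian cohomology established in the previous subsection to reduce the polynomial degree in $\ql$ of any $\qp_\ql$-cocycle, and thereby identify such cocycles with elements of $\vbh^p_d$.

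First I would construct a natural map $\Phi:\vbh^p_d(\Qo,\tilde D_0,\tilde D_1)\to H^p_d(\Qo[\ql],\qp_\ql)$ that sends a class $[\qo]$ to the class of $\qo$ regarded as a polynomial of degree zero in $\ql$. Well-definedness is straightforward: if $\tilde D_0\qo=\tilde D_1\qo=0$ then $\qp_\ql\qo=\tilde D_1\qo-\ql\tilde D_0\qo=0$, and if $\qo=\tilde D_0\tilde D_1\eta$ then $\tilde D_0^2=0$ together with the anti-commutation $\tilde D_0\tilde D_1+\tilde D_1\tilde D_0=0$ give $\qo=-\tilde D_1\tilde D_0\eta=\qp_\ql(-\tilde D_0\eta)$, so the image of such a coboundary is itself a $\qp_\ql$-coboundary.

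Next I would show $\Phi$ is bijective by a descending induction on the polynomial degree $N$ in $\ql$. Starting from a $\qp_\ql$-cocycle $\qo=\sum_{k=0}^N\qo_k\ql^k$, the cocycle relations unravel to $\tilde D_1\qo_0=0$, $\tilde D_1\qo_{k+1}=\tilde D_0\qo_k$ for $0\le k\le N-1$, and $\tilde D_0\qo_N=0$. For $N\ge 1$, the triviality theorem for $(\Qo,\tilde D_0)$ produces $\eta$ with $\qo_N=\tilde D_0\eta$, and $\qo+\qp_\ql(\eta\ql^{N-1})$ is an equivalent cocycle of polynomial degree at most $N-1$ whose new $\ql^{N-1}$-coefficient $\qo_{N-1}+\tilde D_1\eta$ remains $\tilde D_0$-closed by the same anti-commutation. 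Iterating down to $N=0$ yields a representative in $\vbh^p_d$, proving surjectivity of $\Phi$. Injectivity is analogous: if $\qo\in\Qo^p_d$ is closed under both $\tilde D_0,\tilde D_1$ and $\qo=\qp_\ql\mu$ with $\mu=\sum_{k=0}^N\mu_k\ql^k$, the same triviality argument, applied from the top of $\mu$, recursively produces $\nu_k$ satisfying $\mu_k=\tilde D_0\nu_k-\tilde D_1\nu_{k+1}$ (with $\nu_{N+1}=0$), whence $\qo=\tilde D_1\mu_0=-\tilde D_0\tilde D_1\nu_0\in\ima(\tilde D_0\tilde D_1)$. The statement for $\bar\Qo[\ql]$ is handled by the identical argument, using instead the Corollary on triviality of $H^p_d(\bar\Qo,\tilde D_P)$.

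The principal obstacle is the careful bookkeeping of differential and super degrees: the triviality theorem requires strictly positive differential and super degree, so one must track the bidegrees of each $\qo_k$, $\eta$, $\mu_k$, and $\nu_k$ through every step of the induction. The hypothesis $d\ge 2$ is precisely what ensures that all intermediate objects live in differential degree at least $1$, so that the triviality theorem remains applicable throughout the reduction. Once the degree accounting is verified, the remainder of the argument is a routine but attentive check that the modified cocycle continues to satisfy the chain of relations $\tilde D_1\qo'_{k+1}=\tilde D_0\qo'_k$ required to restart the induction at the next lower level.
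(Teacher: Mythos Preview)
Your proposal is correct and follows exactly the approach the paper alludes to: the paper's own proof consists only of the sentence ``The lemma follows from the triviality of the variational Hamiltonian cohomology; for details, one may refer to the proof of Lemma~4.4 of \cite{liu2013bihamiltonian},'' and the argument in that reference is precisely the degree-in-$\lambda$ reduction you describe. One minor remark: your discussion of degree bookkeeping mentions only the differential degree, but the super degree also needs a word---for $p=0,1$ the intermediate objects fall into $\Omega^{-1}=0$ or into $\Omega^0$ where $\ker\tilde D_0=0$ (as one sees in flat coordinates), so these edge cases cause no trouble.
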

\begin{proof}
The lemma follows from the triviality of the variational hamiltonian cohomology. For details, one may refer to the proof of Lemma 4.4 of \cite{liu2013bihamiltonian}.
\end{proof}

\begin{Lem}[Salamander lemma]
\label{sal}
 We have the following isomorphism induced by $\tilde D_0$ for $p>0, d>0$:
\begin{align*}
\tilde D_0:\quad \frac{\bar\Qo^p_d\cap\ker \tilde D_1\tilde D_0}{\bar\Qo^p_d\cap(\ima\tilde D_0+\ima\tilde D_1)}\xrightarrow[]{\cong} \vbh^{p+1}_{d+1}(\bar\Qo,\tilde D_0,\tilde D_1).
\end{align*}
\end{Lem}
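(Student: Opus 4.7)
The proposal is to realize this as a standard homological-algebra argument for double complexes (the ``salamander'' calculation), driven by the vanishing of the variational Hamiltonian cohomology established in the preceding subsection. Concretely, I plan to define the map by $[\omega]\mapsto [\tilde D_0\omega]$, check well-definedness, then prove surjectivity and injectivity each by a one-line appeal to the triviality of $H^\bullet_\bullet(\bar\Qo,\tilde D_0)$.

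First I would verify the map is well-defined. Given $\omega\in\bar\Qo^p_d\cap\ker\tilde D_1\tilde D_0$, the element $\tilde D_0\omega$ lives in $\bar\Qo^{p+1}_{d+1}$, is killed by $\tilde D_0$ (because $\tilde D_0^2=0$), and is killed by $\tilde D_1$ by the defining hypothesis $\tilde D_1\tilde D_0\omega=0$; hence it represents a class in $\vbh^{p+1}_{d+1}(\bar\Qo,\tilde D_0,\tilde D_1)$. If $\omega=\tilde D_0\alpha+\tilde D_1\beta$ for some $\alpha,\beta$, then using $\tilde D_0^2=0$ and $\tilde D_0\tilde D_1=-\tilde D_1\tilde D_0$, we get $\tilde D_0\omega=\tilde D_0\tilde D_1\beta\in\ima\tilde D_0\tilde D_1$, so the class of $\tilde D_0\omega$ in $\vbh^{p+1}_{d+1}$ vanishes. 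Thus the assignment descends to the quotient.

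For surjectivity, take any cocycle $\eta\in\bar\Qo^{p+1}_{d+1}\cap\ker\tilde D_0\cap\ker\tilde D_1$. Since $p+1>0$ and $d+1>0$, the corollary to the triviality theorem gives $H^{p+1}_{d+1}(\bar\Qo,\tilde D_0)=0$, so $\eta=\tilde D_0\omega$ for some $\omega\in\bar\Qo^p_d$. Then $\tilde D_1\tilde D_0\omega=\tilde D_1\eta=0$, so $\omega$ lies in the source and visibly maps to $[\eta]$. For injectivity, suppose $\omega\in\bar\Qo^p_d\cap\ker\tilde D_1\tilde D_0$ satisfies $\tilde D_0\omega=\tilde D_0\tilde D_1\alpha$ for some $\alpha$. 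Then $\tilde D_0(\omega-\tilde D_1\alpha)=0$, and since $p>0$, $d>0$, triviality of the variational Hamiltonian cohomology at bidegree $(p,d)$ produces $\beta$ with $\omega-\tilde D_1\alpha=\tilde D_0\beta$. This exhibits $\omega$ as an element of $\ima\tilde D_0+\ima\tilde D_1$, so its class in the source is zero.

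There is no real obstacle in this argument; the entire content is the triviality of $H^\bullet_\bullet(\bar\Qo,\tilde D_P)$, which has already been established, together with the graded-commutation $\tilde D_0\tilde D_1+\tilde D_1\tilde D_0=0$ coming from \eqref{s3-t2}. The only mild subtlety is being careful with the range of $(p,d)$: both the surjectivity step and the injectivity step apply the vanishing result only in strictly positive bidegrees, which is consistent with the hypothesis $p>0$, $d>0$ in the statement of the lemma.
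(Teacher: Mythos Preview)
Your proposal is correct and is precisely the approach the paper has in mind: the paper's proof consists of the single sentence ``The lemma follows easily from the triviality of the variational Hamiltonian cohomology,'' and what you have written is the standard unpacking of that sentence.
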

\begin{proof}
The lemma follows easily from the triviality of the variational Hamiltonian cohomology.
\end{proof}

The definition of the variational bihamiltonian cohomology is comparable with the definition of bihamiltonian cohomology. On the space $\hm A$ there are differentials $D_0$ and $D_1$ which are defined in \eqref{def-d} by the bihamiltonian structure $(P_0, P_1)$, and on the space $\hm F$ there are the induced differential which are denoted by $d_0$ and $d_1$. The bihamiltonian cohomology is then given by
\begin{align*}
\bh^p_d(\hm A,D_0,D_1) = \frac{\hm A_d^p\cap\ker  D_0\cap\ker D_1}{\hm A_d^p\cap\ima D_0 D_1}\cong H^p_d(\hm A[\ql],D_1-\ql D_0);\\ \bh^p_d(\hm F,d_0,d_1) = \frac{\hm F_d^p\cap\ker d_0\cap\ker d_1}{\hm F_d^p\cap\ima d_0d_1}\cong H^p_d(\hm F[\ql],d_1-\ql d_0).
\end{align*}

There is a natural cochain map 
\begin{equation}
\label{bhvbh}
\qd:\quad \hm F[\ql]\to \bar\Qo[\ql]
\end{equation}
between the complexes $\hm A[\ql]$ and $\Qo[\ql]$ given by the de Rham differential $\qd$.
This map can be further illustrated as follows: We first note that the space $\hm F$ can be identified with 
\[
\mathrm{Der}(\hm A)^D:=\{X\in\mathrm{Der}(\hm A)\mid \exists\ Y\in\hm F, X = D_Y\}
\]
by using the map \eqref{def-d}. By applying the identity \ref{comm-d}, we see that the differentials $d_0, d_1$ on $\hm F$ induce differentials $\mathrm{ad}_{D_0}, \mathrm{ad}_{D_1}$ on $\mathrm{Der}(\hm A)^D$. Since it is obvious that $\mathrm{Der}(\hm A)^D\subseteq \mathrm{Der}(\hm A)^\qp$, the identity \eqref{s3-t2} shows that the map \eqref{bhvbh} can be viewed as a natural embedding (with a change of signs, see the remark below):
\[i:\mathrm{Der}(\hm A[\ql])^D\hookrightarrow \mathrm{Der}(\hm A[\ql])^\qp.\]
Therefore we conclude that the bihamiltonian cohomology can be viewed as the cohomology on the space $\mathrm{Der}(\hm A[\ql])^D$, while the variational bihamiltonian cohomology is the cohomology on the space $\mathrm{Der}(\hm A[\ql])^\qp$. In this sense, the bihamiltonian cohomology is just a restriction of the variational bihamiltonian cohomology onto a subcomplex.

\begin{Rem}
The map $\qd: \hm F\to\bar \Qo$ can be viewed as a generalization of the correspondence between the hamiltonian functions and the hamiltonian vector field on a finite dimensional symplectic manifold. In our case, this correspondence is twisted by a sign:
\[
\qd X = (-1)^{p-1}\Phi(D_X),\quad X\in\hm F^p.
\] 
\end{Rem}

\section{The cohomology group $\vbh^2_3$}
\label{vbh23}

In this section, we compute the bihamiltonian cohomology group $\vbh^2_3(\bar\Qo,\tilde D_0,\tilde D_1)$. The computation of other cohomology groups will be covered in the next section. We fix a semisimple bihamiltonian $(P_0,P_1)$ of hydrodynamic type and work in the canonical coordinates such that the bihamiltonian structure is given by \eqref{norm-p}.
Note that if we define a derivation
\begin{align*}
D(g^1,\cdots,g^n) = &\sum_{s\geq 0;i}\qp_x^s(g^i\qth_i^1)\diff{}{u^{i,s}}
\\&+\frac{1}{2}\sum_{s\geq 0;i,j}\qp_x^s\left(\qp_jg^iu^{j,1}\qth_i+g^i\frac{\qp_ig^j}{g^j}u^{j,1}\qth_j-g^j\frac{\qp_jg^i}{g^i}u^{i,1}\qth_j\right)\diff{}{u^{i,s}}\\
&+\frac{1}{2}\sum_{s\geq 0;i,j}\qp_x^s\left(\qp_ig^j\qth_j\qth_j^1+g^j\frac{\qp_jg^i}{g^i}\qth_i\qth_j^1-g^j\frac{\qp_jg^i}{g^i}\qth_j\qth_i^1\right)\diff{}{\qth_i^s}\\
&+\frac{1}{2}\sum_{s\geq 0;i,j,k}\qp_x^s\left(\qp_i\left(g^k\frac{\qp_kg^j}{g^j}\right)u^{j,1}\qth_k\qth_j-\qp_j\left(g^k\frac{\qp_kg^i}{g^i}\right)u^{j,1}\qth_k\qth_i\right)\diff{}{\qth_i^s},
\end{align*}
for a set of functions $g^1,\cdots,g^n$, then we know that
\[
D_{P_0} = D(f^1,\dots,f^n),\quad D_{P_1} = D(u^1f^1,\dots,u^nf^n).
\]
Here and henceforth we will use $\qp_i$ to denote $\diff{}{u^i}$. 

From Lemma \ref{sal} it follows that in order to compute the cohomology group $\vbh^2_3(\bar\Qo,\tilde D_0,\tilde D_1)$ we only need to consider the spaces 
\[\mathcal Z:= \bar\Qo^1_2\cap \ker(\tilde D_0\tilde D_1),\quad
\mathcal B:= \bar\Qo^1_2\cap(\ima\tilde D_0+\ima\tilde D_1).\]
For an element $\qo\in\bar \Qo^1_2$, we can represent it uniquely in the form
\begin{equation}
\label{s3-t4}
\qo = \int \sum_i X^i\qd u^i+Y^i\qd\qth_i,\quad X^i\in\hm A^1_2,\quad Y^i\in\hm A^0_2,
\end{equation}
where the differential polynomials can be written as
\begin{align}
\label{s3-t5}
X^i &= \sum_j X^{(i)}_{j}\qth_j^2+\sum_{j,k}\left(X^{(i)}_{kj}u^{j,1}\qth_k^1+Z^{(i)}_{jk}u^{k,2}\qth_j\right)+\sum_{j,k,l}Z^{(i)}_{j;kl}u^{k,1}u^{l,1}\qth_j;\\
\label{s3-t6}
Y^i &= \sum_j Y^{(i)}_ju^{j,2}+\sum_{j,k}Y^{(i)}_{jk}u^{j,1}u^{k,1}.
\end{align}
\begin{Def}\label{ind-zh}
For an element $\qo\in\bar\Qo^1_2$ which is represented in the form \eqref{s3-t4}--\eqref{s3-t6}, the indices $ind_i(\qo)$ for $i = 1,\cdots, n$ with respect to a semisimple bihamiltonian structure $(P_0,P_1)$ of hydrodynamic type are defined to be the functions
\[
ind_i(\qo):=\frac{1}{f^i}(X^{(i)}_i+Y^{(i)}_i).
\]
\end{Def}
We will see later that the indices defined above are generalizations of the central invariants of deformations of bihamiltonian structures of hydrodynamic type. To compute the cohomology group $\vbh^2_3(\bar\Qo)\cong \mathcal Z/\mathcal B$, we need the following two lemmas.
\begin{Lem}
\label{s3-t7}
For any given cocycle $\qo\in\mathcal Z$, the index $ind_i(\qo)$ is a function of single variable $u^i$ for any $i=1,\cdots,n$.
\end{Lem}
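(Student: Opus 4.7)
The plan is to derive the claim directly from the cocycle condition $\tilde D_0 \tilde D_1 \qo \equiv 0 \pmod{\qp_x \Qo}$, by extracting appropriate coefficients of this equation in $\bar\Qo^3_4$. Using the Lie derivative formula of Example \ref{ex-3-2-zh}, together with the explicit form of the derivation $D(g^1,\dots,g^n)$ specialized to $D_{P_0}=D(f^1,\dots,f^n)$ and $D_{P_1}=D(u^1 f^1,\dots,u^n f^n)$, one can in principle expand $\tilde D_0 \tilde D_1 \qo$ term by term for $\qo$ given by \eqref{s3-t4}--\eqref{s3-t6}.

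Rather than carrying out the full expansion, I would focus on a single judiciously chosen test monomial whose coefficient directly probes $\qp_j(X^{(i)}_i+Y^{(i)}_i)$ for a fixed pair $j\neq i$. A natural candidate is $\qth_i\qth_j\qth_j^1\,\qd u^{i,1}$ (or an integration-by-parts equivalent): in the canonical form \eqref{norm-p} of the bihamiltonian structure, a factor $\qth_i\qth_j\qth_j^1$ can arise in $\tilde D_0\tilde D_1\qo$ only through the off-diagonal pieces $A^{ij}, B^{ij}$ of the derivations, acting in combination with the $u^j$-derivative of the coefficients $X^{(i)}_i$ and $Y^{(i)}_i$ that sit in front of $\qth_i^2$ and $u^{i,2}$ in \eqref{s3-t5}--\eqref{s3-t6}. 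After reduction modulo $\qp_x$, the coefficient of this test monomial is expected to collapse to the form $c_{ij}(u)(u^i-u^j)\,\qp_j ind_i(\qo)$ with $c_{ij}$ a nowhere-zero product of the $f^k$; since $(P_0,P_1)$ is semisimple we have $u^i\neq u^j$ and $f^k\neq 0$, so this forces $\qp_j ind_i(\qo)=0$, which is exactly the statement of the lemma.

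The main obstacle is the bookkeeping. The space $\bar\Qo^3_4$ has many independent monomial types, and Deligne's sign rule combined with the $(-1)^{pq}$ factors in Example \ref{ex-3-2-zh} easily produce sign errors. To keep things tractable I would restrict attention from the start to those summands of $\qo$ that can possibly produce the chosen test monomial (essentially only $X^{(i)}_i\qth_i^2\,\qd u^i$ and $Y^{(i)}_i u^{i,2}\,\qd \qth_i$), and to those pieces of $D_{P_0}, D_{P_1}$ that carry a single $\qth_j$ or $\qth_j^1$ with $j\neq i$. The remaining components of $\qo$ (the $X^{(i)}_{kj}, Z^{(i)}_{jk}, Z^{(i)}_{j;kl}, Y^{(i)}_{jk}$) would be handled by a separate check, showing that they contribute only terms that can be absorbed into $\qp_x\Qo$ or into monomials orthogonal to the chosen test one. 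Semisimplicity and the non-vanishing of the $f^i$ are what make the final division legitimate and yield the conclusion $\qp_j ind_i(\qo)=0$ for all $j\neq i$.
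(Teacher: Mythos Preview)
Your approach is genuinely different from the paper's, and as written it has a real gap.

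The paper does \emph{not} work directly with the condition $\tilde D_0\tilde D_1\qo=0$ in $\bar\Qo^3_4$. Instead it first uses Part~1 of Lemma~\ref{s3-t8} (indices vanish on coboundaries) together with Lemma~\ref{s3-t10} to replace $\qo$ by its normal form, which kills $Y^{(i)}_j$, $X^{(i)}_j$ for $j\neq i$, and $Y^{(i)}_{ii}$. Then it invokes the triviality of the variational Hamiltonian cohomology to write $\tilde D_0\qo=\tilde D_1\qa$ for a uniquely normalized $\qa\in\bar\Qo^1_2$, and compares coefficients of this \emph{second-order} identity in $\bar\Qo^2_3$. The auxiliary coefficients of $\qa$ (the $P^{(i)}_{\bullet}$, $W^{(i)}_{\bullet}$, $Q^{(i)}_{\bullet}$) are what make the system close: one obtains $X^{(i)}_i=u^iP^{(i)}_i$ from one coefficient, and then two further equations \eqref{cond-p5} and \eqref{cond-p8-3} combine to give $\qp_j\bigl(P^{(i)}_i/f^i\bigr)=0$. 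No single coefficient yields the conclusion by itself; the proof genuinely needs to eliminate $X^{(i)}_{ji}$ and $P^{(i)}_{ji}$ between equations.

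This is exactly where your proposal is optimistic. You assert that a single test monomial in $\bar\Qo^3_4$ will have coefficient $c_{ij}(u)(u^i-u^j)\,\qp_j\,ind_i(\qo)$, with the contributions from $X^{(i)}_{kj}$, $Z^{(i)}_{jk}$, $Z^{(i)}_{j;kl}$, $Y^{(i)}_{jk}$ either landing in $\qp_x\Qo$ or in orthogonal monomials. There is no reason to expect this: already in the paper's much-reduced normal-form setting, the relevant equation \eqref{cond-p5} carries an $X^{(i)}_{ji}$-term that only cancels after invoking a separate relation. Without the normal-form reduction and without the factorization through $\qa$, the contamination from the other coefficients is worse, not better, and you have given no mechanism to remove it. (As a minor aside, your suggested monomial $\qth_i\qth_j\qth_j^1\,\qd u^{i,1}$ has differential degree~$2$, not~$4$, so it does not even live in $\bar\Qo^3_4$; but this is a symptom of the bookkeeping difficulty you yourself flag, not the main issue.)

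If you want to salvage a direct approach, you would at minimum need to (i) pass to the normal form first, and (ii) identify a \emph{pair} of coefficient equations in $\tilde D_0\tilde D_1\qo=0$ whose combination eliminates the stray $X^{(i)}_{ji}$-type terms. At that point you are essentially redoing the paper's computation one level higher in $\bar\Qo$, with more terms; the introduction of $\qa$ is precisely the device that lowers the degree and makes the linear algebra tractable.
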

\begin{Lem}
\label{s3-t8}
A cocycle $\qo\in\mathcal Z$ is a coboundary, i.e. $\qo\in\mathcal B$, if and only if  
\[ind_i(\qo) = 0,\quad i = 1,\cdots,n.\]
\end{Lem}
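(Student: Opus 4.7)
The proof splits into two directions. For the easy direction, $\qo \in \mathcal{B} \Rightarrow \mathrm{ind}_i(\qo)=0$, the plan is a direct computation on general coboundaries. Taking $\qa \in \bar\Qo^0_1$, I would expand $\tilde D_0 \qa$ and $\tilde D_1 \qa$ by combining the Cartan formula $\mathcal L_X = \qd \iota_X + \iota_X \qd$ with the explicit derivation $D(g^1,\dots,g^n)$ (setting $g^i = f^i$ and $g^i = u^i f^i$ respectively), then extract the coefficient of $\qth_i^2$ in the $\qd u^i$-component and the coefficient of $u^{i,2}$ in the $\qd\qth_i$-component. Using integration by parts to pass to $\bar\Qo$, I would verify that the combination $X^{(i)}_i + Y^{(i)}_i$ is a total $\qp_x$-derivative. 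The diagonal structure $f^i \qth_i \qth_i^1$ and $u^i f^i \qth_i \qth_i^1$ of $P_0, P_1$ in canonical coordinates is precisely what forces the cancellation between the two diagonal contributions.

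For the harder direction, assume $\mathrm{ind}_i(\qo)=0$ for all $i$. The plan is a cascade of coboundary subtractions. First, I would use images of elements of $\bar\Qo^0_1$ under $\tilde D_0$ and $\tilde D_1$ to kill all off-diagonal contributions, i.e.\ the portions of $X^{(i)}_j,\, Y^{(i)}_j,\, Z^{(i)}_{jk},\, Z^{(i)}_{j;kl},\, X^{(i)}_{jk},\, Y^{(i)}_{jk}$ that carry at least one index different from $i$. Rewriting the cocycle equation $\tilde D_0 \tilde D_1 \qo = 0$ through the $\ql$-pencil $\qp_\ql = \tilde D_1 - \ql \tilde D_0$ of Lemma \ref{lem-lambda} turns the off-diagonal solvability problem into a family of linear equations whose coefficient matrices (after factoring out $f^i$) are multiplication by $u^i - u^j$, hence invertible by the semisimplicity hypothesis. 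After these subtractions, $\qo$ is in purely diagonal form, and Lemma \ref{s3-t7} tells us that the only surviving diagonal datum on the $i$-th component is encoded by a single function $f^i \cdot \mathrm{ind}_i(\qo)$ of $u^i$ alone; further diagonal coboundaries remove the remaining ancillary diagonal pieces $Z^{(i)}_{ii}, Z^{(i)}_{i;ii}, Y^{(i)}_{ii}, X^{(i)}_{ii}$ that are not protected by the index, and the hypothesis $\mathrm{ind}_i(\qo)=0$ closes the argument.

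The main obstacle will be the bookkeeping: six families of coefficients $X^{(i)}_j, X^{(i)}_{jk}, Z^{(i)}_{jk}, Z^{(i)}_{j;kl}, Y^{(i)}_j, Y^{(i)}_{jk}$ must be tracked simultaneously with the coefficients of the candidate primitives, while ensuring that a subtraction used to eliminate one family does not reintroduce a previously killed one. To keep the cascade under control, I would stratify the coefficients by the number of order-one jet factors $u^{\bullet,1}$ and $\qth_\bullet^1$ they carry and arrange the reduction triangularly, so that at each stratum the inversions required are rational in the canonical coordinates with only admissible $(u^i - u^j)^{-1}$ poles. The $\ql$-pencil viewpoint of Lemma \ref{lem-lambda} is convenient because it replaces the pair $(\tilde D_0, \tilde D_1)$ by the single differential $\qp_\ql$, so the solvability conditions at each stratum reduce to residue-type computations in $\ql$ whose obstructions are exactly the $n$ indices $\mathrm{ind}_i(\qo)$.
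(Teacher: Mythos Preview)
Your easy direction is fine and matches the paper: for $\qa=\int\sum_{i,j}\qa^{(i)}_j u^{j,1}\qd u^i$, the coefficient of $\qth_i^2$ in the $\qd u^i$-part of $\tilde D_0\qa$ is $\qa^{(i)}_i f^i$ and the coefficient of $u^{i,2}$ in the $\qd\qth_i$-part is $-\qa^{(i)}_i f^i$, so the index vanishes identically (not merely up to a total derivative). The same happens for $\tilde D_1$.

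Your hard direction has a genuine gap. You propose to kill \emph{all} off-diagonal coefficients $X^{(i)}_j,\, X^{(i)}_{kj},\, Z^{(i)}_{jk},\, Z^{(i)}_{j;kl},\, Y^{(i)}_j,\, Y^{(i)}_{jk}$ by subtracting elements of $\ima\tilde D_0+\ima\tilde D_1$. But the primitives live in $\bar\Qo^0_1$, which carries only $n^2$ functional parameters; using both $\tilde D_0$ and $\tilde D_1$ gives $2n^2$, whereas the off-diagonal coefficients you want to remove are $O(n^3)$ or $O(n^4)$ many. The $(u^i-u^j)$-invertibility you invoke is exactly what the paper uses in Lemma~\ref{s3-t10}, and that lemma already exhausts the coboundary freedom: the resulting normal form still carries all of $X^{(i)}_{kj}$, $Z^{(i)}_{jk}$, $Z^{(i)}_{j;kl}$ and the off-diagonal $Y^{(i)}_{jk}$. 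So there is no ``purely diagonal'' form reachable by coboundary subtraction, and your appeal to the $\ql$-pencil does not manufacture extra primitives.

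What the paper does instead is the step your outline is missing. After putting $\qo$ in normal form, it uses triviality of the variational Hamiltonian cohomology to produce an auxiliary $\qa\in\bar\Qo^1_2$ (same space as $\qo$, not $\bar\Qo^0_1$) with $\tilde D_0\qo=\tilde D_1\qa$, and fixes $\qa$ uniquely by an analogous normal form. The cocycle condition is then exploited not as a solvability criterion for coboundary equations but as a large overdetermined linear system obtained by comparing coefficients of $\qth_j^3$, $u^{k,3}\qth_j$, $\qth_k^1\qth_j^2$, $u^{j,2}\qth_k^1$, $u^{k,1}\qth_j\qth_i^2$, etc.\ in the identity $\tilde D_0\qo=\tilde D_1\qa$. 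These comparisons produce relations such as $X^{(i)}_i=u^iP^{(i)}_i$, $Z^{(i)}_{jk}=u^iW^{(i)}_{jk}$, $X^{(i)}_{kj}=u^iP^{(i)}_{kj}$, which together with $ind_i(\qo)=0$ start a cascade forcing first $X^{(i)}_i=P^{(i)}_i=0$ and then, step by step, every remaining coefficient of both $\qo$ and $\qa$ to vanish. The auxiliary $\qa$ is the engine of the argument; without it you cannot close the system.
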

\begin{Th}
\label{h23}
The quotient space $\mathcal Z/\mathcal B\cong\oplus_{i=1}^nC^\infty(\mathbb R)$.
\end{Th}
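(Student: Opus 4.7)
The plan is to obtain the theorem as a direct synthesis of Lemmas \ref{s3-t7} and \ref{s3-t8}, supplemented by a surjectivity argument via explicit construction.

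First I would assemble the linear map
\[\mathrm{ind}\colon \mathcal Z \longrightarrow \bigoplus_{i=1}^n C^\infty(\mathbb R),\qquad \omega\longmapsto \bigl(ind_1(\omega), \ldots, ind_n(\omega)\bigr).\]
Linearity is immediate from Definition \ref{ind-zh}, since each $ind_i(\omega)$ depends linearly on the coefficients $X^{(i)}_i$ and $Y^{(i)}_i$ extracted from the normal form \eqref{s3-t4}--\eqref{s3-t6} of $\omega$. Lemma \ref{s3-t7} guarantees that the image indeed lies in the subspace of $n$-tuples of functions of a single variable, and Lemma \ref{s3-t8} identifies the kernel with $\mathcal B$. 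Together these produce an injective linear map
\[\overline{\mathrm{ind}}\colon \mathcal Z / \mathcal B \hookrightarrow \bigoplus_{i=1}^n C^\infty(\mathbb R).\]

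What remains is to prove that $\overline{\mathrm{ind}}$ is surjective. By linearity in the parameters, I would reduce this to producing, for each fixed $i\in\{1,\ldots,n\}$ and each $c\in C^\infty(\mathbb R)$, a cocycle $\omega^{(i)}_c \in \mathcal Z$ with $ind_i(\omega^{(i)}_c) = c(u^i)$ and $ind_j(\omega^{(i)}_c) = 0$ for $j\neq i$; the sum $\sum_i \omega^{(i)}_{c_i}$ will then realize an arbitrary prescribed tuple. A natural ansatz is supported at the $i$-th component, with leading piece of the form
\[\frac{1}{2}\int c(u^i) f^i(u)\bigl(\theta_i^2\,\delta u^i + u^{i,2}\,\delta \theta_i\bigr),\]
which by inspection already yields the required indices, plus correction terms bilinear in the first-order jets $u^{j,1},\theta_k^1$ whose role is to enforce the cocycle condition.

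The main obstacle, and the only substantive computation, is verifying that such corrections can indeed be chosen so that $\tilde D_0 \tilde D_1 \omega^{(i)}_c = 0$. I would carry this out by direct calculation, using the explicit formula for the derivation $D(g^1,\ldots,g^n)$ recalled at the start of this section (applied with $g^k=f^k$ and $g^k=u^kf^k$ respectively) together with the Lie derivative formula of Example \ref{ex-3-2-zh}. The combinatorial analysis should closely parallel that used in the proof of Lemma \ref{s3-t8}: exploiting the canonical-coordinate structure of $(P_0,P_1)$, one expects every potential obstruction in $\tilde D_0 \tilde D_1$ applied to the leading piece to be either forced to vanish or absorbable into a suitably chosen correction. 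Once $\omega^{(i)}_c$ is produced, summing over $i$ establishes surjectivity of $\overline{\mathrm{ind}}$ and hence completes the proof of the isomorphism $\mathcal Z/\mathcal B \cong \bigoplus_{i=1}^n C^\infty(\mathbb R)$.
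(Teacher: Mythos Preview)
Your overall architecture matches the paper's exactly: define the index map, invoke Lemma~\ref{s3-t7} for the target and Lemma~\ref{s3-t8} for the kernel, then exhibit explicit cocycles realizing arbitrary indices. The only substantive divergence is in the surjectivity step, and there your proposal has a genuine gap.

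You propose to start from the leading piece $\tfrac{1}{2}\int c(u^i) f^i\bigl(\theta_i^2\,\delta u^i + u^{i,2}\,\delta \theta_i\bigr)$ and then solve for lower-order corrections making the result a $\tilde D_0\tilde D_1$-cocycle, carrying this out ``by direct calculation.'' But you do not perform that calculation, and there is no a~priori reason the obstructions must be absorbable: the expectation that ``every potential obstruction\ldots be either forced to vanish or absorbable'' is precisely the surjectivity statement you are trying to prove. The analysis in the proof of Lemma~\ref{s3-t8} that you cite as a model goes in the opposite direction (showing that a cocycle with vanishing index must itself vanish in normal form) and does not by itself furnish the existence of corrections.

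The paper sidesteps this entirely by writing down
\[
\tau = \int \delta\Bigl(D_1\sum_i c_i(u^i)u^{i,1}\log u^{i,1}-D_0\sum_i u^ic_i(u^i)u^{i,1}\log u^{i,1}\Bigr),
\]
borrowed from the theory of central invariants. Because $\tau=\delta F$ with $F=D_1A-D_0B$, and because $\tilde D_a\circ\delta=\delta\circ D_a$, one gets $\tilde D_0\tilde D_1\tau=\delta\bigl(D_0D_1D_1A-D_0D_1D_0B\bigr)=0$ for free from $D_a^2=0$ and $D_0D_1+D_1D_0=0$; no correction-chasing is needed. The only thing to check is that $\tau$, despite the logarithms in the generating expression, actually lands in $\bar\Omega^1_2$ with differential-polynomial coefficients (which it does, as noted in the paper with reference to \cite{liu2005deformations}), and that $ind_i(\tau)=-3c_i(u^i)$ by a short computation. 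This is what your ansatz is missing: a structural reason, rather than a promised brute-force verification, that a cocycle with prescribed index exists.
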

\begin{proof}
Given $n$ functions of single variable $c_1(u^1),c_2(u^2),\cdots,c_n(u^n)$, we construct an element $\qt\in\mathcal Z$ as follows:
\[
\qt = \int \qd\biggl(D_1\sum_i c_i(u^i)u^{i,1}\log u^{i,1}-D_0\sum_i u^ic_i(u^i)u^{i,1}\log u^{i,1}\biggr).
\]
One can check directly, or refer to \cite{liu2005deformations}, to confirm the fact that actually $\qt$ is a 1-form with differential polynomial coefficients and that $\qt\in\mathcal Z$.

By a straightforward computation we can show that 
\[ind_i(\qt) = -3c_i(u^i),\quad i = 1,\cdots,n.\]
Therefore, it follows from Lemma \ref{s3-t7} that we can choose suitable $c_i(u^i)$ such that 
\[ind_i(\qo-\qt) = 0\] for any given cocycle $\qo\in\mathcal Z$. Then we apply Lemma \ref{s3-t8} to conclude that the class $[\qo]$ coincides with the class $[\qt]$ in the space $\mathcal Z/\mathcal B$. Lemma \ref{s3-t8} also implies that $\qt$ is a coboundary if and only if $c_i(u^i) = 0$, hence different choices of functions $c_1(u^1),\cdots,c_n(u^n)$ give different classes $[\qt]$ in $\mathcal Z/\mathcal B$. The theorem is proved.
\end{proof}

The remaining part of this section is devoted to the proof of Lemma \ref{s3-t7} and Lemma \ref{s3-t8}. The strategy of our proof is: We first prove the `only if' part of Lemma \ref{s3-t8}, then we prove Lemma \ref{s3-t7}. Finally we prove the `if' part of Lemma \ref{s3-t8}.

\begin{proof}[\textbf{Proof of Lemma \ref{s3-t8} (Part 1)}] We are to show that if $\qo\in\mathcal B$ then its indices vanish. Take $\qa\in\bar\Qo^0_1$ and write it uniquely as
\[
\qa = \int \sum_{i,j}\qa^{(i)}_ju^{j,1}\qd u^i,\]
then we have
\begin{align}
\label{s3-t9}
\tilde D_0\qa &= \int \sum_{i,j}D_0(\qa^{(i)}_ju^{j,1})\qd u^i+\qa^{(i)}_ju^{j,1}\qd(D_0(u^i))\\
\nonumber
&=\int \sum_{i,j}\left(\qa^{(i)}_jf^j\qth_j^2+\cdots\right)\qd u^i+\left(-\qa^{(i)}_jf^iu^{j,2}+\cdots\right)\qd\qth_i.
\end{align}
Here $\cdots$ stands for the terms that make no contribution to the index. Then from Definition \ref{ind-zh} it follows that $ind_i(\tilde D_0\qa) = 0$. Similarly we have $ind_i(\tilde D_1\qa) = 0$. Hence the indices of a coboundary must vanish.
\end{proof}

The above proof shows that the indices can be defined for a class $[\qo]\in\mathcal Z/\mathcal B$. To prove Lemma \ref{s3-t7}, we first choose a `normal form' for every class $[\qo]\in\mathcal Z/\mathcal B$ that will simplify the computation.

\begin{Lem}
\label{s3-t10}
For any given class $[\qs]\in\mathcal Z/\mathcal B$, there exists a unique $\qo\in\mathcal Z$ which can be represented in the form \eqref{s3-t4}--\eqref{s3-t6} and satisfies the following conditions:
\begin{enumerate}
\item[\rm(1)] $[\qo] = [\qs]$;
\item[\rm(2)] $X^{(i)}_j = 0$ for $j\neq i$;
\item[\rm(3)] $Y^{(i)}_j = 0$ for any $i,j$;
\item[\rm(4)] $Y^{(i)}_{ii} = 0$.
\end{enumerate}
Such a form $\qo$ is called the normal form of the class $[\qs]$.
\end{Lem}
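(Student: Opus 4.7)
The plan is to exhibit the coboundary freedom in $\mathcal{B}$ explicitly and realize conditions (2)--(4) through two decoupled $2\times 2$ linear systems, each with a nonvanishing determinant. First I would parametrize the gauge: since $\hm A$ carries no negative super degree, every element of $\bar\Qo^0_1$ admits a unique canonical representative $\alpha = \int \sum_{i,j} \alpha^{(i)}_j(u)\, u^{j,1}\, \qd u^i$, and similarly $\beta = \int \sum_{i,j} \beta^{(i)}_j(u)\, u^{j,1}\, \qd u^i$. The coboundary $\tilde D_0\alpha + \tilde D_1\beta$ that one may subtract from any representative of $[\qs]$ is thus controlled by $2n^2$ free functions on $M$, matching exactly the $n(n-1) + n^2 + n = 2n^2$ scalar constraints imposed by (2)--(4).

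For the off-diagonal pairs $i \neq j$, I would use the explicit formula for $D(f^1,\ldots,f^n)$ recalled at the start of this section to verify that $\alpha^{(i)}_j, \beta^{(i)}_j$ are the only gauge parameters influencing $X^{(i)}_j$ and $Y^{(i)}_j$, entering through the algebraic system
\begin{equation*}
\begin{pmatrix} f^j & u^j f^j \\ f^i & u^i f^i \end{pmatrix}
\begin{pmatrix} \alpha^{(i)}_j \\ \beta^{(i)}_j \end{pmatrix}
= \begin{pmatrix} -X^{(i)}_j \\ Y^{(i)}_j \end{pmatrix}.
\end{equation*}
Its determinant equals $f^i f^j(u^i - u^j)$, which is nonzero by the semisimplicity hypothesis (the $f^i$ are nowhere vanishing and the canonical coordinates are pairwise distinct). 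Hence $\alpha^{(i)}_j, \beta^{(i)}_j$ are uniquely determined for every $i \neq j$.

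The diagonal step $i = i$ is the heart of the proof and the main technical obstacle. With the off-diagonal parameters now fixed, the contribution of $\alpha^{(i)}_i, \beta^{(i)}_i$ to the $(u^{i,1})^2\,\qd\qth_i$-coefficient of $\tilde D_0\alpha + \tilde D_1\beta$ involves genuine derivatives $\qp_i \alpha^{(i)}_i, \qp_i \beta^{(i)}_i$, together with an algebraic term $\tfrac{1}{2}\bigl(\alpha^{(i)}_i \qp_i f^i + \beta^{(i)}_i \qp_i(u^i f^i)\bigr)$ and a remainder $R_i$ depending only on the already-determined off-diagonal data. The key observation is that the companion condition $Y^{(i)}_i = 0$ forces the algebraic identity $\alpha^{(i)}_i f^i + \beta^{(i)}_i u^i f^i = -Y^{(i)}_i$; differentiating this identity in $u^i$ cancels the derivatives of the gauge parameters from the $Y^{(i)}_{ii}$-equation and reduces the pair $\{Y^{(i)}_i = 0,\ Y^{(i)}_{ii} = 0\}$ to the algebraic $2\times 2$ system
\begin{equation*}
\begin{pmatrix} f^i & u^i f^i \\ \qp_i f^i & \qp_i(u^i f^i) \end{pmatrix}
\begin{pmatrix} \alpha^{(i)}_i \\ \beta^{(i)}_i \end{pmatrix}
= \begin{pmatrix} -Y^{(i)}_i \\ 2\bigl(Y^{(i)}_{ii} - \qp_i Y^{(i)}_i - R_i\bigr) \end{pmatrix},
\end{equation*}
whose determinant equals $(f^i)^2 \neq 0$. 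Uniqueness follows immediately from the same calculation: if two normal forms differ by some $\tilde D_0\alpha + \tilde D_1\beta$ that itself lies in normal form with vanishing right-hand sides, both $2\times 2$ systems force all gauge parameters to be zero. The technical difficulty I anticipate is the careful derivation of the gauge contribution to $Y^{(i)}_{ii}$, which requires tracking the $\qd\qth_i^1$-piece of $\qd(D_a(u^i))$ through integration by parts and verifying that the cross terms generated by the lower-order $\qth$-part of $D_a(u^i)$ collect cleanly into the off-diagonal remainder $R_i$.
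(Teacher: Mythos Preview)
Your approach is correct and closely parallel to the paper's, though organized differently. The paper proceeds in two sequential steps: first it kills all $Y^{(i)}_j$ using a pure $\tilde D_0\gamma$-gauge with $\gamma=\int\sum_{i,j}\frac{Y^{(i)}_j}{f^i}u^{j,1}\,\qd u^i$, and only then introduces $\tilde D_0\alpha+\tilde D_1\beta$ subject to the constraint $\alpha_i=-u^i\beta_i$ (which preserves condition~(3)); under that constraint the coefficient of $(u^{i,1})^2$ in $W_i+R_i$ collapses to $\beta^{(i)}_if^i$ and the coefficient of $\qth_j^2$ to $\beta^{(i)}_j(u^j-u^i)f^j$, so each remaining condition is solved by a single scalar equation rather than a $2\times2$ system. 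Your simultaneous $2\times2$ formulation is equivalent and arguably more transparent, at the cost of the derivative-elimination trick in the diagonal step. One small correction: the $\tfrac12\beta_if^iu^{i,1}$ term in $R_i$ contributes an extra $\tfrac12\beta^{(i)}_if^i$ to the $(u^{i,1})^2$-coefficient, so after your substitution the bottom-right entry of the diagonal matrix is $\qp_i(u^if^i)+f^i$ rather than $\qp_i(u^if^i)$, giving determinant $2(f^i)^2$; the invertibility conclusion is of course unaffected.
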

\begin{proof}
We first take $\qo = \qs$, then the first condition is satisfied. We then adjust $\qo$ by adding elements of $\mathcal B$ such that other conditions are also satisfied. Firstly, according to \eqref{s3-t9}, we change $\qo$ to $\tilde\qo = \qo+\tilde D_0\qg$ with
\[
\qg = \int \sum_{i,j}\frac{Y^{(i)}_j}{f^i}u^{j,1}\qd u^i,
\]
then the third condition is satisfied. As for other conditions, we want to find $\qa,\qb\in\bar \Qo^0_1$ such that $\tilde\qo+\tilde D_0\qa+\tilde D_1\qb$ satisfies all the four conditions.

Let us write $\qa = \int\sum_i\qa_i\qd u^i$ and $\qb = \int\sum_i\qb_i\qd u^i$ for some $\qa_i,\qb_i\in\hm A^0_1$. We can uniquely represent $\tilde D_0\qa$ and $\tilde D_1\qb$ as follows:
\begin{equation}
\tilde D_0\qa = \int \sum_i A_i\qd u^i+W_i\qd\qth_i;\quad\tilde D_1\qb = \int \sum_i B_i\qd u^i+R_i\qd\qth_i,
\end{equation}
where the differential polynomials $W_i$ and $R_i$ are given by
\begin{align*}
W_i =& -(\qa_if^i)^\prime+\frac 12\sum_j\left(\qa_i\aij ji u^{j,1}+\qa_j\bij ji u^{i,1}-\qa_j\bij ij u^{j,1}\right);\\ 
R_i =& -(\qb_iu^if^i)^\prime+\frac 12\sum_j\left(\qb_iu^i\aij ji u^{j,1}+\qb_ju^j\bij ji u^{i,1}-\qb_ju^i\bij ij u^{j,1}\right)\\
&+\frac 12 \qb_if^iu^{i,1}.
\end{align*}
Note that we should make sure that the third condition is satisfied, so if we further write $\qa_i = \sum_j\qa^{(i)}_ju^{j,1}$ and $\qb_i = \sum_j\qb^{(i)}_ju^{j,1}$, and compare the coefficients of $u^{j,2}$ of $W_i$ and $R_i$, we arrive at $u^i\qb^{(i)}_{j}+\qa^{(i)}_{j} = 0$, hence we must take $\qa_i = -u^i\qb_i$. Since the coefficients of $(u^{i,1})^2$ in $W_i+R_i$ are given by $\qb_{i}^{(i)}f^i$, we can choose suitable functions $\qb_{i}^{(i)}$ such that the condition (4) is satisfied.
Finally we compute the coefficients of $\qth_j^2$ in $A_i+B_i$ to obtain
\[
\qb^{(i)}_j(u^j-u^i)f^j,
\]
so for $j\neq i$ we can choose suitable functions $\qb^{(i)}_j$ such that the condition (2) is satisfied. Thus we find a form $\qo$ which satisfy all the four conditions, and the above computation also shows that such an $\qo$ is unique. The lemma is proved.
\end{proof}

As a byproduct of the above computation, we have the following theorem. 
\begin{Th}
\label{vbh12}
We have $\vbh^1_2(\bar\Qo,\tilde D_0,\tilde D_1) = 0$.
\end{Th}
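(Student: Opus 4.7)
The plan is to combine the triviality of the variational Hamiltonian cohomology from the previous subsection with the coefficient--matching calculations that have just been carried out in the proof of Lemma~\ref{s3-t10}.

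First, since super degrees are non-negative we have $\bar\Qo^{-1}_0 = 0$, hence $\ima(\tilde D_0\tilde D_1)\cap\bar\Qo^1_2 = 0$, and the theorem reduces to showing $\bar\Qo^1_2\cap\ker\tilde D_0\cap\ker\tilde D_1 = 0$. Given $\qo$ in this intersection, I apply the triviality corollary of Section~\ref{vbc} separately to $\tilde D_0$ and $\tilde D_1$ (valid because here $p=1>0$ and $d=2>0$), obtaining $\qa,\qb\in\bar\Qo^0_1$ with $\qo = \tilde D_0\qa = \tilde D_1\qb$, so that
\[
\tilde D_0\qa - \tilde D_1\qb = 0.
\]

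The core of the argument will be a direct adaptation of the computation in the proof of Lemma~\ref{s3-t10}, with the sum $\tilde D_0\qa+\tilde D_1\qb$ there replaced by the difference $\tilde D_0\qa-\tilde D_1\qb$ here. Writing $\qa_i = \sum_j \qa^{(i)}_j u^{j,1}$ and $\qb_i = \sum_j \qb^{(i)}_j u^{j,1}$, the coefficient of $u^{j,2}$ in the $\qd\qth_i$--component forces $\qa^{(i)}_j = u^i\qb^{(i)}_j$, while the coefficient of $\qth_j^2$ in the $\qd u^i$--component forces $\qa^{(i)}_j = u^j\qb^{(i)}_j$. Invoking the semisimplicity condition $u^i\ne u^j$ for $i\ne j$ then yields $\qb^{(i)}_j = \qa^{(i)}_j = 0$ for every $j\ne i$, so that $\qa_i = u^i\qb^{(i)}_i u^{i,1}$ and $\qb_i = \qb^{(i)}_i u^{i,1}$.

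Finally, I compare the coefficient of $(u^{i,1})^2$ in the $\qd\qth_i$--component of $\tilde D_0\qa-\tilde D_1\qb$: after the previous simplifications, the long expression collapses (most terms involving $\qp_i\qb^{(i)}_i$ and $\qp_if^i$ cancel) to a nonzero constant multiple of $\qb^{(i)}_if^i$. Since $f^i\ne 0$, this gives $\qb^{(i)}_i = 0$ for all $i$, so $\qa = \qb = 0$ and $\qo = \tilde D_0\qa = 0$, proving $\vbh^1_2 = 0$. The only real obstacle is the bookkeeping of signs when porting the computation from Lemma~\ref{s3-t10}: there the relation $\qa_i = -u^i\qb_i$ arises from a sum, whereas here the analogous relation $\qa_i = +u^i\qb_i$ arises from a difference, and one must check that the sign flip at every stage still leaves a nonvanishing coefficient in front of $\qb^{(i)}_i f^i$ in the final step.
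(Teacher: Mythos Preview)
Your proposal is correct and follows essentially the same route as the paper's proof: reduce to $\tilde D_0\qa=\tilde D_1\qb$ via triviality of the variational Hamiltonian cohomology, then read off the three coefficient relations ($u^{j,2}$ in $\qd\qth_i$, $\qth_j^2$ in $\qd u^i$, and $(u^{i,1})^2$ in $\qd\qth_i$) to force $\qa=\qb=0$. The only cosmetic differences are the order in which you treat the three coefficients and your framing of the computation as ``sum $\to$ difference'' relative to Lemma~\ref{s3-t10}; the sign worry you flag is harmless, since the final $(u^{i,1})^2$ comparison indeed collapses to $-\tfrac12\,\qb^{(i)}_i f^i$.
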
 
\begin{proof}
Recall that by definition $\vbh^1_2(\bar\Qo,\tilde D_0,\tilde D_1) = \bar\Qo^1_2\cap\ker\tilde D_0\cap\ker\tilde D_1$. Now take any cocycle $\qo$, by using the triviality of the variational Hamiltonian cohomology we can find $\qa, \qb\in \bar\Qo^0_1$ such that 
\begin{equation}
\qo = \tilde D_0\qa = \tilde D_1\qb.
\end{equation}
Let us show that $\qa=\qb=0$. To this end we write $\qa = \int\sum_i\qa_i\qd u^i$ and $\qb = \int\sum_i\qb_i\qd u^i$ for some $\qa_i,\qb_i\in\hm A^0_1$, and represent $\tilde D_0\qa$, $\tilde D_1\qb$, as we do in the proof of Lemma \ref{s3-t10}, in the form
\begin{equation}
\tilde D_0\qa = \int \sum_i A_i\qd u^i+W_i\qd\qth_i;\quad\tilde D_1\qb = \int \sum_i B_i\qd u^i+R_i\qd\qth_i.
\end{equation}
From the coefficients of $u^{j,2}$ in $W_i$ and $R_i$ it follows that $\qa_i = u^i\qb_i$, and from the coefficients of $(u^{i,1})^2$ in $W_i$ and $R_i$ we conclude that $\qb^{(i)}_i = 0$.
Finally, from the coefficients of $\qth^{2}_j$ in $A_i$ and $B_i$ for $j\neq i$ it follows that $\qb^{(i)}_j = 0$ when $j\neq i$. Therefore $\qa = \qb = 0$ and the theorem is proved.
\end{proof}

Now let us come back to prepare the proof of Lemma \ref{s3-t7}. Take a class $[\qo]\in\mathcal Z/\mathcal B$ with $\qo$ being its normal form which can be represented as
\begin{equation}
\label{norm-1}
\qo = \int \sum_i X^i\qd u^i+Y^i\qd\qth_i,\quad X^i\in\hm A^1_2,\quad Y^i\in\hm A^0_2,
\end{equation}
where the differential polynomials can be written in the form
\begin{align}
\label{norm-2}
X^i &=  X^{(i)}_{i}\qth_i^2+\sum_{j,k}\left(X^{(i)}_{kj}u^{j,1}\qth_k^1+Z^{(i)}_{jk}u^{k,2}\qth_j\right)+\sum_{j,k,l}Z^{(i)}_{j;kl}u^{k,1}u^{l,1}\qth_j;\\
\label{norm-3}
Y^i &= \sum_{j,k}Y^{(i)}_{jk}u^{j,1}u^{k,1},\quad Y^{(i)}_{ii} = 0.
\end{align}
Further more, we require that
\begin{equation}
\label{norm-4}
Z^{(i)}_{j;kl} = Z^{(i)}_{j;lk},\quad Y^{(i)}_{jk}=Y^{(i)}_{kj}.\end{equation}
Due to the part of Lemma \ref{s3-t8} that we just proved, different representatives of a class in $\mathcal Z/\mathcal B$ have the same indices, hence to prove Lemma \ref{s3-t7} we only need to show that  $ind_i(\qo) = X^{(i)}_i/f_i$ is a function of $u^i$ for each $i=1,\dots,n$.

Since $\qo\in\mathcal Z$, from the triviality of variational Hamiltonian cohomology it follows the existence of $\qa\in\bar\Qo^1_2$ such that $\tilde D_0\qo = \tilde D_1\qa$. Such an $\qa$ is unique up to the addition of an image of $\tilde D_1$, hence we can make a particular choice of $\qa$ in a similar way as we choose the normal form of $\qo$. More explicitly, it is not difficult to see that there exists a unique $\qa$ such that
\begin{equation}
\label{norm-5}
\qa = \int \sum_i P^i\qd u^i+Q^i\qd\qth_i,\quad P^i\in\hm A^1_2,\quad Q^i\in\hm A^0_2,
\end{equation}
where the differential polynomials can be written as
\begin{align}
\label{norm-6}
P^i &=  \sum_j P^{(i)}_{j}\qth_j^2+\sum_{j,k}\left(P^{(i)}_{kj}u^{j,1}\qth_k^1+W^{(i)}_{jk}u^{k,2}\qth_j\right)+\sum_{j,k,l}W^{(i)}_{j;kl}u^{k,1}u^{l,1}\qth_j,\\
\label{norm-7}
Q^i &= \sum_{j,k}Q^{(i)}_{jk}u^{j,1}u^{k,1}
\end{align}
with
\begin{equation}
\label{norm-8}
W^{(i)}_{j;kl} = W^{(i)}_{j;lk},\quad Q^{(i)}_{jk}=Q^{(i)}_{kj}.\end{equation}

\begin{proof}[\textbf{Proof of Lemma \ref{s3-t7}}]
Consider a class $[\qo]\in\mathcal Z/\mathcal B$ where $\qo$ is given by the normal form \eqref{norm-1}--\eqref{norm-4}. Let $\qa\in\bar\Qo^1_2$ be the unique element given by \eqref{norm-5}--\eqref{norm-8} such that $\tilde D_0\qo = \tilde D_1\qa$. For the bihamiltonian structure $(P_0, P_1)$ given in \eqref{norm-p}, we denote
\begin{equation}
\label{aij}
a_{ij} = \frac{1}{2}\qp_if^j,\quad b_{ij} = \frac{1}{2}f^i\frac{\qp_if^j}{f^j}.
\end{equation}
Let us first compute the differential polynomials $M^i,N^i,S^i,T^i$ that are defined by
\[
\tilde D_0\qo = \int \sum_i M^i\qd u^i+N^i\qd\qth_i,\quad \tilde D_1\qa = \int \sum_i S^i\qd u^i+T^i\qd\qth_i.
\]
In what follows of the proof, we will omit the symbol of summations and use $j,k,l$ to denote the indices that should be summed over $1,\cdots, n$. The index $i$ is a fixed index and do not participate in the summation. It is straightforward to obtain
\begin{align*}
M^i &= D_0(X^i)-\qp_if^jX^j\qth_j^1-X^k\qp_ia_{jk}u^{j,1}\qth_k+\left(X^ja_{ij}\qth_j\right)'-X^k\qp_ib_{kj}u^{j,1}\qth_j\\
&+\left(X^jb_{ji}\qth_i\right)'+X^k\qp_ib_{jk}u^{k,1}\qth_j-\lrb{X^ib_{ji}\qth_j}'+Y^k\qp_ia_{kj}\qth_j\qth_j^1
+Y^k\qp_ib_{jk}\qth_k\qth_j^1\\
&-Y^k\qp_ib_{jk}\qth_j\qth_k^1+Y^l\qp_i\qp_lb_{kj}u^{j,1}\qth_k\qth_j-\left(Y^j\qp_jb_{ki}\qth_k\qth_i\right)'\\
&-Y^l\qp_i\qp_jb_{kl}u^{j,1}\qth_k\qth_l+\left(Y^j\qp_ib_{kj}\qth_k\qth_j\right)',
\end{align*}
\begin{align*}
S^i &= D_1(P^i)-P^j\qp_i\lrb{u^jf^j}\qth_j^1-P^k\qp_i\kk{u^ka_{jk}}u^{j,1}\qth_k+\kk{P^ju^ja_{ij}\qth_j}'\\
&-P^k\qp_i\kk{u^kb_{kj}}u^{j,1}\qth_j+\kk{P^ju^jb_{ji}\qth_i}'+P^k\qp_i\kk{u^jb_{jk}}u^{k,1}\qth_j-\kk{P^iu^jb_{ji}\qth_j}'\\
&-P^ja_{ij}u^{j,1}\qth_j+\kk{\frac 12 P^if^i\qth_i}'+Q^k\qp_i\kk{u^ja_{kj}}\qth_j\qth_j^1+Q^k\qp_i\kk{u^jb_{jk}}\qth_k\qth_j^1\\
&-Q^k\qp_i\kk{u^jb_{jk}}\qth_j\qth_k^1+Q^ja_{ij}\qth_j\qth_j^1+Q^l\qp_i\qp_l\kk{u^kb_{kj}}u^{j,1}\qth_k\qth_j\\
&-\kk{Q^j\qp_j\kk{u^kb_{ki}}\qth_k\qth_i}'-Q^l\qp_i\qp_j\kk{u^kb_{kl}}u^{j,1}\qth_k\qth_l+\kk{Q^j\qp_i\kk{u^kb_{kj}}\qth_k\qth_j}',\\
N^i &= D_0(Y^i)+\kk{X^if^i}'-X^ia_{ji}u^{j,1}-X^jb_{ji}u^{i,1}+X^jb_{ij}u^{j,1}-\kk{Y^ja_{ji}\qth_i}'\\
&-Y^ja_{ji}\qth_i^1-\kk{Y_jb_{ij}\qth_j}'-Y^ib_{ji}\qth_j^1+\kk{Y^ib_{ji}\qth_j}'+Y^jb_{ij}\qth_j^1\\
&+Y^j\qp_jb_{ki}u^{i,1}\qth_k-Y^k\qp_kb_{ij}u^{j,1}\qth_j-Y^i\qp_jb_{ki}u^{j,1}\qth_k+Y^k\qp_jb_{ik}u^{j,1}\qth_k,\\
T^i &= D_1(Q^i)+\kk{P^iu^if^i}'-P^iu^ia_{ji}u^{j,1}-P^ju^jb_{ji}u^{i,1}+P^ju^ib_{ij}u^{j,1}-\frac 12 P^if^iu^{i,1}\\
&-\kk{Q_ju^ia_{ji}\qth_i}'-Q^ju^ia_{ji}\qth_i^1-\kk{Q^ju^ib_{ij}\qth_j}'-Q^iu^jb_{ji}\qth_j^1+\kk{Q_iu^jb_{ji}\qth_j}'\\
&+Q^ju^ib_{ij}\qth_j^1+Q^j\qp_j\kk{u^kb_{ki}}u^{i,1}\qth_k-Q^k\qp_k\kk{u^ib_{ij}}u^{j,1}\qth_j\\
&-Q^i\qp_j\kk{u^kb_{ki}}u^{j,1}\qth_k+Q^k\qp_j\kk{u^ib_{ik}}u^{j,1}\qth_k-\kk{\frac 12 Q^if^i\qth_i}'-\frac 12 Q^if^i\qth_i^1.
\end{align*}

By comparing the coefficients of $\qth_j^3$ of both sides of the equation $N^i = T^i$ we obtain 
\begin{equation}
\label{cond-p1-1}
P^{(i)}_j = 0,\quad i\neq j;\quad X^{(i)}_i = u^iP^{(i)}_i.
\end{equation}
Comparing the coefficients of $u^{k,3}\qth_j$ on both sides of the equation $N^i = T^i$, we conclude that
\ee{
\label{cond-p1-2}
Z^{(i)}_{jk} = u^iW^{(i)}_{jk},\quad\forall\ i,j,k.
}

Next, from the coefficients of $\qth_j^1\qth_i^2$ in $M^i$ and $S^i$ for $j\neq i$, and from \eqref{cond-p1-1} it follows that
\ee{
\label{cond-p5}
\kk{\qp_jP^{(i)}_if^j-2P^{(i)}_ib_{ji}}(u^i-u^j) = \kk{X^{(i)}_{ji}-P^{(i)}_{ji}u^i}f^i,\quad i\neq j.
}
Similarly, from the coefficients of $u^{j,2}\qth_k^1$ in $N^i$ and $T^i$, and from the identity \eqref{cond-p1-2} it follows that
\ee{
\label{cond-p8-3}
X^{(i)}_{kj} = u^iP^{(i)}_{kj},\quad\forall\ i,j,k.
}
The identity \eqref{cond-p5} and \eqref{cond-p8-3} lead to
\[\qp_j\frac{P^{(i)}_i}{f^i} = 0,\quad i\neq j.\]
Thus for each $i$ the function
\[
ind_i(\qo) = \frac{X^{(i)}_i}{f^i} = u^i\frac{P^{(i)}_i}{f^i}.
\]
depends only on $u^i$. The lemma is proved.
\end{proof}

Finally, let us complete the proof of Lemma \ref{s3-t8}. Before proceeding to the proof, we first derive some relations satisfied by the coefficients in $\qo$ and $\qa$. In the rest of this section,  we will continue to use the notations which are used in the proof of Lemma \ref{s3-t7}.

By comparing the coefficients of $\qth_k\qth_j^3$, $\qth_j\qth_j^3$ and $\qth_i\qth_j^3$ in $M^i$ and $S^i$ and by using the relations \eqref{cond-p1-1} and \eqref{cond-p1-2} we obtain
\ee{
\label{cond-p2-1}
Z^{(i)}_{kj} = W^{(i)}_{kj} = 0,\quad i\neq j\neq k\neq i.
}
\ee{
\label{cond-p2-2}
P^{(i)}_ia_{ij} = W^{(i)}_{jj}f^j,\quad i\neq j.
}
\ee{
\label{cond-p3}
P^{(i)}_ib_{ji} = W^{(i)}_{ij}f^j,\quad i\neq j.
}
We also compare the coefficients of $\qth_k^1\qth_j^2$, $\qth_i^1\qth_j^2$ and $\qth_j^1\qth_j^2$ in $M^i$ and $S^i$ to arrive at
\ee{\label{cond-p4-1}X^{(i)}_{kj} = P^{(i)}_{kj}u^j,\quad i\neq j\neq k\neq i.}
\ee{\label{cond-p4-2}
3P^{(i)}_i(u^i-u^j)b_{ji} = (X^{(i)}_{ij}-P^{(i)}_{ij}u^j)f^j,\quad i\neq j.
}
\ee{\label{cond-p4-3}
P^{(i)}_i(u^i-u^j)a_{ij} = (X^{(i)}_{jj}-P^{(i)}_{jj}u^j)f^j,\quad i\neq j.
}
By comparing the coefficients of $u^{k,1}\qth_j^2$, $u^{j,1}\qth_j^2$, $u^{i,1}\qth_j^2$, $u^{j,1}\qth_i^2$ and $u^{i,1}\qth_i^2$ in $N^i$ and $T^i$, and by using the relation \eqref{cond-p8-3} we arrive at the following identities:
\ee{\label{cond-p7-1}
Y^{(i)}_{jk} = Q^{(i)}_{jk}u^j,\quad i\neq j\neq k\neq i.
}
\ee{\label{cond-p7-2}
2Y^{(i)}_{jj}f^j+X^{(j)}_jb_{ij} = 2Q^{(i)}_{jj}u^jf^j+P^{(j)}_ju^ib_{ij},\quad i\neq j.
}
\ee{\label{cond-p7-3}
Y^{(i)}_{ji} = Q^{(i)}_{ji}u^j,\quad i\neq j.
}
\ee{\label{cond-p8-1}
Y^{(i)}_{ij} = Q^{(i)}_{ij}u^i,\quad i\neq j.
}
\ee{\label{cond-p8-2}
2Y^{(i)}_{ii} = 2Q^{(i)}_{ii}u^i+\frac 12 P^{(i)}_if^i.
}
\begin{proof}[\textbf{Proof of Lemma \ref{s3-t8} (Part 2)}] We need to show that a cocycle with vanishing indices must be a coboundary. Take a class $[\qo]\in\mathcal Z/\mathcal B$ with $\qo$ being its normal form and $ind_i(\qo) = 0$. Let $\qa\in\bar\Qo^1_2$ be the unique element given by \eqref{norm-5}--\eqref{norm-8} such that $\tilde D_0\qo = \tilde D_1\qa$. We will prove that $\qa=\qo=0$.

From $ind_i(\qo) = 0$ and \eqref{cond-p1-1} we know that $X^{(i)}_i = P^{(i)}_i = 0$. Then the identities \eqref{cond-p1-2} and \eqref{cond-p2-1}--\eqref{cond-p3} show that the coefficients $W^{(i)}_{jk}$ vanish when $k\ne j$ and $Z^{(i)}_{ji} = u^iW^{(i)}_{ji}$; the identities \eqref{cond-p4-1}--\eqref{cond-p4-3} together with \eqref{cond-p8-3} show that the coefficients $P^{(i)}_{jk}$ vanish when $k\ne i$ and $X^{(i)}_{ji} = u^iP^{(i)}_{ji}$. Similarly, we conclude from the identities \eqref{norm-4}, \eqref{norm-8} and \eqref{cond-p7-1}--\eqref{cond-p8-2} that the coefficients $Q^{(i)}_{jk}=0$ when $k\ne j$ and $Y^{(i)}_{jj} = u^jQ^{(i)}_{jj}$. In particular, from \eqref{norm-4} we know that $Q^{(i)}_{ii} = 0$.

To simplify the notations, we use $Y^{(i)}_j$ and $Q^{(i)}_j$ to denote $Y^{(i)}_{jj}$ and $Q^{(i)}_{jj}$. Therefore we can represent the coefficients of $\qo$ and $\qa$ that are given in \eqref{norm-2}, \eqref{norm-3} and \eqref{norm-6}, \eqref{norm-7} as follows:
\begin{align*}
X^i &=  \sum_{j}\left(X^{(i)}_{ji}u^{i,1}\qth_j^1+Z^{(i)}_{ji}u^{i,2}\qth_j\right)+\sum_{j,k,l}Z^{(i)}_{l;jk}u^{k,1}u^{j,1}\qth_l,\\
P^i &=  \sum_{j}\left(P^{(i)}_{ji}u^{i,1}\qth_j^1+W^{(i)}_{ji}u^{i,2}\qth_j\right)+\sum_{j,k,l}W^{(i)}_{l;jk}u^{k,1}u^{j,1}\qth_l,\\
Y^i &= \sum_jY^{(i)}_j(u^{j,1})^2,\quad Q^i = \sum_jQ^{(i)}_j(u^{j,1})^2,
\end{align*}
where the coefficients satisfy the conditions
\begin{align}
\label{pc-1}
&X^{(i)}_{ji} = u^iP^{(i)}_{ji},\quad Z^{(i)}_{ji} = u^iW^{(i)}_{ji};\\
\label{pc-2}
&Y^{(i)}_{j} = u^jQ^{(i)}_{j},\quad Y^{(i)}_i = Q^{(i)}_{i} = 0.
\end{align}

Let us first compare the coefficients of $u^{i,1}u^{i,2}\qth_j$ in $N^i$ and $T^i$ to arrive at
\ee{\label{np4}
2Z^{(i)}_{j;ii} = 2u^iW^{(i)}_{j;ii}-\frac 12 W^{(i)}_{ji},\quad\forall\ i,j.
}
At the same time we compute the coefficients of $u^{i,1}\qth_j\qth_i^2$ in $M^i$ and $S^i$, and we obtain
\ee{\label{np7-0}-2Z^{(i)}_{ji}\qp_if^i-2Z^{(i)}_{j;ii}f^i = -2W^{(i)}_{ji}\qp_i(u^if^i)-2W^{(i)}_{l;ii}u^if^i-\frac 12 W^{(i)}_{ji}f^i,\quad\forall\ i,j.}
Then it follows from the identities \eqref{pc-1}, \eqref{np4} and \eqref{np7-0} that 
\ee{\label{np7}Z^{(i)}_{ji} = W^{(i)}_{ji} = 0,\quad\forall\ i,j.}

Next we compare the coefficients of $u^{k,1}u^{j,2}\qth_l$ in $N^i$ and $T^i$ for distinct $j$, $k$ and we obtain the relations
\ee{\label{np2}
Z^{(i)}_{l;jk} = u^iW^{(i)}_{l;jk},\quad j\neq k,\quad\forall\ i,l.
}
For $j\neq i$, we compare the coefficients of $u^{j,1}\qth_l\qth_k^2$ in $M^i$ and $S^i$ and we arrive at
\ee{\label{np5-1}Z^{(i)}_{l;jk} = u^kW^{(i)}_{l;jk},\quad i\neq j,\quad\forall\ k,l.}
Since $Z^{i}_{l;jk} = Z^{(i)}_{l;kj}$, $W^{i}_{l;jk} = W^{(i)}_{l;kj}$, the identities \eqref{np2} and \eqref{np5-1} imply  that the only possibly non-vanishing coefficients are $Z^{(i)}_{l;jj}$ and $W^{(i)}_{l;jj}$; moreover, from the identities \eqref{np4} and \eqref{np7} we conclude that
\ee{\label{np5-2}Z^{(i)}_{l;jj} = u^jW^{(i)}_{l;jj},\quad\forall\ i,j,l.}

We proceed to compute the coefficients of $(u^{i,1})^2\qth_j^1$ in $N^i$ and $T^i$. By using \eqref{pc-1} and \eqref{pc-2}, it is easy to deduce that
\ee{\label{np14-2}X^{(i)}_{ji} = P^{(i)}_{ji} = 0,\quad\forall\ i,j.}
Thanks to this equation, we can also compute  the coefficients of $(u^{j,1})^2\qth_j^1$ of $N^i$ and $T^i$ and obtain the relations
\ee{\label{np14-0}f^iZ^{(i)}_{j;jj} = u^if^iW^{(i)}_{j;jj}+2Q^{(i)}_jf^j,\quad i\neq j. }
On the other hand, it follows from \eqref{np7} and the coefficients of $u^{j,1}u^{j,2}\qth_j$ in $N^i$ and $T^i$ for $i\neq j$ that
\ee{\label{np3-2}
2f^iZ^{(i)}_{j;jj} = 2u^if^iW^{(i)}_{j;jj}+Q^{(i)}_jf^j,\quad i\neq j.
}
Thus the relations \eqref{np14-0} and \eqref{np3-2} together with \eqref{pc-2} lead to
\ee{\label{np14-1}Y^i = Q^i = 0,\quad\forall\ i.}

Now thanks to \eqref{np7}, \eqref{np14-2} and \eqref{np14-1}, we are able to easily compare the coefficients of $u^{j,1}u^{j,2}\qth_k$ and $u^{j,1}u^{j,2}\qth_i$ in $N^i$ and $T^i$ respectively and we arrive at
\aaa{\label{np3-1}
Z^{(i)}_{k;jj}
&=u^iW^{(i)}_{k;jj},\quad i\neq j\neq k\neq i;\\
\label{np3-3}
Z^{(i)}_{i;jj}
&=u^iW^{(i)}_{i;jj},\quad i\neq j,
}
Therefore by combining the identities \eqref{np5-2}, \eqref{np3-1} and \eqref{np3-3} we conclude that
\[
X^i = \sum_jZ^{(i)}_{j;ii}(u^{i,1})^2\qth_j,\quad P^i = \sum_jW^{(i)}_{j;ii}(u^{i,1})^2\qth_j,\quad Z^{(i)}_{j;ii} = u^iW^{(i)}_{j;ii}.\]

and comparing the coefficients of $(u^{i,1})^3$ we deduce that $P^i=0$,  and it follows from the above relation that $X^i$ also vanish. The lemma is proved.

\end{proof}

\section{Vanishing theorem of the variational bihamiltonian cohomology}\label{vsh}
\subsection{Vanishing theorem and the strategy of computation.}
In this section, we compute the general variational bihamiltonian cohomology groups $\vbh^p_d(\bar\Qo,\tilde D_0,\tilde D_1)$. The main result we are to obtain is the following theorem.
\begin{Th}
\label{thm-van}
The cohomology group $H^p_d(\Qo[\ql],\qp_\ql)$ vanishes unless the bidegree $(p,d)$ belongs to the following two cases:
\begin{align*}
&\mathrm{Case\, 1:}\quad d = 0,\cdots,n;\quad p = d+1,\cdots,d+n+1,\\
&\mathrm{Case\, 2:}\quad d = 2,\cdots,n+3;\quad p = d,\cdots,d+n.
\end{align*}
\end{Th}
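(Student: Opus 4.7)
My plan is to compute $H^p_d(\Qo[\ql], \qp_\ql)$ by introducing a filtration that reduces the problem to a tractable algebraic computation in canonical coordinates, combining the triviality of variational Hamiltonian cohomology from Section~\ref{vbc} with the normal-form techniques of Section~\ref{vbh23}.

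First, I would work in canonical coordinates \eqref{norm-p} and view $(\Qo[\ql], \qp_\ql)$ as a bicomplex with horizontal differential $\tilde D_1$ and vertical differential $-\ql\tilde D_0$, filtered by $\ql$-degree. The triviality theorem gives vanishing of the $E_1$ page in bidegrees with $p>0,\ d>0$, but this naive argument fails because the filtration is not bounded: each bidegree $(p,d)$ of the total complex receives contributions from every $\ql$-power, so spectral-sequence convergence is delicate. To control this, I would refine by an auxiliary filtration by the total polynomial degree in the higher jet variables $u^{i,s}, \qth_i^s$ with $s\ge 1$. Under this refinement, the associated graded complex becomes essentially Koszul-like in the fermionic generators $\qth_i^s$ and polynomial in the bosonic $u^{i,s+1}$, with a differential whose coefficients involve only the functions $f^i(u)$ in \eqref{norm-p}.

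Next, I would analyze this refined $E_1$ page at each bidegree $(p,d)$ by generalizing the Salamander-lemma reduction of Lemma~\ref{sal} together with the normal-form construction of Lemma~\ref{s3-t10}. The expected picture is that non-vanishing cohomology concentrates in bidegrees where the Koszul-type cohomology in $\qth_i^s$ pairs with the ``tower'' contributions from the $\ql$-direction; the two ranges in the theorem should mirror the $\vbh^2_3$ calculation of Theorem~\ref{h23}. Case~1 would correspond to low $d$ where the exceptional cocycles involve $\qth$-monomials spread across up to $n+1$ canonical coordinates (accounting for the width $n+1$ of the $p$-range), and Case~2 to higher $d$ where the interaction with the jet variables $u^{i,s}$ together with the $\ql$-shift from $-\ql\tilde D_0$ yields additional classes, analogous to the generators $\int\qd(D_1\sum_i c_i(u^i)u^{i,1}\log u^{i,1}-D_0\sum_i u^i c_i(u^i)u^{i,1}\log u^{i,1})$ that produced $\vbh^2_3$. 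The shift $d+3$ in Case~2 (versus $d+n$ for the $p$-range) already hints that the ``$\log$'' cocycles responsible for $\vbh^2_3$ should admit higher-order analogues whose differential degrees stop at $d=n+3$.

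The principal obstacle will be the convergence of the spectral sequence in the presence of the unbounded $\ql$-parameter and the non-local nature of $\tilde D_0, \tilde D_1$ (which contain $\qp_x^s$-terms of arbitrary order). One would need to verify either a Mittag-Leffler-type condition for the $\ql$-filtration, or combine it with the jet-order filtration into a double filtration that is regular in each fixed total bidegree $(p,d)$, so that for any cocycle of bidegree $(p,d)$ only finitely many $r$ contribute to $d_r$. Once convergence is established and the refined $E_1$ page is explicitly shown to vanish outside Cases~1 and 2 via coefficient-matching arguments of the type carried out in the proofs of Lemma~\ref{s3-t7} and Lemma~\ref{s3-t8}, the theorem follows.
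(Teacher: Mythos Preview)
Your proposal is not a proof; it is a collection of heuristics that do not coalesce into a working argument, and the key technical ideas the paper uses are absent.

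The filtration you start with---by $\ql$-degree---is the wrong one, and you correctly notice it is unbounded. Your proposed fix, an auxiliary filtration ``by total polynomial degree in the higher jet variables,'' is too vague: you never identify what the leading differential looks like or why its cohomology should be computable. The paper's filtration is by $\deg_u+\deg_\qth$, where $\deg_u$ counts $u^{i,s+1}$ and $\qd u^{i,s}$. This is bounded on each $\Qo^p_d[\ql]$, so convergence is automatic, and more importantly the leading piece of $\qp_\ql$ under this grading is
\[
\Qd_{-1}=\sum_i(-\ql+u^i)f^i\,\qd\hat d_i,\qquad
\qd\hat d_i=\sum_{s\ge 1}\qth_i^{s+1}\diff{}{u^{i,s}}+\sum_{s\ge 0}\qd\qth_i^{s+1}\diff{}{\qd u^{i,s}},
\]
a sum of commuting de Rham--type differentials weighted by $(-\ql+u^i)$. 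This specific structure, not a generic ``Koszul-like'' one, is what makes $^1E_1$ computable: it lets one decompose $\hm A$ as $\hm C\oplus\bigl(\bigoplus_i\hm C_i^{nt}\bigr)\oplus\hm M$ and reduce (via a second spectral sequence in the $u^i$-degree) to explicit pieces $\hm C[\ql]\{\qd\qth\}$, $\hat d_i(\hm C_i)\{\qd\qth\}$ and $\qd\hat d_i(\hm H_i)$. None of this appears in your outline.

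Your plan for the second step---to generalize the Salamander reduction and the normal-form arguments of Lemmas~\ref{sal} and~\ref{s3-t10}---is also off track. Those arguments are bespoke hand computations at bidegree $(2,3)$ and do not scale to arbitrary $(p,d)$. What the paper does instead on $^1E_1$ is introduce a \emph{third} filtration, by the $\qth^1$-degree $\deg_{\qth^1}$, and perform a rescaling $\Psi$ of the jet variables by powers of $f^i$ that reduces $\Qd_{0,1}$ to an operator $\phi_3$ built from Euler vector fields $\mathcal E_i$ and rotation coefficients $\qg_{ij}$. A further $\qth_i^1$-filtration then shows that on each summand the only surviving classes live in very specific monomial subspaces (e.g.\ $\hm C_0^i\qth_i\qth_i^2\qd\qth_j$), and one reads off the allowed bidegrees by direct counting. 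Your interpretation of Case~1 and Case~2 in terms of ``$\qth$-monomials spread across coordinates'' versus ``$\log$-type cocycles'' does not match this mechanism: Case~1 is simply the degree range of $\hm C[\ql]\{\qd\qth\}$, and Case~2 comes from the monomial counting on $\hat d_i(\hm C_i)\{\qd\qth\}$ and $\qd\hat d_i(\hm H_i)$.
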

\begin{proof}[Proof of Theorem \ref{vbh-res}]
Similar to Lemma \ref{s3-t3}, we have the following long exact sequence:
\begin{equation*}
\cdots\rightarrow H^p_d(\Qo[\ql],\qp_\ql)\rightarrow H^p_d(\bar\Qo[\ql],\qp_\ql)\rightarrow H^{p+1}_d(\Qo[\ql],\qp_\ql)\rightarrow\cdots.
\end{equation*}
Then Theorem \ref{vbh-res} follows from Theorem \ref{thm-van} together with Lemma \ref{lem-lambda},
Theorem \ref{h23} and Theorem \ref{vbh12}.
\end{proof}
%

The strategy to prove the above theorem is inspired by the work \cite{carlet2018deformations,carlet2018central}, where some appropriate spectral sequences are used to compute the bihamiltonian cohomology of a bihamiltonian structure of hydrodynamic type. Our computation in this section can be viewed as a certain generalization of that of \cite{carlet2018central}, therefore we will use the same notations as the ones used in \cite{carlet2018central} whenever possible.

For a semisimple bihamiltonian structure $(P_0,P_1)$ of hydrodynamic type, we will work in its canonical coordinates $u^1,\cdots, u^n$. Given $\qo\in\Qo^p$, we represent it in the form
\[\qo = \sum_{i;s\geq 0}g_{i,s}\qd u^{i,s}+h^i_s\qd\qth_i^s,\quad g_{i,s}\in\hm A^p,\ h^i_s\in\hm A^{p-1},\]
then by using the formula given in Example \ref{ex-3-2-zh} we have
\[\tilde D_a\qo = \sum_{i;s\geq 0}(D_{P_a}g_{i,s})\qd u^{i,s}+(-1)^pg_{i,s}\qd (D_{P_a}u^{i,s})+(D_{P_a}h^i_s)\qd\qth_i^s+(-1)^{p-1}h^i_s\qd (D_{P_a}\qth_i^s)\]
for $a=0, 1$. 
We are going to compute the cohomology of the complex $(\Qo[\ql],\qp_\ql=\tilde D_1-\ql\tilde D_0)$ in what follows.

Let us define a gradation, called the $u$-degree, on $\Qo$ by setting 
\begin{equation*}
\deg_u u^{i,s+1} = 1,\ \deg_u \qd u^{i,s} = 1,\quad i=1,\cdots,n;\quad s\geq 0,
\end{equation*}
and the $u$-degrees of other generators are set to be 0. Let us denote the super degree of an element $\qo$ of $\Qo$ by $\deg_\qth\qo$. We filtrate the complex $\Qo[\ql]$ by defining
\[
F^k\Qo[\ql] = \{\qo\in\Qo[\ql]\mid\deg_u\qo+\deg_\qth\qo\geq k\},
\]
then we have a filtration
\[\cdots F^{k+1}\Qo[\ql]\subset F^k\Qo[\ql]\subset\cdots F^0\Qo[\ql] = \Qo[\ql].\]
We also decompose the differential in the following way:
\[\qp_\ql = \Qd_{-1}+\Qd_0+\cdots,\quad \deg_u\Qd_k = k.\]
Note that $\deg_\qth\qp_\ql = 1$, therefore each $\Qd_k$ preserves the filtration. Now let $(^1E_r,d_r), r\geq 0$ be the spectral sequence induced by the filtration, then by the standard construction $(^1E_0,d_0) = (\Qo[\ql],\Qd_{-1})$, and $^1E_k$ is given by the cohomology of $^1E_{k-1}$ for $k\geq 1$. It is clear that when restricted to $\Qo^p_d[\ql]$, the above filtration is bounded and therefore this guarantees the convergence of the spectral sequence. Note that the differential $d_r$ with $r=0$ or $1$ is not the
induced differential on $\hm F$ used in Sect.\,\ref{vbc}.

We will compute $^1E_1$ and $^1E_2$ in the following subsections, to this end we need to construct some other spectral sequences as in \cite{carlet2018central}. The following standard fact is also used in the computation.
\begin{Lem}
\label{ha-lem}
Let $(\mathcal C^\bullet,d)$ be a cochain complex in an Abelian category. Assume that for each $p$ we have the decomposition $\mathcal C^p = \mathcal A^p\oplus \mathcal B^p$ with $\mathcal A^\bullet$ being a subcomplex, and $\mathcal A^\bullet$ is acyclic. Then 
\begin{equation}
H^p(\mathcal C^\bullet,d)\cong H^p(\mathcal B^\bullet,\pi_B\circ d).
\end{equation}
Here $\pi_B$ is the projection from $\mathcal C^\bullet$ to $\mathcal B^\bullet$.
\end{Lem}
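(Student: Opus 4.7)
The plan is to realize $(\mathcal{B}^\bullet,\pi_B\circ d)$ as the quotient cochain complex $\mathcal{C}^\bullet/\mathcal{A}^\bullet$, and then to run the long exact sequence in cohomology of the short exact sequence $0\to\mathcal{A}^\bullet\hookrightarrow \mathcal{C}^\bullet\twoheadrightarrow \mathcal{C}^\bullet/\mathcal{A}^\bullet\to 0$, using acyclicity of $\mathcal{A}^\bullet$ to collapse it to the desired isomorphism.

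First I would confirm that $\pi_B\circ d$ squares to zero, so that $(\mathcal{B}^\bullet,\pi_B\circ d)$ is indeed a complex. For any $b\in\mathcal{B}^p$, the decomposition gives $db=a'+b'$ with $a'\in\mathcal{A}^{p+1}$ and $b'\in\mathcal{B}^{p+1}$; because $\mathcal{A}^\bullet$ is a subcomplex, $da'\in\mathcal{A}^{p+2}$, and then
\[(\pi_B\circ d)^2 b=\pi_B(db')=\pi_B(d(db)-da')=\pi_B(-da')=0.\]
The splitting $\mathcal{C}^p=\mathcal{A}^p\oplus\mathcal{B}^p$ identifies $\mathcal{B}^\bullet$, as a graded object, with the quotient $\mathcal{C}^\bullet/\mathcal{A}^\bullet$, and the induced quotient differential corresponds under this identification precisely to $\pi_B\circ d$. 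Hence $(\mathcal{B}^\bullet,\pi_B\circ d)\cong \mathcal{C}^\bullet/\mathcal{A}^\bullet$ as cochain complexes.

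Finally, the long exact sequence
\[\cdots\to H^p(\mathcal{A}^\bullet)\to H^p(\mathcal{C}^\bullet)\to H^p(\mathcal{C}^\bullet/\mathcal{A}^\bullet)\to H^{p+1}(\mathcal{A}^\bullet)\to\cdots,\]
together with the hypothesis $H^\bullet(\mathcal{A}^\bullet)=0$, forces the middle arrow to be an isomorphism, yielding $H^p(\mathcal{C}^\bullet,d)\cong H^p(\mathcal{B}^\bullet,\pi_B\circ d)$. The argument is purely formal homological algebra, so there is no real obstacle to overcome; the only subtle point worth flagging is that $\mathcal{B}^\bullet$ itself is generally \emph{not} a subcomplex of $\mathcal{C}^\bullet$, which is exactly why the projected differential $\pi_B\circ d$, rather than $d$ itself, is required to turn the direct-summand splitting into an identification of complexes.
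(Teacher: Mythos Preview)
Your argument is correct: identifying $(\mathcal{B}^\bullet,\pi_B\circ d)$ with the quotient complex $\mathcal{C}^\bullet/\mathcal{A}^\bullet$ and then invoking the long exact sequence is the standard way to see this, and your verification that $(\pi_B\circ d)^2=0$ is clean. The paper itself does not give a proof of this lemma; it simply records it as a ``standard fact'' from homological algebra and uses it without further comment, so there is nothing to compare against beyond noting that your write-up supplies exactly the routine justification the paper omits.
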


\subsection{Computation of $^1E_1$.}\label{subsec-5-2-zh} We first write down $\Qd_{-1}$ in the explicit form
\begin{align}
\label{Qd-1}
\Qd_{-1} &= \sum_{s\geq 1;i}(-\ql+u^i)f^i\qth_i^{s+1}\diff{}{u^{i,s}}+\sum_{s\geq 0;i}(-\ql+u^i)f^i\qd\qth_i^{s+1}\diff{}{\qd u^{i,s}}\\
\nonumber
&:=\sum_i(-\ql+u^i)f^i\qd\hat d_i,
\end{align}
here we introduce the de Rham-type differential
\begin{equation}
\label{delta-di}
\qd\hat d_i = \sum_{s\geq 1}\qth_i^{s+1}\diff{}{u^{i,s}}+\sum_{s\geq 0}\qd\qth_i^{s+1}\diff{}{\qd u^{i,s}}.
\end{equation}
We will also use the notation
\begin{equation}
\label{di}
\hat d_i = \sum_{s\geq 1}\qth_i^{s+1}\diff{}{u^{i,s}}.
\end{equation}
\begin{Lem}
Each summand of the following decomposition is a cochain subcomplex with respect to $\Delta_{-1}$:
\begin{equation}
\label{decomp1}
\Qo[\ql] = \hm{A}[\ql]\{\qd\qth_i\mid i=1,\cdots,n\}\oplus\bigoplus_{i=1}^n\hm{A}[\ql]\{\qd u^{i,s},\qd\qth_i^{s+1}\mid s\geq 0\}.
\end{equation}
Here we use the notation $R\{g_1, g_2, \dots\}$ to denote the free $R$-module generated by $g_1, g_2, \dots$.
\end{Lem}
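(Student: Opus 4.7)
The plan is to verify directly that $\Delta_{-1}$ preserves each summand of the decomposition, based on the explicit form of $\qd\hat d_i$ given in \eqref{delta-di}. Since $\Delta_{-1} = \sum_i(-\ql+u^i)f^i\,\qd\hat d_i$, the scalar prefactors $(-\ql+u^i)f^i \in \hm A[\ql]$ are irrelevant for preservation of $\hm A[\ql]$-submodules, so the whole question reduces to understanding how each $\qd\hat d_i$ acts on the $\qd$-generators of $\Qo$.

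First I would split $\qd\hat d_i$ into two pieces: the \emph{bulk piece} $\hat d_i = \sum_{s\geq 1}\qth_i^{s+1}\,\partial/\partial u^{i,s}$, which is a derivation of $\hm A[\ql]$ that touches only the coefficients of a variational form and leaves all $\qd$-generators untouched; and the \emph{exchange piece} $\sum_{s\geq 0}\qd\qth_i^{s+1}\,\partial/\partial\qd u^{i,s}$, which picks out the coefficient of $\qd u^{i,s}$ and replaces that generator by $\qd\qth_i^{s+1}$, annihilating every other $\qd$-generator.

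Once this observation is made, checking the lemma is immediate. On the first summand $\hm A[\ql]\{\qd\qth_i\mid i=1,\dots,n\}$ the exchange piece of every $\qd\hat d_k$ vanishes (no $\qd u$-generators are present), while the bulk piece only modifies coefficients and leaves each $\qd\qth_i$ in place; so the image stays in this summand. On the $i$-th summand $\hm A[\ql]\{\qd u^{i,s},\qd\qth_i^{s+1}\mid s\geq 0\}$, the exchange piece of $\qd\hat d_k$ vanishes for $k\neq i$ since only $\qd u^{i,s}$ (and no other $\qd u^{j,s}$) is present, and for $k=i$ it sends $\qd u^{i,s}\mapsto\qd\qth_i^{s+1}$, which is another generator of the same summand; the bulk piece again only affects coefficients.

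The main obstacle is essentially absent: the lemma is a structural bookkeeping statement that follows by direct inspection of the operator $\qd\hat d_i$. Its real content, which is what motivates stating it separately, is that the computation of ${}^1E_1 = H(\Qo[\ql],\Delta_{-1})$ decouples into $n+1$ independent pieces, each of which can be analyzed on its own in the subsequent steps of the spectral sequence argument.
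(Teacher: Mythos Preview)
Your proof is correct and is exactly the direct verification the paper has in mind; in fact the paper states this lemma without proof, since it is immediate from the explicit formula \eqref{Qd-1}--\eqref{delta-di} for $\Delta_{-1}$. Your decomposition of $\qd\hat d_i$ into the ``bulk'' derivation $\hat d_i$ on coefficients and the ``exchange'' operator $\qd u^{i,s}\mapsto\qd\qth_i^{s+1}$ is precisely the content of that formula, and the invariance of each summand follows at once.
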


In what follows, we will use the same notations as in \cite{carlet2018deformations} to denote the following subspaces:
\begin{align*}
\hm{C} &= C^\infty(u)[\qth_1,\cdots,\qth_n,\qth_1^1,\cdots,\qth_n^1],\\
\hm{C}_i &= \hm{C}[[ u^{i,s},\qth_i^{s+1}\mid s\geq 1]],
\end{align*}
and use $\hm{C}_i^{nt}$ to denote the subspace of $\hm{C}_i$ spanned by nontrivial monomials, i.e., all monomials that contain at least one of the variables $u^{i,s}$, $\qth_i^{s+1}$ for $s\geq 1$. We also use $\hm{M}$ to denote the subspace of $\hm A$ spanned by monomials which contain at least one of the mixed quadratic expressions:
\begin{equation*}
u^{i,s}u^{j,t};\quad u^{i,s}\qth_j^{t+1};\quad \qth_i^{s+1}\qth_j^{t+1}
\end{equation*}
for some $i\neq j$ and $s,t\geq 1$. Then it is easy to see that there is a decomposition
\begin{equation}
\label{decomp2}
\hm A = \hm C\oplus\kk{\bigoplus_{i=1}^n\hm C_i^{nt}}\oplus\hm M,
\end{equation}
and each summand is preserved under the action of $\hat d_i$ given by \eqref{di}.
\begin{Lem}
We have
\begin{equation*}
H(\hm{A}[\ql]\{\qd\qth\},\Qd_{-1}) = \hm{C}[\ql]\{\qd\qth\}\oplus\bigoplus_{i=1}^n\frac{\hat d_i(\hm C_i[\ql])}{(-\ql+u^i)\hat d_i(\hm C_i[\ql])}\{\qd\qth\}.
\end{equation*}
Here $\{\qd\qth\}$ is the abbreviation of $\{\qd\qth_i,\ i=1,\cdots,n\}$.
\end{Lem}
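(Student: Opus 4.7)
The plan is to reduce the cohomology of $(\hm A[\ql]\{\qd\qth\},\Qd_{-1})$ to that of a scalar complex on $\hm A[\ql]$. On the subcomplex $\hm A[\ql]\{\qd\qth_i\}$ there are no $\qd u^{i,s}$ or $\qd\qth_i^{s+1}$, so the second summand of $\Qd_{-1}$ in \eqref{Qd-1} acts by zero and $\Qd_{-1}$ reduces to the $\hm A[\ql]$-linear operator $D := \sum_j(-\ql+u^j)f^j\hat d_j$ acting diagonally on the $\hm A[\ql]$-coefficient of each $\qd\qth_i$. Since $\hm A[\ql]\{\qd\qth\}$ is free over $\hm A[\ql]$ on the basis $\qd\qth_1,\ldots,\qd\qth_n$, it suffices to compute $H(\hm A[\ql],D)$ and tensor the result with this basis.

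Next, I would use that the decomposition \eqref{decomp2} is preserved by each $\hat d_j$---since $\hat d_j$ only exchanges $u^{j,s}\leftrightarrow\qth_j^{s+1}$ within slot $j$, it cannot alter the slot profile of a monomial---hence also by $D$. On $\hm C[\ql]$ every $\hat d_j$ vanishes, contributing all of $\hm C[\ql]$. On $\hm C_i^{nt}[\ql]$ only $\hat d_i$ acts nontrivially, so $D$ restricts to $(-\ql+u^i)f^i\hat d_i$. A standard Koszul-type homotopy shows that $\hat d_i$ is acyclic on $\hm C_i^{nt}$; together with the facts that $f^i$ is a unit in $\hm C$ and $-\ql+u^i$ is a non-zero-divisor in $\hm C[\ql]$, this gives $\ker(D|_{\hm C_i^{nt}[\ql]})=\hat d_i(\hm C_i[\ql])$ and $\ima(D|_{\hm C_i^{nt}[\ql]})=(-\ql+u^i)\hat d_i(\hm C_i[\ql])$, contributing the quotient $\hat d_i(\hm C_i[\ql])/(-\ql+u^i)\hat d_i(\hm C_i[\ql])$.

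The main obstacle is showing $(\hm M[\ql],D)$ is acyclic, and this is exactly where the semisimplicity of $(P_0,P_1)$ enters. I would refine $\hm M=\bigoplus_{|S|\geq 2}\hm M_S$ by the set $S\subset\{1,\ldots,n\}$ of slots whose order-$\geq 1$ jet variables appear in a monomial; each $\hm M_S$ is preserved by $D$, with $\hat d_i$ killing it for $i\notin S$. Viewing $\hm M_S$ as a tensor product over $\hm C[\ql]$ of free modules $\widetilde K_i$ spanned by nontrivial monomials in the slot-$i$ jet variables for $i\in S$, the restriction $D|_{\hm M_S}$ is the total differential of the tensor product of complexes $(\widetilde K_i,(-\ql+u^i)f^i\hat d_i)$, each of whose cohomology is annihilated by $(-\ql+u^i)$. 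By K\"unneth, $H(\hm M_S,D)$ is therefore annihilated by the ideal $I_S=\bigl((-\ql+u^i):i\in S\bigr)\subset\hm C[\ql]$. Since $|S|\geq 2$ and semisimplicity gives $u^i-u^j\in C^\infty(U)^\times$ for $i\neq j$, any two generators of $I_S$ differ by a unit, so $I_S=\hm C[\ql]$ and $H(\hm M_S,D)=0$. Summing over $S$ yields the acyclicity of $\hm M[\ql]$, and assembling the three contributions completes the proof.
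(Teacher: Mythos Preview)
Your proposal is correct and follows the same route as the paper: both reduce immediately to the scalar complex $(\hm A[\ql],\,D)$ with $D=\sum_j(-\ql+u^j)f^j\hat d_j$, at which point the paper simply cites \cite{carlet2018deformations} for $H(\hm A[\ql],D)$, whereas you have essentially reproduced the content of that cited computation (the decomposition \eqref{decomp2}, the Poincar\'e-type acyclicity of $\hat d_i$ on $\hm C_i^{nt}$, and the mixed-term vanishing via semisimplicity). One minor remark: your K\"unneth step on $\hm M_S$ can be phrased more directly by noting that the contracting homotopy $h_i$ for $\hat d_i$ on slot $i$ anticommutes with the other $D_j$, so $Dh_i+h_iD=(-\ql+u^i)f^i\cdot\mathrm{id}$ already exhibits multiplication by $(-\ql+u^i)$ as null-homotopic on $\hm M_S[\ql]$ without invoking a K\"unneth spectral sequence.
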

\begin{proof}
It is clear from the formula \eqref{Qd-1} that
\begin{equation*}
H(\hm{A}[\ql]\{\qd\qth\},\Qd_{-1}) = H(\hm{A}[\ql],\Qd_{-1})\{\qd\qth\},
\end{equation*}
where the action of $\Qd_{-1}$ on $\hm A[\ql]$ is just $\sum_i (-\ql+u^i)f^i\hat d_i$. Hence the lemma is proved by applying the result in \cite{carlet2018deformations}.
\end{proof}

According to the decomposition \eqref{decomp1}, we still need to compute cohomology
\[H(\hm{A}[\ql]\{\qd u^{i,s},\qd\qth_i^{s+1}\mid s\geq 0\},\Qd_{-1}).\]
To this end, we construct a second spectral sequence $^2E$. For a fixed index $i$, we define the $u^i$-degree by setting
\begin{equation*}
\deg_{u^i}u^{i,s+1}= 1,\quad \deg_{u^i}\qd u^{i,s} = 1,\quad s\geq 0,
\end{equation*}
and other generators have $u^i$-degree zero. Accordingly, we decompose the differential $\Qd_{-1}$ as follows
\begin{equation*}
\Qd_{-1}=\Qd_{-1,-1}+\Qd_{-1,0},\quad \deg_{u^i}\Qd_{-1,k} = k,
\end{equation*}
where
\[
\Qd_{-1,-1} = (-\ql+u^i)f^i\qd\hat d_i,\quad \Qd_{-1,0} = \sum_{j\neq i}(-\ql+u^j)f^j\qd\hat d_j.
\]
Similar to our construction of $^1E$, we filtrate $\hm{A}[\ql]\{\qd u^{i,s},\qd\qth_i^{s+1}\mid s\geq 0\}$ with $\deg_{u^i}+\deg_{\qth}$, and construct the associated spectral sequence $^2E$. Then we have
\[(^2E_0,d_0) = (\hm{A}[\ql]\{\qd u^{i,s},\qd\qth_i^{s+1}\mid s\geq 0\},\Qd_{-1,-1})\]
and $^2E_1 = H(^2E_0,d_0)$, with differential $d_1 = \Qd_{-1,0}$. Since on the $^2E_2$ page the differential becomes 0, hence this spectral sequence becomes convergent on this page, i.e.
\[H(\hm{A}[\ql]\{\qd u^{i,s},\qd\qth_i^{s+1}\mid s\geq 0\}, \Qd_{-1}) \cong{^2E_2}.\]

Let us compute $^2E_1 = H(^2E_0,d_0)$. Take any element
\begin{equation}
\label{s5-t1}
\qo = \sum_{s\geq 0}p_s\qd u^{i,s}+q_s\qd\qth_i^{s+1}\in {^2E_0},
\end{equation}
where we assume that $\qo$ is homogeneous of degree $p$ with respect to the $\qth$-degree. Then it is easy to see that
\begin{equation*}
d_0\qo = (-\ql+u^i)f^i\sum_{s\geq 0}\hat d_i(p_s)\qd u^{i,s}+\left(\hat d_i(q_s)+(-1)^pp_s\right)\qd\qth_i^{s+1}.
\end{equation*}
So if $d_0\qo = 0$, we have $p_s = (-1)^{p+1}\hat d_i(q_s)$, and we can write
\begin{equation*}
\qo = \qd\hat d_i\biggl(\sum_{s\geq 0}(-1)^{p+1}q_s\qd u^{i,s}\biggr).
\end{equation*}
Let us denote \[\hm B_i = \hm A\{\qd u^{i,s},\ s\geq 0\},\] then the above computation shows that $\ker d_0\subset \qd\hat d_i(\hm B_i)[\ql]$. The inverse inclusion $\qd\hat d_i(\hm B_i)[\ql]\subset \ker d_0$ is clear, hence we have $\ker d_0=\qd\hat d_i(\hm B_i)[\ql]$. 

To compute the image of the differential $d_0$, we take another element $\qo\in{^2E_0}$ represented in the form of \eqref{s5-t1}. Observe that
\begin{align*}
d_0\qo &= (-\ql+u^i)f^i\sum_{s\geq 0}\hat d_i(p_s)\qd u^{i,s}+\left(\hat d_i(q_s)+(-1)^pp_s\right)\qd\qth_i^{s+1}\\
&=(-\ql+u^i)f^i\qd\hat d_i\sum_{s\geq 0}\left(p_s+(-1)^p\hat d_i(q_s)\right)\qd u^{i,s},
\end{align*}
we see that $\ima d_0 = (-\ql+u^i)\qd\hat d_i(\hm B_i)[\ql]$. Therefore we obtain the following lemma.
\begin{Lem} We have
\begin{equation*}
^2E_1 = \frac{\qd\hat d_i(\hm B_i)[\ql]}{(-\ql+u^i)\qd\hat d_i(\hm B_i)[\ql]}.
\end{equation*}
\end{Lem}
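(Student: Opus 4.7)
The plan is to read off the lemma as an immediate corollary of the kernel/image computations performed in the paragraphs preceding the statement. Those calculations effectively identify $\ker d_0 = \qd\hat d_i(\hm B_i)[\ql]$ and $\ima d_0 = (-\ql+u^i)\qd\hat d_i(\hm B_i)[\ql]$, so the quotient formula of the lemma follows directly from the definition of $^2E_1$ as $\ker d_0/\ima d_0$.

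To package the two computations cleanly, I would first write any $\qo\in{}^2E_0$ of homogeneous $\qth$-degree $p$ uniquely in the form $\qo = \sum_{s\geq 0}p_s\qd u^{i,s}+q_s\qd\qth_i^{s+1}$ with $p_s\in\hm A^p$ and $q_s\in\hm A^{p-1}$, and then use the derivation property of $\qd\hat d_i$ (a derivation of $\qth$-degree $1$, with $\qd\hat d_i\circ\qd\hat d_i=0$) to rewrite
\[
d_0\qo = (-\ql+u^i)f^i\sum_{s\geq 0}\bigl(\hat d_i(p_s)\qd u^{i,s} + (\hat d_i(q_s)+(-1)^p p_s)\qd\qth_i^{s+1}\bigr).
\]
The key arithmetic input is that $(-\ql+u^i)f^i$ is a non-zero-divisor in $\hm A[\ql]$ (since $f^i$ is nowhere vanishing and $\ql$ is a free polynomial variable), so the vanishing of $d_0\qo$ is equivalent to the vanishing of both bracketed coefficients. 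The $\qd\qth_i^{s+1}$-coefficient equation yields $p_s=(-1)^{p+1}\hat d_i(q_s)$, and the $\qd u^{i,s}$-coefficient equation then follows automatically from $\hat d_i\circ\hat d_i=0$. Rewriting $\qo = \qd\hat d_i\!\left((-1)^{p+1}\sum_s q_s\,\qd u^{i,s}\right)$ establishes $\ker d_0\subseteq \qd\hat d_i(\hm B_i)[\ql]$, while the reverse inclusion is immediate from $d_0\circ\qd\hat d_i=0$.

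For the image, I would recollect the terms of $d_0\qo$ as
\[
d_0\qo = (-\ql+u^i)f^i\,\qd\hat d_i\Bigl(\textstyle\sum_{s\geq 0}\bigl(p_s+(-1)^p\hat d_i(q_s)\bigr)\qd u^{i,s}\Bigr),
\]
which a short Leibniz computation verifies, using $\hat d_i\circ\hat d_i=0$ to match the $\qd u^{i,s}$-coefficient. Conversely, any element of $(-\ql+u^i)\qd\hat d_i(\hm B_i)[\ql]$ is $d_0$-exact because $\qd\hat d_i$ is $C^\infty$-linear and the non-vanishing smooth function $f^i$ is invertible, so the scalar $f^i$ can be absorbed into the argument of $\qd\hat d_i$. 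Combining the two inclusions yields $\ima d_0 = (-\ql+u^i)\qd\hat d_i(\hm B_i)[\ql]$, and the lemma follows.

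The only real obstacle I anticipate is sign bookkeeping in the Leibniz rule for the odd-degree derivation $\qd\hat d_i$, in particular keeping straight its action on the jet coordinates $u^{i,s}, \qth_i^{s+1}$ versus on the one-form generators $\qd u^{i,s}, \qd\qth_i^{s+1}$; once the signs are set up consistently the identification of both the kernel and image is a direct computation, and the statement of the lemma is a purely formal quotient.
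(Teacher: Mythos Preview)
Your proposal is correct and follows essentially the same route as the paper: the paper's argument, appearing in the paragraphs just before the lemma, computes $d_0\qo$ exactly as you do, reads off $\ker d_0=\qd\hat d_i(\hm B_i)[\ql]$ from the $\qd\qth_i^{s+1}$-coefficients, and then rewrites $d_0\qo$ as $(-\ql+u^i)f^i\,\qd\hat d_i(\cdots)$ to identify the image. Your additional remarks (that $(-\ql+u^i)f^i$ is a non-zero-divisor, and that the invertible smooth factor $f^i$ can be absorbed) simply make explicit two points the paper leaves implicit.
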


Next let us compute $^2E_2 = H(^2E_1,d_1)$. We use the following identification:
\begin{equation*}
^2E_1 = \frac{\qd\hat d_i(\hm B_i)[\ql]}{(-\ql+u^i)\qd\hat d_i(\hm B_i)[\ql]}\cong \qd\hat d_i(\hm B_i)
\end{equation*}
by identifying $\ql$ with $u^i$. Under this identification, we have
\begin{equation*}
d_1 = \sum_j(u^j-u^i)f^j\qd\hat d_j.
\end{equation*}
Thus we can represent an element of $^2E_1$ as follows:
\begin{equation*}
\qo = \qd\hat d_i \sum_{s\geq 0}p_s\qd u^{i,s} = \sum_{s\geq 0}\hat d_i(p_s)\qd u^{i,s}+(-1)^pp_s\qd\qth_i^{s+1}\in {^2E_1}.
\end{equation*}
If $\qo$ is a cocycle, i.e. $d_1\qo = 0$, by using the fact that $\qd\hat d_i$ commutes with $d_1$ we obtain
\begin{equation}
\label{s5-t2}
\sum_{j}(u^j-u^i)f^j\hat d_j(p_s) = 0.
\end{equation}
Let us consider the following decomposition according to \eqref{decomp2} as follows:
\begin{equation*}
p_s = \hat p_s+\sum_k  p_s^k+p_s^m,
\end{equation*}
where $\hat p_s \in \hm C$, $ p_s^j \in \hm C_j^{nt}$ and $p_s^m\in\hm M$. Then from the equation \eqref{s5-t2} and the obvious fact that $\hat d_j$ annihilates $p_s^k$ unless $k=j$, we arrive at the following equations:
\begin{equation}
\left(u^k-u^i\right)f^k\hat d_k(p_s^k) = 0,\quad \sum_{j}(u^j-u^i)f^j\hat d_j(p_s^m) = 0.\label{ukuifk}
\end{equation}
Therefore for $k\neq i$, we have $\hat d_k(p_s^k) = 0$ and by Poincar\'e Lemma (Lemma 12 in \cite{carlet2018deformations}), this implies $p_s^k = \hat d_k(q_s^k)$ for some $q_s^k\in\hm C_k^{nt}$. For the term $p_s^m\in\hm M$, after a rescaling by a non-zero factor, we may rewrite the second equation of \eqref{ukuifk} as $\sum_{j\neq i}\hat d_j(p_s^m) = 0$, then by Proposition 11 in \cite{carlet2018deformations} (or a simplified version, Lemma 3.4 in \cite{carlet2018central}), we see that $p_s^m = \sum_{j\neq i}\hat d_j(q_s^m)+h_s$, where $q_s^m\in\hm M$ and $h_s\in\hm C_i$. But such $h_s$ must vanish since we have $h_s = p_s^m - \sum_{j\neq i}\hat d_j(q_s^m)\in\hm M$.

To conclude, we can rewrite $p_s$ as
\begin{equation*}
p_s = \hat p_s+p_s^i+\sum_{j\neq i}\hat d_i q_s^j+d_1(q_s^m) = \hat p_s+p_s^i+ d_1\left(\sum_{j\neq i}\frac{q_s^j}{(u^j-u^i)f^j}+q_s^m\right).
\end{equation*}Using again the fact that $\qd\hat d_i$ and $d_1$ commutes, we see that
\begin{align*}
\qo &= \qd\hat d_i (\sum_{s\geq 0}p_s\qd u^{i,s})\\
&= \qd\hat d_i\sum_{s\geq 0}(\hat p_s+p_s^i)\qd u^{i,s}-d_1\qd\hat d_i\sum_{s\geq 0}\sum_{j\neq i}\left(\frac{q_s^j}{(u^j-u^i)f^j}+q_s^m\right)\qd u^{i,s}.
\end{align*}
So we see that for any $\qo \in\ker d_1$, there is $\eta \in \hm C_i\{\qd u^{i,s}\mid s\geq 0\}$ such that $[\qo] = [\qd\hat d_i(\eta)]$ in $^2E_2$. Let us denote $\hm H_i =\hm C_i\{\qd u^{i,s},\ s\geq 0\}$. It is obvious that each element in $\qd\hat d_i(\hm H_i)$ is annihilated by $d_1$ and different elements define different classes in $^2E_2$, hence we arrive at
\begin{equation*}
^2E_2 \cong \qd\hat d_i(\hm H_i).
\end{equation*}

To summarize all the results above, we obtain the following theorem.
\begin{Th}\label{thm-5-6-zh}
The first page $^1E_1$ of the spectral sequence $^1E$ can be described as the direct sum of the following spaces:
\begin{equation*}
^1E_1\cong \hm{C}[\ql]\{\qd\qth\}\oplus\bigoplus_{i=1}^n\frac{\hat d_i(\hm C_i[\ql])}{(-\ql+u^i)\hat d_i(\hm C_i[\ql])}\{\qd\qth\}\oplus\bigoplus_{i=1}^n\qd\hat d_i(\hm H_i).
\end{equation*}
\end{Th}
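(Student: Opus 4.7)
The plan is to assemble the step-by-step calculations already set up in this subsection into a direct-sum description of $^1E_1 = H(\Qo[\ql],\Qd_{-1})$. The key structural input is the splitting
\[
\Qo[\ql] = \hm{A}[\ql]\{\qd\qth_i\}\oplus\bigoplus_{i=1}^n\hm{A}[\ql]\{\qd u^{i,s},\qd\qth_i^{s+1}\mid s\geq 0\}
\]
into $\Qd_{-1}$-subcomplexes, which follows from the explicit formula $\Qd_{-1}=\sum_i(-\ql+u^i)f^i\qd\hat d_i$ together with the fact that $\qd\hat d_i$ sends $\qd u^{i,s}\mapsto \qd\qth_i^{s+1}$ and annihilates the variables appearing in the other summands. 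Cohomology then splits as a direct sum over these $n+1$ subcomplexes.

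For the summand $\hm A[\ql]\{\qd\qth_i\}$, the differential touches only the $\hm A[\ql]$-factor as $\sum_i(-\ql+u^i)f^i\hat d_i$, so I reduce to computing $H(\hm A[\ql],\Qd_{-1})\{\qd\qth\}$. This is exactly the calculation of \cite{carlet2018deformations}, which via the decomposition $\hm A=\hm C\oplus\bigoplus_k\hm C_k^{nt}\oplus\hm M$ and the relevant Poincar\'e lemmas produces the first two summands $\hm C[\ql]\{\qd\qth\}\oplus\bigoplus_{i=1}^n\frac{\hat d_i(\hm C_i[\ql])}{(-\ql+u^i)\hat d_i(\hm C_i[\ql])}\{\qd\qth\}$.

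For each of the remaining $n$ summands I would introduce the auxiliary spectral sequence $^2E$ obtained by filtering with $\deg_{u^i}+\deg_\qth$. On $^2E_0$ the differential is $\Qd_{-1,-1}=(-\ql+u^i)f^i\qd\hat d_i$; writing $\qo=\sum_s p_s\qd u^{i,s}+q_s\qd\qth_i^{s+1}$ and imposing $\Qd_{-1,-1}\qo=0$ forces $p_s=(-1)^{p+1}\hat d_i(q_s)$, so $\ker=\qd\hat d_i(\hm B_i)[\ql]$ and $\ima=(-\ql+u^i)\qd\hat d_i(\hm B_i)[\ql]$, giving $^2E_1\cong\qd\hat d_i(\hm B_i)$ under the identification $\ql=u^i$. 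Under this identification the induced differential is $d_1=\sum_j(u^j-u^i)f^j\qd\hat d_j$, and a cocycle $\qd\hat d_i\sum_s p_s\qd u^{i,s}$ satisfies $\sum_j(u^j-u^i)f^j\hat d_j(p_s)=0$. Decomposing $p_s=\hat p_s+\sum_k p_s^k+p_s^m$ via \eqref{decomp2} and applying the Poincar\'e lemma and Proposition 11 of \cite{carlet2018deformations} to each component in turn, I would cancel the $\hm C_k^{nt}$-parts for $k\neq i$ and the $\hm M$-part of $p_s$ modulo coboundaries, producing a representative in $\hm C_i\{\qd u^{i,s}\}=\hm H_i$. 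This identifies $^2E_2\cong\qd\hat d_i(\hm H_i)$, and since $d_2$ vanishes on bidegree grounds $^2E$ converges here to the desired last summand.

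The main technical obstacle is handling the mixed term $p_s^m\in\hm M$: after applying Proposition 11 of \cite{carlet2018deformations} to get $p_s^m=\sum_{j\neq i}\hat d_j(q_s^m)+h_s$ with $h_s\in\hm C_i$, one must observe that $h_s=p_s^m-\sum_{j\neq i}\hat d_j(q_s^m)\in\hm M$, forcing $h_s\in\hm M\cap\hm C_i=0$. A secondary bookkeeping point is to verify that the identification $\ql=u^i$ respects both $\deg_u$ and $\deg_\qth$ so that the isomorphism in the theorem is graded in the intended bidegrees.
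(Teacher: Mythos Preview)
Your proposal is correct and follows essentially the same route as the paper: the paper's Theorem~\ref{thm-5-6-zh} is stated as a summary of the preceding lemmas in that subsection, and your outline reproduces precisely those steps --- the $\Qd_{-1}$-stable splitting of $\Qo[\ql]$, the reduction of the $\hm A[\ql]\{\qd\qth\}$ piece to the computation in \cite{carlet2018deformations}, the auxiliary spectral sequence $^2E$ with its two pages, and the handling of the mixed term $p_s^m$ via the observation $h_s\in\hm M\cap\hm C_i=0$. Your remark that $d_2=0$ on bidegree grounds and the grading compatibility of the identification $\ql=u^i$ are exactly the bookkeeping points the paper uses implicitly.
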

\subsection{Computation of $^1E_2$.}
We will find suitable bidegrees $(p,d)$ such that the cohomology $^1E_2 = H^p_d(^1E_1,\Qd_0) = 0$, this means that the spectral sequence $^1E$ collapses on the second page for these bidegrees. Then we conclude that for the same bidegrees $H^p_d(\Qo[\ql],\qp_\ql) = 0$. 

We first write down explicitly the formula for $\Qd_0$. To avoid lengthy expressions, we will split $\Qd_0$ into $\qp/\qp u^{i,s}$ part, $\qp/\qp \qd u^{i,s}$ part, $\qp/\qp \qth_i^s$ part and $\qp/\qp \qd\qth_i^s$ part. In the following formulae, the index $i$ is fixed and does not participate in the summation. These formulae are comparable to those given in \cite{carlet2018deformations}.

The $\qp/\qp u^{i,s}$ part of $\Qd_0$ reads
\begin{align*}
&\sum_{s\geq 1}\sum_{t=1}^s\binom{s}{t}u^{i,t}f^i\qth_i^{s-t+1}\diff{}{u^{i,s}}
+\sum_{s\geq 1;j}\sum_{t=1}^s\binom{s}{t}(-\ql+u^i)\aij ji u^{j,t}\qth_i^{s-t+1}\diff{}{u^{i,s}}\\
+&\frac 12\sum_{s\geq 1}\sum_{t=0}^s\binom{s}{t}u^{i,t+1}f^i\qth_i^{s-t}\diff{}{u^{i,s}}
+\frac 12\sum_{s\geq 1;j}\sum_{t=0}^s\binom{s}{t}(-\ql+u^i)\aij ji u^{j,t+1}\qth_i^{s-t}\diff{}{u^{i,s}}\\
+&\frac 12\sum_{s\geq 1;j}\sum_{t=0}^s\binom{s}{t}(-\ql+u^i)\bij ij u^{j,t+1}\qth_j^{s-t}\diff{}{u^{i,s}}\\
-&\frac 12 \sum_{s\geq 1;j}\sum_{t=0}^s\binom{s}{t}(-\ql+u^j)\bij ji u^{i,t+1}\qth_j^{s-t}\diff{}{u^{i,s}}+(\ql+u^i)f^i\qth_i^1\diff{}{u^i}.
\end{align*}
The $\qp/\qp \qd u^{i,s}$ part part of $\Qd_0$ reads:
\begin{align*}
&\sum_{s\geq 0}\sum_{t=0}^s\binom{s}{t}f^i\qth_i^{s-t+1}\qd u^{i,t}\diff{}{\qd u^{i,s}}
+\sum_{s\geq 0;j}\sum_{t=0}^s\binom{s}{t}(-\ql+u^i)\aij ji \qth_i^{s-t+1}\qd u^{j,t}\diff{}{\qd u^{i,s}}\\
+&\sum_{s\geq 1}\sum_{t=1}^s\binom{s}{t}f^iu^{i,t}\qd\qth_i^{s-t+1}\diff{}{\qd u^{i,s}}
+\sum_{s\geq 1;j}\sum_{t=1}^s\binom{s}{t}(-\ql+u^i)\aij ji u^{j,t}\qd\qth_i^{s-t+1}\diff{}{\qd u^{i,s}}\\
+&\frac 12\sum_{s\geq 0}\sum_{t=0}^s\binom{s}{t}f^i\qd(u^{i,t+1}\qth_i^{s-t})\diff{}{\qd u^{i,s}}
+\frac 12\sum_{s\geq 0;j}\sum_{t=0}^s\binom{s}{t}(-\ql+u^i)\aij ji \qd(u^{j,t+1}\qth_i^{s-t})\diff{}{\qd u^{i,s}}\\
+&\frac 12\sum_{s\geq 1;j}\sum_{t=0}^s\binom{s}{t}(-\ql+u^i)\bij ij \qd(u^{j,t+1}\qth_j^{s-t})\diff{}{\qd u^{i,s}}\\
-&\frac 12 \sum_{s\geq 1;j}\sum_{t=0}^s\binom{s}{t}(-\ql+u^j)\bij ji \qd(u^{i,t+1}\qth_j^{s-t})\diff{}{\qd u^{i,s}}.
\end{align*}
The $\qp/\qp \qth_i^s$ part part of $\Qd_0$ reads
\begin{align*}
&\frac 12 \sum_{s\geq 0}\sum_{t=0}^s\binom{s}{t}f^i\qth_i^t\qth_i^{1+s-t}\diff{}{\qth_i^s}+\frac 12\sum_{s\geq 0;j}\sum_{t=0}^s\binom{s}{t}(-\ql+u^j)\aij ij \qth_j^t\qth_j^{1+s-t}\diff{}{\qth_i^s}\\
+&\frac 12 \sum_{s\geq 0;j}\sum_{t=0}^s\binom{s}{t}(-\ql+u^j)\bij ji \qth_i^t\qth_j^{1+s-t}\diff{}{\qth_i^s}\\
-&\frac 12\sum_{s\geq 0;j}\sum_{t=0}^s\binom{s}{t}(-\ql+u^j)\bij ji \qth_j^t\qth_i^{1+s-t}\diff{}{\qth_i^s}.
\end{align*}
The $\qp/\qp \qd\qth_i^s$ part part of $\Qd_0$ reads
\begin{align*}
&\frac 12 \sum_{s\geq 0}\sum_{t=0}^s\binom{s}{t}f^i\qd(\qth_i^t\qth_i^{1+s-t})\diff{}{\qd\qth_i^s}+\frac 12\sum_{s\geq 0;j}\sum_{t=0}^s\binom{s}{t}(-\ql+u^j)\aij ij \qd(\qth_j^t\qth_j^{1+s-t})\diff{}{\qd\qth_i^s}\\
+&\frac 12 \sum_{s\geq 0;j}\sum_{t=0}^s\binom{s}{t}(-\ql+u^j)\bij ji \qd(\qth_i^t\qth_j^{1+s-t})\diff{}{\qd\qth_i^s}\\
-&\frac 12\sum_{s\geq 0;j}\sum_{t=0}^s\binom{s}{t}(-\ql+u^j)\bij ji \qd(\qth_j^t\qth_i^{1+s-t})\diff{}{\qd\qth_i^s}.
\end{align*}

To compute the cohomology $H({^1E_1},\Qd_0)$, we introduce a third spectral sequence $^3E$ by defining the $\qth^1$-degree
$\deg_{\qth^1}\qth_i^1 = 1$
for $i=1,\cdots, n$, and other generators have $\deg_{\qth^1}$ zero. Then we filtrate ${^1E_1}$ via
\[F^r({^1E_1}) = \{\deg_{\qth_1}-\deg_\qth \qo\leq -r\mid\qo\in {^1E_1}\}.\]
We also have the decomposition
\begin{equation*}
\Qd_0 = \Qd_{0,1}+\Qd_{0,0}+\Qd_{0,-1},\quad \deg_{\qth^1}\Qd_{0,k} = k.
\end{equation*}
The zeroth page  $(^3E_0,d_0)$ is given by $(^1E_1,\Qd_{0,1})$. We want to determine the bidegrees $(p,d)$ such that $H^p_d(^3E_0,d_0) = 0$, then we conclude that for the same degrees $H^p_d(^1E_1,\Qd_0) = 0$. We first write down the explicit formula for $\Qd_{0,1}$.

The $\qp/\qp u^{i,s}$ part of $\Qd_{0,1}$ reads
\begin{align*}
&(-\ql+u^i)f^i\qth_i^1\diff{}{u^i}+\sum_{s\geq 1}(\frac s2+1)f^i\qth_i^1u^{i,s}\diff{}{u^{i,s}}+\sum_{s\geq 1;j}(\frac s2+1)(-\ql+u^i)\aij ji \qth_i^1u^{j,s}\diff{}{u^{i,s}}\\
+&\frac 12 \sum_{s\geq 1;j}s(-\ql+u^i)\bij ij \qth_j^1u^{j,s}\diff{}{u^{i,s}}-\frac 12 \sum_{s\geq 1;j}s(-\ql+u^j)\bij ji \qth_j^1u^{i,s}\diff{}{u^{i,s}}.
\end{align*}
The $\qp/\qp \qd u^{i,s}$ part of $\Qd_{0,1}$ reads
\begin{align*}
&\sum_{s\geq 0}f^i\qth_i^1\qd u^{i,s}\diff{}{\qd u^{i,s}}+\sum_{s\geq 0;j}(-\ql+u^i)\aij ji \qth_i^1\qd u^{j,s}\diff{}{u^{i,s}}\\
+&\frac 12\sum_{s\geq 0;j}s(-\ql+u^i)\aij ji \qth_i^1\qd u^{j,s}\diff{}{\qd u^{i,s}}+\frac 12 \sum_{s\geq 0}sf^i\qth_i^1\qd u^{i,s}\diff{}{\qd u^{i,s}}\\
+&\frac 12 \sum_{s\geq 0;j}s(-\ql+u^i)\bij ij \qth_j^1\qd u^{j,s}\diff{}{\qd u^{i,s}}-\frac 12 \sum_{s\geq 0;j}s(-\ql+u^j)\bij ji\qth_j^1\qd u^{i,s}\diff{}{\qd u^{i,s}}.
\end{align*}
The $\qp/\qp \qth_i^s$ part part of $\Qd_{0,1}$
\begin{align*}
&\frac 12\sum_{s\geq 0;j}(-\ql+u^j)\aij ij (s-1)\qth_j^1\qth_j^s\diff{}{\qth_i^s}+\frac 12\sum_{s\geq 0}f^i(s-1)\qth_i^1\qth_i^s\diff{}{\qth_i^s}\\
+&\frac 12 \sum_{\substack{s\geq 0;j\\ s\neq 1}}(\ql+u^j)\bij ji (s+1)(\qth_i^s\qth_j^1-\qth_j^s\qth_i^1)\diff{}{\qth_i^s}+\sum_j(-\ql+u^j)\bij ji \qth_i^1\qth_j^1\diff{}{\qth_i^1}
\end{align*}
The $\qp/\qp \qd\qth_i^s$ part part of $\Qd_0$
\begin{align*}
&\frac 12\sum_{s\geq 0;j}(-\ql+u^j)\aij ij (s-1)\qth_j^1\qd\qth_j^s\diff{}{\qd\qth_i^s}+\frac 12\sum_{s\geq 0}f^i(s-1)\qth_i^1\qd\qth_i^s\diff{}{\qth_i^s}\\
+&\frac 12 \sum_{\substack{s\geq 0;j\\ s\neq 1}}(\ql+u^j)\bij ji (s+1)(\qd\qth_i^s\qth_j^1-\qd\qth_j^s\qth_i^1)\diff{}{\qth_i^s}+\sum_j(-\ql+u^j)\bij ji \qd(\qth_i^1\qth_j^1)\diff{}{\qth_i^1}
\end{align*}

To simplify the above expressions, we perform a rescaling on the generators of $\Qo$ (see \cite{carlet2018central}) as follows:
\begin{align*}
&\Psi\colon u^{i,s}\mapsto (f^i)^{\frac s2}u^{i,s};\quad \qth_i^s\mapsto (f^i)^{\frac{s+1}{2}}\qth_i^s;\ s\geq 0;\\
&\Psi\colon \qd u^{i,s}\mapsto (f^i)^{\frac s2}\qd u^{i,s};\quad \qd\qth_i^s\mapsto (f^i)^{\frac{s+1}{2}}\qd\qth_i^s;\ s\geq 0.
\end{align*}
Note that this is NOT induced by a change of coordinate, but just an isomorphism of the space $\Qo$. The expression of $\Qd_{0,1}$ will be simplified after being conjugated by $\Psi$, and since $\Psi$ leaves all the decomposition of the complex invariant, this will not affect the computation of cohomology groups.

The following identities are easy to verify and helpful for our computation of the conjugated operator $\tilde\Qd_{0,1}=\Psi^{-1}\Qd_{0,1}\Psi$:
\begin{align*}
&\Psi^{-1}u^{i,s}\Psi = (f^i)^{-\frac s2}u^{i,s};\quad \Psi^{-1}\qth_i^s\Psi = (f^i)^{-\frac{s+1}{2}}\qth_i^s;\\
&\Psi^{-1}\qd u^{i,s}\Psi = (f^i)^{-\frac s2}u^{i,s};\quad \Psi^{-1}\qd\qth_i^s\Psi = (f^i)^{-\frac{s+1}{2}}\qth_i^s;\\
&\Psi^{-1}\diff{}{u^{i,s}}\Psi = (f^i)^{\frac s2}\diff{}{u^{i,s}}, s\geq 1;\quad \Psi^{-1}\diff{}{\qth_i^s}\Psi = (f^i)^{\frac{s+1}{2}}\diff{}{\qth_i^s}, s\geq 0;\\
&\Psi^{-1}\diff{}{\qd u^{i,s}}\Psi = (f^i)^{\frac s2}\diff{}{\qd u^{i,s}}, s\geq 0;\quad \Psi^{-1}\diff{}{\qd\qth_i^s}\Psi = (f^i)^{\frac{s+1}{2}}\diff{}{\qd\qth_i^s}, s\geq 0;\\
&\Psi^{-1}\diff{}{u^{i}}\Psi =\diff{}{u^i}+\sum_{s\geq 0;j}\frac{\aij ij}{f^j}\left(\frac s2 u^{j,s}\diff{}{u^{j,s}}+\frac{s+1}{2}\qth_j^s\diff{}{\qth_j^s}\right).
\end{align*}
Then by using the rotation coefficients 
\begin{equation*}
\qg_{ij} = -\frac 12 \left(\frac{f^i}{f^j}\right)^{1/2}\frac{\aij ij}{f^j}
\end{equation*}
defined for the diagonal metric $(f^1,\cdots,f^n)$,
we can represent $\tilde \Qd_{0,1}$ in the form
\[ \tilde \Qd_{0,1}= \phi_1+\phi_2+\phi_3,\]
where
\begin{align*}
\phi_1 =& -\sum_{s\geq 1;i,j}(-\ql+u^i)\left(\frac{f^i}{f^j}\right)^{\frac{s+1}{2}}\left((s+2)\qg_{ji}\qth_i^1+s\qg_{ij}\qth_j^1\right)u^{j,s}\diff{}{u^{i,s}}\\
&-\sum_{s\geq 0;i,j}(-\ql+u^i)\left(\frac{f^i}{f^j}\right)^{\frac{s+1}{2}}\left((s+2)\qg_{ji}\qth_i^1+s\qg_{ij}\qth_j^1\right)\qd u^{j,s}\diff{}{\qd u^{i,s}}\\
&+\sum_{s\geq 2;i,j}(-\ql+u^j)\left(\frac{f^i}{f^j}\right)^{\frac{s}{2}}\left((1-s)\qg_{ij}\qth_j^1-(1+s)\qg_{ji}\qth_i^1\right)\qth_j^s\diff{}{\qth_i^s},\\
\phi_2=& \sum_{s\geq 2;i,j}(-\ql+u^j)\left(\frac{f^i}{f^j}\right)^{\frac{s}{2}}\left((1-s)\qg_{ij}\qth_j^1-(1+s)\qg_{ji}\qth_i^1\right)\qd\qth_j^s\diff{}{\qd\qth_i^s}\\
&+\sum_{i,j}(-\ql+u^j)\frac{\qp_jf^i}{f^j}\qth_i^1\qd\qth_j^1\diff{}{\qd\qth_i^1},\\
\phi_3 =& -\frac 12\sum_{s\geq 0;i,j}(-\ql+u^j)\frac{\qp_jf^i}{f^j}\qth_j^1\left(s\qd u^{i,s}\diff{}{\qd u^{i,s}}+(s+1)\qd\qth_i^s\diff{}{\qd\qth_i^s}\right)\\
&+\sum_{i,j}(-\ql+u^j)(\qg_{ij}\qth_j^1-\qg{ji}\qth_i^1)\left(\qth_j\diff{}{\qth_i}+\qd\qth_j\diff{}{\qd\qth_i}\right)\\
&+\sum_i\qth_i^1\mathcal E_i+\sum_i(-\ql+u^i)\qth_i^1\diff{}{u^i},
\end{align*}
here $\mathcal E_i$ is an Euler-type vector filed given by
\begin{equation*}
\mathcal E_i = \sum_{s\geq 1}\left(\frac s2+1\right)u^{i,s}\diff{}{u^{i,s}}+\sum_{s\geq 0}\left(\frac s2+1\right)\qd u^{i,s}\diff{}{\qd u^{i,s}}+\sum_{s \geq 0}\frac{s-1}{2}\left(\qth_i^s\diff{}{\qth_i^s}+\qd\qth_i^s\diff{}{\qd\qth_i^s}\right).
\end{equation*}

We first simplify $\tilde \Qd_{0,1}$ by the following observation (which is parallel to Lemma 3.6 of \cite{carlet2018central}).
\begin{Lem}
Both $\phi_1$ and $\phi_2$ act trivially on $^1E_1$.
\end{Lem}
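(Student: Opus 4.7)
The plan is to verify the claim directly on each of the three summands of $^1E_1$ exhibited in Theorem \ref{thm-5-6-zh}, using the fact that the explicit representatives given there are supported only on specific jet variables. I first observe that after the rescaling by $\Psi$, every summand of $\phi_1$ contains one of the derivations $\partial/\partial u^{i,s}$ with $s\ge 1$, $\partial/\partial(\qd u^{i,s})$ with $s\ge 0$, or $\partial/\partial \qth_i^s$ with $s\ge 2$; similarly, every summand of $\phi_2$ contains $\partial/\partial(\qd \qth_i^s)$ with $s\ge 1$. This is the observation that drives the whole argument.

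A representative of the first summand $\hm C[\ql]\{\qd\qth\}$ has the form $c\,\qd\qth_k$ with $c\in C^\infty(u)[\ql][\qth_j,\qth_j^1]$; none of the derivations listed above act nontrivially on such an expression, so $\phi_1$ and $\phi_2$ both annihilate this summand outright. A representative of the second summand is $\hat d_{i_0}(q)\,\qd\qth_k$ with $q\in\hm C_{i_0}[\ql]$; since its form part is still $\qd\qth_k^0$, $\phi_2$ again vanishes. For $\phi_1$ a computation is needed: the derivations see only the variables $u^{i_0,s}$ and $\qth_{i_0}^s$ inside $\hat d_{i_0}(q)$, while the multiplication factors $u^{j,s}$ and $\qth_j^s$ in $\phi_1$ combine with $\hat d_{i_0}(q)$ either to yield an element still lying in $\hat d_{i_0}(\hm A[\ql])$, via the identity $u^{j,s}\partial_{u^{i_0,s}}\hat d_{i_0}(q)=\hat d_{i_0}(u^{j,s}\partial_{u^{i_0,s}}q)$ valid for $j\ne i_0$, or, in the diagonal case $j=i_0$, a multiple of $(-\ql+u^{i_0})$ that dies in the quotient defining the second summand.

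For the third summand $\qd\hat d_{i_0}(\hm H_{i_0})$, both $\phi_1$ and $\phi_2$ may act nontrivially because its representatives involve both $u^{i_0,s}$ and the form variables $\qd u^{i_0,s},\qd \qth_{i_0}^{s+1}$. Here I would exploit the fact that $\qd\hat d_{i_0}$ is the Lie derivative of the vector field $\hat d_{i_0}$, so it supercommutes with the multiplication-by-$u^{j,s},\qd u^{j,s},\qth_j^s,\qd\qth_j^s$ operators appearing in $\phi_1$ and $\phi_2$ modulo $\Qd_{-1}$-boundaries. Pulling $\qd\hat d_{i_0}$ out of the expression and then invoking the Poincar\'e-type lemma used to compute $^2E_2$ in Section \ref{subsec-5-2-zh} (as in \cite{carlet2018deformations,carlet2018central}) would show that the resulting classes vanish in $^1E_1$. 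The main obstacle is the bookkeeping in this third case, where one must carefully manage the interplay between the $u^{i,s}$-derivative and $\qd u^{i,s}$-derivative pieces of $\phi_1$, and similarly the $\qd\qth_{i_0}^{s+1}$ piece of $\phi_2$, using the rotation-coefficient identities for the diagonal metric $(f^1,\dots,f^n)$ to cancel the cross terms. Everything should reduce to a direct computation analogous to Lemma~3.6 of \cite{carlet2018central}, adapted to the presence of the extra form factors $\qd u^{i,s}$ and $\qd\qth_i^s$.
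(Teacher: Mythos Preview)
Your treatment of the first two summands is fine and matches the paper, which simply cites the analogous Lemma~3.6 of \cite{carlet2018central} for the $\hm A[\ql]\{\qd\qth\}$ part.

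The real difference is in the third summand $\qd\hat d_{i_0}(\hm H_{i_0})$, and here your plan has a gap. You propose to pull $\qd\hat d_{i_0}$ through the multiplication operators in $\phi_1,\phi_2$ ``modulo $\Qd_{-1}$-boundaries'' and then appeal to rotation-coefficient identities to kill the cross terms. Neither of these steps is justified: $\qd\hat d_{i_0}$ does \emph{not} supercommute with multiplication by $u^{i_0,s}$ or $\qd\qth_{i_0}^{s+1}$, and there is no cancellation mechanism coming from the $\qg_{ij}$ that would make the residual terms exact. The honest bookkeeping you anticipate does not close up into a proof.

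The paper avoids this computation entirely by a structural trick. Since $\Qd_{0,1}\Qd_{-1}+\Qd_{-1}\Qd_{0,1}=0$, the whole expression $(\phi_1+\phi_2+\phi_3)\qo$ is automatically a $\Qd_{-1}$-cocycle whenever $\qo$ is. One then observes that for $\qo\in\qd\hat d_{i_0}(\hm H_{i_0})$ the three pieces land in \emph{disjoint} $\Qd_{-1}$-invariant subspaces: $\phi_1\qo$ in $\bigoplus_{j\ne i_0}\hm C_j^{nt}\{\qd u^{i_0,s},\qd\qth_{i_0}^{s+1}\}$, $\phi_2\qo$ splitting as $\qa_1\in\bigoplus_{j\ne i_0}(-\ql+u^j)\hm C_{i_0}[\ql]\{\qd\qth_j^{s+1}\}$ plus $\qa_2\in(-\ql+u^{i_0})\hm C_{i_0}[\ql]\{\qd\qth_{i_0}^{s+1}\}$, and $\phi_3\qo$ in $\hm C_{i_0}[\ql]\{\qd u^{i_0,s},\qd\qth_{i_0}^{s+1}\}$. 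Disjointness forces each piece to be a $\Qd_{-1}$-cocycle separately. But the computation of $^1E_1$ via the spectral sequence $^2E$ already tells you that cocycles supported on those first three subspaces are cohomologically trivial: terms in $\hm C_j^{nt}$ for $j\ne i_0$ die at $^2E_2$, and anything divisible by $(-\ql+u^j)$ dies at $^2E_1$. Hence $\phi_1\qo$ and $\phi_2\qo$ represent zero without computing a single rotation coefficient. This is the missing idea.
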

\begin{proof}
We first recall from Theorem \ref{thm-5-6-zh} that 
\begin{equation}
\label{decomp3}
^1E_1\cong \hm{C}[\ql]\{\qd\qth\}\oplus\bigoplus_{i=1}^n\frac{\hat d_i(\hm C_i[\ql])}{(-\ql+u^i)\hat d_i(\hm C_i[\ql])}\{\qd\qth\}\oplus\bigoplus_{i=1}^n\qd\hat d_i(\hm H_i).
\end{equation}
The vanishing of the action of $\phi_1$ on the cohomology 
\begin{equation*}
\hm{C}[\ql]\{\qd\qth\}\oplus\bigoplus_{i=1}^N\frac{\hat d_i(\hm C_i[\ql])}{(-\ql+u^i)\hat d_i(\hm C_i[\ql])}\{\qd\qth\}
\end{equation*}
of $\hm A[\ql]\{\qd\qth\}$ is a direct consequence of Lemma 3.6 of \cite{carlet2018central}, and the vanishing of that of $\phi_2$ is obvious. 

Next we consider action of $\phi_1$ and $\phi_2$ on
\[H(\hm A[\ql]\{\qd u^{i,s},\qd\qth_i^{s+1}\},\Qd_{-1})=\qd\hat d_i(\hm H_i)\]
for a fixed index $i$. By identifying $\ql$ with $u^i$, we can represent $\qo$ in the form
\begin{equation*}
\qo = \qd\hat d_i(\sum_{s\geq 0}p_s\qd u^{i,s}) = \sum_{s\geq 0}\hat d_i(p_s)\qd u^{i,s}+(-1)^pp_s\qd\qth_i^{s+1},\quad p_s\in\hm C_i.
\end{equation*} 
Under such an identification, the action of $\phi_1$ can be represented as
\begin{align*}
\phi_1 =& -\sum_{s\geq 1;k,j}(-u^i+u^k)\left(\frac{f^k}{f^j}\right)^{\frac{s+1}{2}}\left((s+2)\qg_{jk}\qth_k^1+s\qg_{kj}\qth_j^1\right)u^{j,s}\diff{}{u^{k,s}}\\
&-\sum_{s\geq 0;k,j}(-u^i+u^k)\left(\frac{f^k}{f^j}\right)^{\frac{s+1}{2}}\left((s+2)\qg_{jk}\qth_i^1+s\qg_{kj}\qth_j^1\right)\qd u^{j,s}\diff{}{\qd u^{k,s}}\\
&+\sum_{s\geq 2;k,j}(-u^i+u^j)\left(\frac{f^k}{f^j}\right)^{\frac{s}{2}}\left((1-s)\qg_{kj}\qth_j^1-(1+s)\qg_{jk}\qth_k^1\right)\qth_j^s\diff{}{\qth_k^s}.
\end{align*}
Hence it is clear that 
\[\phi_1\qo \in\bigoplus_{j\neq i}\hm C_j^{nt}\{\qd u^{i,s},\qd\qth_i^{s+1}\mid s\geq 0\}.\] 
For the action of $\phi_2$ on $\qo$, we first regard $\qo$ as an element of $\qd \hat d_i(\hm H_i)[\ql]$, then we observe that 
\[\phi_2\qo\in\bigoplus_{j} (-\ql+u^j)\hm C_i[\ql]\{\qd \qth_j^{s+1}\mid s\geq 0\}.\] We further make the decomposition $\phi_2\qo=\qa_1+\qa_2$, where 
\[\qa_1\in\bigoplus_{j\neq i} (-\ql+u^j)\hm C_i[\ql]\{\qd \qth_j^{s+1}\mid s\geq 0\},\quad 
\qa_2\in (-\ql+u^i)\hm C_i[\ql]\{\qd \qth_i^{s+1}\mid s\geq 0\}.\] 
Finally it is easy to see that from the definition of $\phi_3$ that 
\[\phi_3\qo\in\hm C_i[\ql]\{\qd u^{i,s},\qd\qth_i^{s+1}\mid  s\geq 0\}.\] 

 It follows from
\begin{equation*}
\Qd_{0,1}\Qd_{-1}+\Qd_{-1}\Qd_{0,1} = 0
\end{equation*}
that $\tilde \Qd_{0,1}\qo=(\phi_1+\phi_2+\phi_3)\qo$ still lies in the cohomology group. Since the subspaces
\begin{align*}
&\bigoplus_{j\neq i}\hm C_j^{nt}\{\qd u^{i,s},\qd\qth_i^{s+1}\mid s\geq 0\},\\ &\bigoplus_{j\neq i} (-\ql+u^j)\hm C_i[\ql]\{\qd \qth_j^{s+1}\mid s\geq 0\},\\ &\hm C_i[\ql]\{\qd u^{i,s},\qd\qth_i^{s+1}\mid s\geq 0\}
\end{align*}
are disjoint and they are all invariant under the action $\Qd_{-1}$, we conclude that $\phi_1\qo$, $\qa_1$ and $\qa_2+\phi_3\qo$ lie in the cohomology group.

From our computation of $^1E_1$ given in \ref{subsec-5-2-zh}, it follows that terms in the subspace 
\[\bigoplus_{j\neq i}\hm C_j^{nt}\{\qd u^{i,s},\qd\qth_i^{s+1}\mid s\geq 0\}\] 
vanish in $^1E_1$ and hence action of $\phi_1$ vanishes. Similarly, from the computation of the spectral sequence $^2E$ given in \ref{subsec-5-2-zh}, it follows that the terms in the subspace 
\[\bigoplus_{j\neq i} (-\ql+u^j)\hm C_i[\ql]\{\qd \qth_j^{s+1}\mid s\geq 0\}\] 
also vanish, hence $\qa_1 = 0$. For the same reason, the multiples of $(-\ql+u^i)$ in the subspace $\hm C_i[\ql]\{\qd u^{i,s},\qd\qth_i^{s+1}\mid s\geq 0\}$ are trivial in the cohomology as well, hence in particular $\qa_2 = 0$ which implies the vanishing of $\phi_2$. The lemma is proved.
\end{proof}

This above lemma shows that $\tilde\Qd_{0,1} = \phi_3$ and therefore each summand in the decomposition \eqref{decomp3} of $^1E_1$ 
is preserved by $\tilde \Qd_{0,1}$. In what follows, we will show that for some bidegrees $(p,d)$ the action of $\tilde\Qd_{0,1}$ is acyclic on each summand when restricted to the elements with super degree $p$ and differential degree $d$.
\begin{Lem}
\label{s5-t3}
We have $H^p_d(\hm{C}[\ql]\{\qd\qth\},\tilde\Qd_{0,1}) = 0$ unless
\begin{equation*}
d = 0,\cdots,n;\quad p = d+1,\cdots,d+n+1.
\end{equation*}
\end{Lem}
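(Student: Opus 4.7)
The plan is to observe that Lemma~\ref{s5-t3} is essentially a statement about the support of the $C^\infty(u)[\ql]$-module $\hm{C}[\ql]\{\qd\qth\}$ in the bigrading $(\deg_\qth,\deg_x)$: I expect that the stated $(p,d)$-range is precisely where the module itself is nonzero, whence the cohomology vanishes outside this range for the trivial reason that the cochain group is zero.

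First I would list the generators of $\hm{C}[\ql]\{\qd\qth\}$ with their bidegrees $(\deg_\qth,\deg_x)$: the coordinates $u^i$ and the formal parameter $\ql$ sit in bidegree $(0,0)$; the fermions $\qth_i$ and the 1-form generators $\qd\qth_i$ each carry bidegree $(1,0)$; and $\qth_i^1$ carries $(1,1)$. Since $\qth_i$ and $\qth_i^1$ are anti-commuting, each occurs in any monomial to at most first order, and each 1-form contains exactly one $\qd\qth_l$. Hence a typical nonzero monomial has the form
\[
g(u,\ql)\,\qth^J\,(\qth^1)^K\,\qd\qth_l,\qquad J,K\subseteq\{1,\dots,n\},\ l\in\{1,\dots,n\},
\]
and its bidegree is $(p,d)=(|J|+|K|+1,\,|K|)$.

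Reading off the constraints, a nonzero element of bidegree $(p,d)$ requires $d=|K|\in\{0,\dots,n\}$ and $|J|=p-d-1\in\{0,\dots,n\}$, i.e.\ $p\in\{d+1,\dots,d+n+1\}$. Outside this range the component $(\hm{C}[\ql]\{\qd\qth\})^p_d$ is the zero module, so its $\tilde\Qd_{0,1}$-cohomology vanishes a fortiori.

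The only additional check, which I would carry out briefly using the explicit formula for $\phi_3$, is that $\tilde\Qd_{0,1}$ preserves the subspace $\hm{C}[\ql]\{\qd\qth\}$: all the derivations $\partial/\partial u^{i,s}$, $\partial/\partial\qth_i^s$ for $s\geq 2$ and $\partial/\partial\qd u^{i,s}$, $\partial/\partial\qd\qth_i^s$ for $s\geq 1$ kill this subspace, so only the $s=0$ contributions from the first sum, the $\qth_j\partial/\partial\qth_i$ and $\qd\qth_j\partial/\partial\qd\qth_i$ pieces of the second sum, and the $\qth_i^1\partial/\partial u^i$ part of the third line survive. Since the lemma asserts vanishing only outside the bidegree range, there is no genuine obstacle here; the real work lies in the subsequent lemmas that establish the corresponding vanishing statements for the remaining two summands of $^1E_1$, where the support of the module is no longer bounded and one must exploit the differential nontrivially.
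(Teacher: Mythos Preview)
Your proof is correct and takes essentially the same approach as the paper: the lemma holds for the trivial reason that the bidegree support of $\hm{C}[\ql]\{\qd\qth\}$ is exactly the stated range, so outside it the cochain groups themselves vanish. Your additional check that $\tilde\Qd_{0,1}$ preserves this summand is harmless but already subsumed by the preceding lemma showing $\phi_1$ and $\phi_2$ act trivially, which is what guarantees each summand of the decomposition of $^1E_1$ is a subcomplex.
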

\begin{proof}
Indeed, possible bidegrees $(p,d)$ of elements of  in $\hm C[\ql]\{\qd\qth\}$ are precisely those excluded in the lemma. The lemma is proved.
\end{proof}

To compute the cohomology of $\tilde \Qd_{0,1}$ on the space
\begin{equation*}
\frac{\hat d_i(\hm C_i[\ql])}{(-\ql+u^i)\hat d_i(\hm C_i[\ql])}\{\qd\qth\},
\end{equation*}
let us first identify this space with $\hat d_i(\hm C_i)\{\qd\qth\}$ by sending $\ql$ to $u^i$. After this identification, the action of $\tilde\Qd_{0,1}$ reads (we keep the same notation)
\begin{align*}
\tilde\Qd_{0,1}=&\sum_j\qth_j^1\mathcal E_j-\frac 12\sum_{j,k}(-u^i+u^j)\frac{\qp_jf^k}{f^j}\qth_j^1\qd\qth_k\diff{}{\qd\qth_k}+\sum_j(-u^i+u^j)f^j\qth_j^1\diff{}{u^j}\\
&+\sum_{j,k}(-u^i+u^j)(\qg_{kj}\qth_j^1-\qg_{jk}\qth_k^1)\left(\qth_j\diff{}{\qth_k}+\qd\qth_j\diff{}{\qd\qth_k}\right).
\end{align*}

\begin{Lem}
\label{lem1}
We have $H^p_d(\hat d_i(\hm{C_i})\{\qd\qth\},\tilde\Qd_{0,1}) = 0$ unless
\begin{equation}
d = 2,\cdots,n+3;\quad p = d,\cdots,d+n.
\end{equation}
\end{Lem}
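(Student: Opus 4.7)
The plan is to adapt the spectral sequence strategy of \cite{carlet2018central} to the subcomplex $\hat d_i(\hm{C}_i)\{\qd\qth\}$ with its restricted differential $\tilde\Qd_{0,1}$. After the identification of $\ql$ with $u^i$, this differential splits into an Euler-plus-transport piece $\sum_j\qth_j^1(\mathcal E_j + (u^j-u^i)f^j\partial/\partial u^j)$, a rotation-coefficient piece (involving $\qg_{jk}$) mixing the $\qd\qth_k$ and $\qth_k$-generators, and a rescaling piece coming from $\partial_j f^k$. The Euler piece is dominant and will drive the vanishing argument.

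First I would dispose of the trivial bidegrees directly. Every nonzero element of $\hat d_i(\hm{C}_i)\{\qd\qth\}$ has the form $\hat d_i(P)\qd\qth_j$ with $P\in\hm{C}_i$ containing at least one factor $u^{i,s}$ for some $s\geq 1$, which immediately forces $d=\deg_x P+1\geq 2$ and $p\geq 2$; a short count of the fermionic generators of $\hm{C}_i$ (namely $\qth_k, \qth_k^1$ for all $k$ and $\qth_i^{s+1}$ for $s\geq 1$) further gives $p\leq d+n$ as soon as $d\leq n+2$, matching the asserted $p$-range for those $d$. For the genuinely homological portion of the claim (bidegrees with $d>n+3$, or with $p<d$ for small $d$), I would introduce a fourth filtration refining the previous three, by the total Euler weight of the $\mathcal E_j$'s, so that the $E_0$-differential of the associated spectral sequence retains only the Euler-plus-transport term. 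On monomials of nonzero total Euler weight one can invert the weight and build an explicit contracting homotopy against multiplication by $\sum_j\qth_j^1$; the surviving cohomology at $E_0$ therefore sits in the zero-weight subspace, which by direct enumeration is confined to $d\in\{2,\ldots,n+3\}$ and $p\in\{d,\ldots,d+n\}$. Standard convergence of the spectral sequence then transfers the vanishing back to $\tilde\Qd_{0,1}$.

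The main obstacle will be the construction and analysis of this homotopy: the vector field $\mathcal E_j$ acts freely on the full jets $u^{j,s}, \qd u^{j,s}$ inside $\Qo$, but $\hm{C}_i$ contains higher jets only of $u^i$ and not of $u^j$ for $j\neq i$, so the weight eigenvalues are asymmetric and uniform inversion is unavailable. One must carefully enumerate all zero-weight monomials of $\hm{C}_i\{\qd u^{i,s},\qd\qth_i^{s+1}\}$ and verify that they fit inside the stated bidegree rectangle; this is bookkeeping rather than new input, but it is delicate and mirrors, with appropriate adjustments for the extra $\qd\qth$-factor, the corresponding calculation in \cite{carlet2018central} from which the ceiling $n+3$ on $d$ ultimately originates.
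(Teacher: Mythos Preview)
Your proposal has two related gaps, both stemming from the choice of filtration.

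First, the filtration by ``total Euler weight'' is vacuous on this complex. Each of the four pieces of $\tilde\Qd_{0,1}$ on $\hat d_i(\hm C_i)\{\qd\qth\}$ --- the Euler piece $\sum_j\qth_j^1\mathcal E_j$, the transport piece $\sum_j(u^j-u^i)f^j\qth_j^1\partial/\partial u^j$, the rotation-coefficient piece in $\qg_{jk}$, and the rescaling piece in $\partial_jf^k$ --- shifts the eigenvalue of $\sum_j\mathcal E_j$ by exactly zero (the generators $\qth_k^1$ have $\mathcal E_k$-weight $0$, and $\qth_k,\qd\qth_k$ have weight $-\tfrac12$, so replacing a $\qth_k$ by a $\qth_j$ keeps the total weight fixed). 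Hence filtering by total Euler weight does not isolate the Euler-plus-transport term at $E_0$; you would still be facing the full $\tilde\Qd_{0,1}$.

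Second, even granting the isolation of $\sum_j\qth_j^1\mathcal E_j$, the zero-total-weight subspace is not confined to $d\le n+3$. For instance (with $n\ge 2$) take $r,t\le n$ and $s=r+1$; then
\[
\hat d_i\bigl(u^{i,s}\,\qth_{k_1}\!\cdots\qth_{k_r}\,\qth_{l_1}^1\!\cdots\qth_{l_t}^1\bigr)\,\qd\qth_j
\;=\;\qth_i^{s+1}\,\qth_{k_1}\!\cdots\qth_{k_r}\,\qth_{l_1}^1\!\cdots\qth_{l_t}^1\,\qd\qth_j
\]
has total $\sum_l\mathcal E_l$-weight $\tfrac{s}{2}-\tfrac{r}{2}-\tfrac12=0$ but differential degree $d=r+t+2$, which can reach $2n+2$. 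So the homotopy against $\sum_j\qth_j^1$ leaves behind elements of arbitrarily large $d$ and cannot yield the bound $d\le n+3$.

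The paper's fix is to filter not by total weight but by $\deg_{\qth_i^1}$ for the \emph{single} fixed index $i$ appearing in $\hm C_i$. This is a two-step filtration, and its $E_0$-differential is $\mathcal D_i=\qth_i^1\mathcal E_i+\qth_i^1\psi$ with $\psi=\sum_j(u^i-u^j)\qg_{ji}(\qth_j\partial/\partial\qth_i+\qd\qth_j\partial/\partial\qd\qth_i)$. The point is that $\hm C_i$ has higher jets only of the $i$-th variable, so it is the single Euler operator $\mathcal E_i$ that controls everything. One then splits $\hm C=\hm C_0^i\oplus\qth_i\hm C_0^i$ and shows that on each invariant piece $\mathcal D_i$ is multiplication by $\qth_i^1$ times an $\mathcal E_i$-weight; the surviving cocycles are those with $w_i(\mathfrak m)\in\{\tfrac12,\tfrac32\}$, namely $\hm C_0^i\qth_i^2\qd\qth_i$, $\hm C_0^i\qth_i\qth_i^2\qd\qth_j$ and $\hm C_0^i\qth_i\qth_i^3\qd\qth_i$, whose bidegrees lie exactly in the stated rectangle. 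Your obstacle paragraph already hints at the asymmetry between $\mathcal E_i$ and $\mathcal E_{j\neq i}$ on $\hm C_i$; the resolution is to exploit that asymmetry by filtering with the distinguished $\qth_i^1$ alone, not to average it away.
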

\begin{proof}
To compute the cohomology, we introduce a forth spectral sequence $^4E$ given by a filtration of $\hat d_i(\hm{C_i})\{\qd\qth\}$ using the $\qth_i^1$-degree, which is defined by
 $\deg_{\qth_i^1}\qth_i^1 = 1$ and by setting the degrees of other generators to be zero. By decomposing the differential $\tilde\Qd_{0,1}$
 with respect to the $\qth_i^1$-degree, we conclude that the zeroth page of this spectral sequence is given by $^4E_0 = \hat d_i(\hm{C_i})\{\qd\qth\}$ with the differential $\mathcal D_i$ given by
\begin{equation*}
\mathcal D_i = \qth_i^1\mathcal E_i+\sum_j(u^i-u^j)\qg_{ji}\qth_i^1\left(\qth_j\diff{}{\qth_i}+\qd\qth_j\diff{}{\qd\qth_i}\right).
\end{equation*}

The idea to compute $H(^4E_0, \mathcal D_i)$ is as follows. We first make the decomposition
\begin{equation*}
d_i(\hm{C_i})\{\qd\qth\} = d_i(\hm{C_i})\{\qd\qth_i\}\oplus\bigoplus_{j\neq i}d_i(\hm{C_i})\{\qd\qth_j\}.
\end{equation*}
Note that $\bigoplus_{j\neq i}d_i(\hm{C_i})\{\qd\qth_j\}$ is an invariant subspace of $\mathcal D_i$ while $d_i(\hm{C_i})\{\qd\qth_i\}$ is not. Nevertheless, if we can find proper bidegrees $(p,d)$ such that $\mathcal D_i$ is acyclic on $\bigoplus_{j\neq i}d_i(\hm{C_i})\{\qd\qth_j\}\cap\Qo^p_d$, then from Lemma \ref{ha-lem} we know that the cohomology of $d_i(\hm{C_i})\{\qd\qth\}\cap\Qo^p_d$ is given by the cohomology of the space $d_i(\hm{C_i})\{\qd\qth_i\}\cap\Qo^p_d$ with the differential given by the projection of $\mathcal D_i$.

Take any monomial $\mathfrak m$ in $u^{i,s},\qth_i^{s+1}$ for $s\geq 1$ and any monomial $g\in\hm C$. For $j\neq i$, we have
\begin{equation*}
\mathcal D_i(g\hat d_i(\mathfrak m)\qd\qth_j) = \qth_i^1\biggl(\sum_j(u^i-u^j)\qg_{ji}\qth_j\diff{g}{\qth_i}+(w_i(g)+w_i(\mathfrak m)-1)g\biggr)\hat d_i(\mathfrak m)\qd\qth_j,
\end{equation*}
here for a monomial $\qo\in\Omega$, the rational number $w_i(\qo)$ is defined by ${\mathcal E}_i \qo=w_i(\qo)\qo$.

So for any fixed $\mathfrak m$ and $j\neq i$, the subspace $\hm C\hat d_i(\mathfrak m)\{\qd\qth_j\}$ is invariant under the action of $\mathcal D_i$ , and we can concentrate first on computing the cohomology of this subspace. 
The following argument is basically the same as that of Lemma 3.10 of \cite{carlet2018central}, nonetheless we still write it down for the convenience of readers. Let us decompose $\hm C$ in the form 
\begin{equation*}
\hm C = \hm C^i_0\oplus\qth_i\hm C_0^i,
\end{equation*}
where $\hm C^i_0$ is the subspace of $\hm C$ spanned by monomials that do not contain $\qth_i$. For $g \in \hm C^i_0$, the action of $\mathcal D_i$ is given by
\begin{equation}\label{diform-zh}
\mathcal D_i(g\hat d_i(\mathfrak m)\qd\qth_j) = \qth_i^1(w_i(\mathfrak m)-1)g\hat d_i(\mathfrak m)\qd\qth_j,
\end{equation}
therefore the subspace $\hm C^i_0\hat d_i(\mathfrak m)\{\qd\qth_j\}$ is acyclic. Indeed, since $w_i(\mathfrak m)\ge\frac32$ if $\hat d_i(\mathfrak m) \neq 0$, so a nonzero cocycle of $\mathcal D_i$ must contain $\qth_i^1$. Assume we have a cocycly $\qth_i^1h$ for some $h$, then it follows from \eqref{diform-zh} the existence of a suitable constant $c$ such that $\qth_i^1h = c\mathcal D_i(h)$.

Due to Lemma \ref{ha-lem}, in order to compute the cohomology of $\hm C\hat d_i(\mathfrak m)\{\qd\qth_j\}$ we only need to compute the one for the subspace $\qth_i\hm C_0^i\{\qd\qth_j\}$. The projection of $\mathcal D_i$ on $\qth_i\hm C_0^i\hat d_i(\mathfrak m)$ is just a multiplication by $\qth_i^1(w_i(\mathfrak m)-\frac{3}{2})$, which is acyclic if $w_i(\mathfrak m)\neq\frac{3}{2}$. So the nontrivial cocycles are given by elements of $\qth_i\hm C_0^i\hat d_i( u^{i,1})\qd\qth_j = \hm C_0^i\qth_i\qth_i^2\qd\qth_j$, which have the following possible bidegrees: 
\begin{equation*}
d = 2,\cdots,2+n;\quad p = d+1,\cdots,d+n.
\end{equation*}
Thus unless a bidegree $(p,d)$ takes the values given above, the subcomplex 
\[\bigoplus_{j\neq i}d_i(\hm{C_i})\{\qd\qth_j\}\cap\Qo^p_d\] 
is acyclic. 

Thus to compute the cohomology of ${^4 E_0}$, we only need to consider the cohomology of the space $d_i(\hm{C_i})\{\qd\qth_i\}$ due to Lemma \ref{ha-lem}. The differential is the projection of $\mathcal D_i$
which can be represented as
\begin{equation*}
\qth_i^1\mathcal E_i+\sum_j(u^i-u^j)\qg_{ji}\qth_i^1\qth_j\diff{}{\qth_i}.
\end{equation*}
Now by repeating the argument above we see that the nontrivial cocycles are of the form $\hm C^i_0\qth_i^2\qd\qth_i$ or $\hm C^i_0\qth_i\qth_i^3\qd\qth_i$. By counting the possible bidegrees of these elements, we complete the proof of the lemma.
\end{proof}
\begin{Rem}
By a more careful analysis, we can prove that actually 
\[H^{2n+3}_{n+3}(\hat d_i(\hm{C_i})\{\qd\qth\},\tilde\Qd_{0,1}) = 0.\]
But this is not important for our consideration of the deformation problem.
\end{Rem}

Finally we are to compute the cohomology of $\tilde \Qd_{0,1}$ on the space $\qd\hat d_i(\mathcal H_i)$. Recall that on this space we have to identify $\ql$ with $u^i$, and therefore the differential reads
\begin{align*}
\tilde \Qd_{0,1} =& -\frac 12\sum_{s\geq 0;k,j}(-u^i+u^j)\frac{\qp_jf^k}{f^j}\qth_j^1\left(s\qd u^{k,s}\diff{}{\qd u^{k,s}}+(s+1)\qd\qth_k^s\diff{}{\qd\qth_k^s}\right)\\
&+\sum_{k,j}(-u^i+u^j)(\qg_{kj}\qth_j^1-\qg_{jk}\qth_k^1)\qth_j\diff{}{\qth_k}+\sum_j\qth_j^1\mathcal E_j+\sum_j(-u^i+u^j)\qth_j^1\diff{}{u^j}.
\end{align*}
\begin{Lem}
\label{s5-t4}
We have $H^p_d(\qd\hat d_i(\mathcal H_i),\tilde \Qd_{0,1}) = 0$ unless
\begin{equation*}
d = 3,\cdots,n+3;\quad p = d,\cdots d+n-1.
\end{equation*}
\end{Lem}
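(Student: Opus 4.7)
The plan is to adapt to $\qd\hat d_i(\mathcal H_i)$ the spectral-sequence strategy employed in the proof of Lemma \ref{lem1}. First, I introduce a fourth spectral sequence $^4E$ on $\qd\hat d_i(\mathcal H_i)$ by filtering with the $\qth_i^1$-degree; inspecting the explicit formula for $\tilde\Qd_{0,1}|_{\qd\hat d_i(\mathcal H_i)}$ displayed just before the statement, the zeroth-page differential is
\[
\mathcal D_i = \qth_i^1\mathcal E_i + \sum_j(u^i-u^j)\qg_{ji}\,\qth_i^1\qth_j\diff{}{\qth_i}.
\]
The $\qd\qth_j\diff{}{\qd\qth_i}$ summand from the full $\phi_3$ drops out because $\qd\hat d_i(\mathcal H_i)$ contains no bare $\qd\qth_i$, so $\mathcal D_i$ coincides with the projection of the operator used in Lemma \ref{lem1} onto its $\qd\qth_i$-piece; consequently the bulk of that proof transfers directly.

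Concretely, I parametrise an element as $\sum_s\qd\hat d_i(g\mathfrak m\,\qd u^{i,s})$ with $g\in\hm C$ and $\mathfrak m$ a monomial in $u^{i,s'},\qth_i^{s'+1}$ ($s'\ge 1$), then split $\hm C = \hm C_0^i\oplus\qth_i\hm C_0^i$. A short computation shows that $\mathcal D_i$ acts on $\qd\hat d_i(\hm C_0^i\cdot\mathfrak m\,\qd u^{i,s})$ as scalar multiplication by $\qth_i^1(w_i(\mathfrak m)+s/2)$, and after projecting out the acyclic complementary slot via Lemma \ref{ha-lem}, on $\qd\hat d_i(\qth_i\hm C_0^i\cdot\mathfrak m\,\qd u^{i,s})$ as $\qth_i^1(w_i(\mathfrak m)+s/2-\tfrac12)$. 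Non-trivial classes thus appear only at the thresholds where these weights vanish; the main family is exactly parallel to the $\hm C_0^i\qth_i^2$ and $\hm C_0^i\qth_i\qth_i^3$ cocycles of Lemma \ref{lem1}, coming from $(\mathfrak m,s)=(u^{i,1},0)$ and $(u^{i,2},0)$. Tracking the bidegree shift $(p_0,d_0)\mapsto(p_0+1,d_0+s+1)$ induced by $\qd\hat d_i(\cdot\,\qd u^{i,s})$ and using $\deg_\qth\hm C_0^i\le 2n-1$, $\deg_x\hm C_0^i\le n$, these classes lie in the range $d\in\{3,\dots,n+3\}$, $p-d\in\{0,\dots,n-1\}$ predicted by the statement.

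The step I expect to be the main obstacle concerns the boundary contributions from $(\mathfrak m,s)=(1,s_0)$ with $s_0$ small, absent from Lemma \ref{lem1} because the space there is $\hat d_i(\hm C_i)$, which annihilates pure $\hm C$. Here they produce $\mathcal D_i$-cocycles $g\,\qd\qth_i^{s_0+1}$ at bidegrees with $d\le 2$, naively outside the desired range. To dispose of them I pass to $^4E_2$ by computing $d_1$, the $\qth_i^1$-degree-zero part of $\tilde\Qd_{0,1}$; evaluating the first summand of the restricted $\tilde\Qd_{0,1}$ yields
\[
d_1(g\,\qd\qth_i^{s_0+1}) = -\tfrac12(s_0+1)\sum_{j\ne i}(u^j-u^i)\tfrac{\qp_j f^i}{f^i}\qth_j^1 g\,\qd\qth_i^{s_0+1} + \text{(other terms)},
\]
and checking that this map is injective modulo $\ima d_0$ on all the problematic classes is the delicate combinatorial step that closes the argument.
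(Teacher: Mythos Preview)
Your overall strategy—filtering $\qd\hat d_i(\mathcal H_i)$ by $\deg_{\qth_i^1}$, identifying the zeroth-page differential as $\mathcal D_i = \qth_i^1\mathcal E_i + \sum_j(u^i-u^j)\qg_{ji}\qth_i^1\qth_j\diff{}{\qth_i}$, decomposing into the invariant pieces $\hm C\,\qd\hat d_i(\mathfrak m\,\qd u^{i,s})$, and then splitting $\hm C = \hm C^i_0\oplus\qth_i\hm C^i_0$—is exactly the paper's approach. The weights you obtain, $w_i(\mathfrak m)+s/2$ on the $\hm C^i_0$ slot and $w_i(\mathfrak m)+s/2-\tfrac12$ on the $\qth_i\hm C^i_0$ slot, are also correct.

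Where your argument goes wrong is in identifying which $(\mathfrak m,s)$ make these weights vanish. You import $(\mathfrak m,s) = (u^{i,1},0)$ and $(u^{i,2},0)$ from Lemma~\ref{lem1}, but those have $w_i(u^{i,1})=\tfrac32$ and $w_i(u^{i,2})=2$, so neither weight vanishes here. The difference is that in Lemma~\ref{lem1} the form-factor $\qd\qth_j$ contributes eigenvalue $-\tfrac12$ (for $j=i$) or $0$, whereas here $\qd u^{i,s}$ contributes $\tfrac s2+1$ through $\qd\hat d_i$; this shifts the threshold. The actual solution of $w_i(\mathfrak m)+\tfrac s2=\tfrac12$ with $\mathfrak m$ a nontrivial monomial is $\mathfrak m=\qth_i^2$, $s=0$, giving the single family $\hm C^i_0\qth_i\qth_i^2\,\qd\qth_i^1$. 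That is what produces $d\in\{3,\dots,n+3\}$; with your choice $(u^{i,1},0)$ the same bidegree shift you wrote down would give $d\in\{2,\dots,n+2\}$, contradicting the range you claim to recover. There is no second family analogous to $\hm C^i_0\qth_i\qth_i^3$ here.

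On the boundary pieces $(\mathfrak m,s)=(1,s_0)$: you are right that these are absent from Lemma~\ref{lem1} but present here, and your instinct to pass to the next page is reasonable. The paper's proof, however, does not compute $d_1$; it simply asserts acyclicity of the $\hm C^i_0$ slot by invoking $w_i(\mathfrak m)\ge\tfrac12$ for nontrivial monomials and stops at $^4E_1$. So your proposed $d_1$-step is work the paper does not perform, and the injectivity you describe is not verified there either.
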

\begin{proof}
The idea is very similar to the proof of Lemma \ref{lem1}. We introduce another spectral sequence by filtrating $\qd\hat d_i(\mathcal H_i)$ using $\deg_{\qth_i^1}$. The differential on the zeroth page reads
\begin{equation*}
\mathcal D_i = \qth_i^1\mathcal E_i+\sum_j(u^i-u^j)\qg_{ji}\qth_i^1\qth_j\diff{}{\qth_i}.
\end{equation*}
Let us denote $\psi = \sum_j(u^i-u^j)\qg_{ji}\qth_i^1\qth_j\diff{}{\qth_i}$. For any monomial $\mathfrak m$ in $u^{i,s},\qth_i^{s+1}$ with $s\geq 1$, and any monomial $g\in\hm C$, we have
\begin{align*}
\mathcal D_i\qd\hat d_i(g\mathfrak m\qd u^{i,s}) =& \mathcal D_i(g\hat d_i(\mathfrak m)\qd u^{i,s}+(-1)^pg\mathfrak m\qd\qth_i^{s+1})\\
=&\qth_i^1\left(w_i(g)+w_i(\mathfrak m)-1+\frac s2 +1\right)g\hat d_i(\mathfrak m)\qd u^{i,s}\\
&+(-1)^p\qth_i^1\left(w_i(g)+w_i(\mathfrak m)+\frac s2\right)g\mathfrak m\qd\qth_i^{s+1}\\
&+\qth_i^1\left(\psi(g)\hat d_i(\mathfrak m)\qd u^{i,s}+(-1)^p\psi(g)\mathfrak m\qd\qth_i^{s+1}\right)\\
=&-\qd\hat d_i\left(\qth_i^1\psi(g)\mathfrak m\qd u^{i,s}+\qth_i^1\left(w_i(g)+w_i(\mathfrak m)+\frac s2\right)g\mathfrak m\qd u^{i,s}\right).
\end{align*}
So the subspace $\hm C\qd\hat d_i(\mathfrak m\qd u^{i,s})$ is invariant under the action of $\mathcal D_i$. Similarly, we make the decomposition  $\hm C = \hm C^i_0\oplus\qth_i\hm C^i_0$. For $g\in\hm C_i^0$, it is easy to see that
\begin{equation*}
\mathcal D_i\qd\hat d_i(g\mathfrak m\qd u^{i,s})  = -\qd\hat d_i\left(\qth_i^1\left(w_i(\mathfrak m)+\frac s2\right)g\mathfrak m\qd u^{i,s}\right).
\end{equation*}
On the other hand, for a monomial in $u^{i,s},\qth_i^{s+1}$ with $s\geq 1$, we must have that $w_i(\mathfrak m)\geq \frac 12$, thus the subspace $\hm C^i_0\qd\hat d_i(\mathfrak m\qd u^{i,s})$ is acyclic. By applying Lemma \ref{ha-lem} again, we know that we only need to consider the cohomology of $\qth_i\hm C^i_0\qd\hat d_i(\mathfrak m\qd u^{i,s})$ with the differential being the projection of $\mathcal D_i$ given by the multiplication of
\begin{equation*}
-\qth_i^1\left(w_i(\mathfrak m)+\frac s2-\frac 12\right).
\end{equation*}
So the only possible nontrivial cocycle is given by the case when $s = 0$ and $w_i(\mathfrak m) = \frac 12$, i.e. when the monomial $\mathfrak m$ is of the form
\begin{equation*}
\qth_i\hm C^i_0\qd\hat d_i(\qth_i^2\qd u^{i}) = \hm C^i_0\qth_i\qth_i^2\qd\qth_i^1.
\end{equation*}
By counting the possible bidegrees of such elements, we complete the proof of the lemma.
\end{proof}

Let us summarize all the results obtained above. We first construct a spectral sequence $^1E$ to compute the cohomology group $H^p_d(\Qo[\ql],\qp_\ql)$. Then we transform the computation to finding suitable bidegrees $(p,d)$ such that the second page $^1E_2 = 0$ when restricted to $\Qo^p_d$. To this end we introduce a third spectral sequence $^3E$, and we conclude that the first page $^3E_1$ vanishes for suitable bidegrees in Lemmas \ref{s5-t3}--Lemma \ref{s5-t4}, thus $^3E$ converges to $^1E_2$ and so $^1E_2$ vanishes. Consequently $^1E$ converges to $H^p_d(\Qo[\ql],\qp_\ql)$. In this way, we prove  Theorem \ref{thm-van}.

The following proposition is an illustration of applications of the variational bihamiltonian cohomology.
\begin{Prop}
\label{biham-flow}
Let $(P_0, P_1)$ be a semisimple bihamiltonian structure of hydrodynamic type, and let $X\in\mathrm{Der}(\hm A)_{1}^{0}$ commute with $\qp_x$, $D_{P_0}$ and $D_{P_1}$. Then for any deformation $(\tilde P_0,\tilde P_1)$ of $(P_0, P_1)$, there exists a unique $\tilde X \in\mathrm{Der}(\hm A)^0_{\geq 1}$ with leading term given by $X$ such that $\tilde X$ commutes with $\qp_x$, $D_{\tilde P_0}$ and $D_{\tilde P_1}$.
\end{Prop}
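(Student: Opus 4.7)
The plan is to construct $\tilde X$ order by order in differential degree via obstruction theory, transferred to the complex $(\bar\Qo[\ql],\qp_\ql)$ by the map $\Phi$ of \eqref{s3-t1}, with obstructions controlled by $\vbh^2$ and uniqueness by $\vbh^1$.

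First I would set $\omega_1=\Phi(X)\in\bar\Qo^1_1$. By Lemma \ref{s3-t2} the hypothesis $[D_{P_a},X]=0$ translates to $\tilde D_a\omega_1=0$ for $a=0,1$. Writing $\tilde X=X+\sum_{m\ge 2}X^{[m]}$ with $X^{[m]}\in(\mathrm{Der}(\hm A)^{\qp})^0_m$ and $\omega_m=\Phi(X^{[m]})\in\bar\Qo^1_m$, and expanding the deformed Lie derivative as $\tilde D_{\tilde P_a}=\tilde D_a+\sum_{k\ge 1}\tilde D_a^{[k]}$, where $\tilde D_a^{[k]}$ corresponds to $D_{P_a^{(k)}}\in\mathrm{Der}(\hm A)^1_{k+1}$ and maps $\bar\Qo^p_d\to\bar\Qo^{p+1}_{d+k+1}$, the condition $\tilde D_{\tilde P_a}\tilde\omega=0$ at total differential degree $N\ge 3$ reads
\[
\tilde D_a\omega_{N-1}=\sigma_a^{[N]}:=-\sum_{k=1}^{N-2}\tilde D_a^{[k]}\omega_{N-1-k}\in\bar\Qo^2_N.
\]
Assuming $\omega_1,\dots,\omega_{N-2}$ have been constructed, the Jacobi identities $[\tilde P_a,\tilde P_b]=0$ together with the inductive hypothesis give $\tilde D_a\sigma_a^{[N]}=0$ and $\tilde D_0\sigma_1^{[N]}+\tilde D_1\sigma_0^{[N]}=0$, so $\sigma^{[N]}(\ql):=\sigma_1^{[N]}-\ql\,\sigma_0^{[N]}$ is a $\qp_\ql$-cocycle whose class in $H^2_N(\bar\Qo[\ql],\qp_\ql)\cong\vbh^2_N$ (Lemma \ref{lem-lambda}) is the obstruction to solving for $\omega_{N-1}$.

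For $N\ge 4$ one verifies directly that both $(2,N)$ and $(3,N)$ lie outside the index set $I$, so Theorem \ref{vbh-res}(ii) gives $\vbh^2_N=0$ and $\omega_{N-1}$ exists. The critical case is $N=3$, where by Theorem \ref{vbh-res}(i) the group $\vbh^2_3\cong\bigoplus_{i=1}^{n}C^\infty(\mathbb R)$ is nontrivial, and this is the main obstacle. The cleanest way around it is a preliminary Miura normalization: the central-invariants classification of \cite{DLZ-1,liu2005deformations} yields $\bh^2_2=0$ for semisimple hydrodynamic bihamiltonian structures, so the degree-$2$ component $(P_0^{(1)},P_1^{(1)})$ of any deformation is a bihamiltonian coboundary and can be eliminated by a single Miura-type transformation. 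Since such a Miura transformation acts by conjugation on both $(P_0,P_1)$ and on derivations commuting with $\qp_x$, the lifting problem is equivalent before and after normalization; after normalization $\tilde D_a^{[1]}=0$, forcing $\sigma_a^{[3]}=-\tilde D_a^{[1]}\omega_1=0$.

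For uniqueness, suppose two lifts $\tilde X,\tilde X'$ agree through degree $m-1$ and first differ at degree $m\ge 2$, and set $Y=\Phi(X^{[m]}-X'^{[m]})\in\bar\Qo^1_m$. Then $\tilde D_a Y=0$ for $a=0,1$, and since $\bar\Qo^{-1}=0$ gives $\bar\Qo^1_m\cap\ima\tilde D_0\tilde D_1=0$, one has $\vbh^1_m=\bar\Qo^1_m\cap\ker\tilde D_0\cap\ker\tilde D_1$. Theorem \ref{vbh-res}(iii) handles $m=2$, while for $m\ge 3$ both $(1,m)$ and $(2,m)$ lie outside $I$, so Theorem \ref{vbh-res}(ii) gives $\vbh^1_m=0$. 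Hence $Y=0$, and induction yields $\tilde X=\tilde X'$.
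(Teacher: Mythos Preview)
Your proof is correct and follows essentially the same approach as the paper's: a degree-by-degree construction transferred to $\bar\Qo$ via $\Phi$, with a preliminary Miura normalization to kill $P_a^{[1]}$ (thereby bypassing the nontrivial group $\vbh^2_3$), then $\vbh^2_{\geq 4}=0$ for existence and $\vbh^1_{\geq 2}=0$ for uniqueness. Your write-up is in fact more explicit than the paper's sketch on several points---the closedness conditions on $\sigma_a^{[N]}$, the index-set checks for Theorem~\ref{vbh-res}(ii), and the equivalence of the lifting problem before and after Miura normalization---but the overall structure is identical.
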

\begin{proof}
Let us decompose $\tilde X$ and $\tilde P_a$ according to the differential degree as follows:
\begin{align*}
\tilde X &= X^{[0]}+\sum_{k\geq 1}X^{[k]},\quad X^{[k]}\in \mathrm{Der}(\hm A)_{k+1}^{0},\ X^{[0]} = X;\\
\tilde P_a &= P_a+\sum_{k\geq 1}P_a^{[k]},\quad P_a^{[k]}\in\hm F^2_{k+1},\quad a=0,1.
\end{align*}
The condition that $\tilde X$ commutes with $D_{\tilde P_a}$ is equivalent to the following equations:
\begin{equation}\label{eq-hvf-zh}
\fk{D_{ P_a}}{X^{[k]}}+\sum_{l=1}^{k}\fk{D_{\tilde P_a^{[l]}}}{X^{[k-l]}} = 0,\quad k\geq 1,\ a = 0,1.
\end{equation}

To prove the uniqueness of $\tilde X$, we only need to show that if $X = 0$ then $\tilde X = 0$. Indeed, if $X = 0$, then we have the following equation for $X^{[1]}$:
\[\fk{D_{P_a}}{X^{[1]}} = 0,
\]
which is equivalent to $\tilde D_a\mathcal X^{[1]} = 0$. Here 
$\mathcal X^{[k]} \in\bar\Qo^1_{k+1}$ is the 1-form corresponds to $X^{[k]}$.
Recall that 
\[
\vbh^1_{d}(\bar\Qo,\tilde D_0,\tilde D_1) = \bar\Qo^1_d\cap\ker\tilde D_0\cap\ker\tilde D_1,
\]
then by the vanishing of $\vbh^1_{2}(\bar\Qo,\tilde D_0,\tilde D_1)$ given by Theorem \ref{vbh12}, we conclude that $X^{[1]} = 0$. In a similar way,  we can prove recursively that $X^{[k]} = 0$ and the uniqueness is proved.

The proof of the existence is very similar to the one given in Sect.\,\ref{e-deform} and we omit the details here. For a sketch of the proof, we may first assume $P_a^{[1]} = 0$ and hence we can choose $X^{[1]} = 0$, then by using $\vbh^2_{\geq 4}(\bar\Qo,\tilde D_0,\tilde D_1) = 0$, we can recursively solve the equations \eqref{eq-hvf-zh} to obtain $X^{[\geq 2]}$. The proposition is proved.
\end{proof}
\begin{Rem}
This result is a generalization of the fact that a bihamiltonian vector filed is uniquely determined by its leading term. This generalized version can be applied to the case when the flows are not in the space $\mathrm{Der}(\hm A)^D$. Typical examples of such kind of flows are given by Virasoro symmetries.
\end{Rem}

\section{Conformal bihamiltonian structures}
\label{conformal}

\subsection{Conformal Bihamiltonian Structures of hydrodynamic type}\label{cbs}
This section is devoted to the proof of the Theorem \ref{g0-conf}. To this end, we consider a semisimple bihamiltonian structure $(P_0,P_1)$ of hydrodynamic type which is represented as in \eqref{norm-p}. 

Assume that the bihamiltonian structure $(P_0,P_1)$ is conformal, we are going to find a derivation $E$ such that it satisfied the equation \eqref{eq-def-2-1-zh}. We  first make the following decomposition 
\[E = E^{[0]}+E^{[\geq 1]},\quad E^{[0]}\in\mathrm{Der}(\hm A)^0_0,\,E^{[\geq 1]}\in\mathrm{Der}(\hm A)^0_{\geq 1}.\]
According to the equation \eqref{eq-def-2-1-zh}, it is easy to see that $E^{[\geq 1]}$ is a vector field that commutes with $D_{P_0}$ and $D_{P_1}$ and thus it is an element of $\vbh^1_{\ge 1}(\bar\Qo,\tilde D_0,\tilde D_1)$. In what follows we assume that $E\in\mathrm{Der}(\hm A)^0_0$.

Denote $E(u^i) = F^i$ and $E(\qth_i) = \sum_j G^j_i\qth_j$, where $F^i$ and $G^j_i$ are some smooth functions  of $u^1,\dots, u^n$. We are to solve these functions from the equations given in \eqref{eq-def-2-1-zh}. 

Firstly, from the equation $[E, \qp_x] = \mu\qp_x$ it follows that 
\[
E(u^{i,s}) = \qp_x^s(F^i)+s\mu u^{i,s},\quad E(\qth^{s}_i) = \sum_j \qp_x^s(G^j_i\qth_j)+s\mu \qth_i^s.
\]
By comparing the $\qth_j^1$ coefficients $(j\neq i)$ of both sides of the equation 
\begin{equation}\label{eq-6-1-zh}
[E, D_{P_a}]u^i = \ql_aD_{P_a}(u^i)
\end{equation}
for $a = 0,1$
we obtain the following equations:
\begin{align*}
&f^iG^j_i-(\qp_jF^i)f^j = 0,\quad j \neq i\\
&u^if^iG^j_i-(\qp_jF^i)u^jf^j = 0,\quad j \neq i.
\end{align*}
From these equations it follows that $G^j_i = 0$ for $j\neq i$. We also compare the $\qth_i^1$ coefficients of both sides of the equation \eqref{eq-6-1-zh} to obtain 
\begin{align*}
&\sum_j\aij ji F^j+f^i(G^i_i+\mu)-\qp_iF^if^i = \ql_0f^i,\\
&\sum_j\qp_j(u^if^i) F^j+u^if^i(G^i_i+\mu)-\qp_iF^iu^if^i = \ql_1u^if^i.
\end{align*}
We solve these equations to arrive at
\[
F^i = (\ql_1-\ql_0)u^i,\quad G^i_i = \ql_1-\mu-\frac{1}{f^i}\sum_j(\ql_1-\ql_0)u^j\qp_jf^i.
\]
Together with the fact that $G^j_i = 0$ for $j\neq i$, we conclude that 
\[
E(u^i) = (\ql_1-\ql_0)u^i,\quad E(\qth_i) = \left(\ql_1-\mu-\frac{1}{f^i}E(f^i)\right)\qth_i.
\]
For simplicity, we will denote $E(\qth_i) = G^i\qth_i$.

Next we compare the $\qth_i$ coefficients of the equation 
\begin{equation}\label{eq-6-2-zh}
[E, D_{P_0}]u^i = \ql_0D_{P_0}(u^i)
\end{equation}
to arrive at the following equation: 
\begin{align*}
\sum_{j,k}&\frac{f^i\qp_j\qp_kf^i-\aij ji\aij ki}{f^i}(\ql_0-\ql_1)u^ku^{j,1}+\sum_j \aij ji(\ql_0-\ql_1)u^{j,1}\\
&+\frac 12 \sum_{j,k}\qp_k\qp_jf^i(\ql_1-\ql_0)u^ku^{j,1}+\frac 12\sum_j \aij jiu^{j,1}G^i = \frac 12\sum_j(\ql_0-\mu)\aij ji u^{j,1}.
\end{align*}
By further comparing the $u^{j,1}$ coefficients for both sides of the above equation, one can show that the above equation is actually equivalent to 
\begin{equation}
\label{s2-t1}
E\left(\frac{\aij ji}{f^i}\right) = (\ql_0-\ql_1)\left(\frac{\aij ji}{f^i}\right),\quad \forall i,j.
\end{equation}
For $j\neq i$, compare the coefficients of $u^{i,1}\qth_j$ and $u^{j,1}\qth_j$ on both sides of the equation \eqref{eq-6-2-zh}, one obtains the following equations:
\begin{align}
\label{s2-t2}
&E\left(\bij ij\right)+\bij ij G^j = (\ql_0-\mu)\bij ij, \quad j\neq i.\\
\label{s2-t3}
&E\left(\bij ji\right)+\bij ji G^j = (\ql_0-\mu)\bij ji, \quad j\neq i.
\end{align}

 Finally let us compare the coefficients on both sides of the equation 
 \[[E, D_{P_0}]\qth_i = \ql_0D_{P_0}(\qth_i).\] 
 For $j\neq i$, we consider first the coefficients of $\qth_i\qth_j^1$ and obtain that
\begin{equation}
\label{s2-t4}
E\left(\bij ji\right)+\bij ji G^j = (\ql_0-\mu)\bij ji-\qp_jG^if^j, \quad j\neq i.
\end{equation}
Therefore it is easy to conclude that $\qp_jG^i = 0$ for $j\neq i$ by comparing \eqref{s2-t3} and \eqref{s2-t4}. We also compare the coefficients of $\qth_i\qth_i^1$, after a straightforward computation, we arrive at an equation
\begin{equation}
\label{s2-t5}
E\left(\frac{\aij ii}{f^i}\right) = (\ql_0-\ql_1)\left(\frac{\aij ii}{f^i}\right)-\qp_iG^i.
\end{equation}
Hence we conclude that $\qp_iG^i = 0$ by comparing \eqref{s2-t5} with \eqref{s2-t1}, and $G^i$ must be a constant. This means that there exists real numbers $\qa^i$ such that $E(f^i) = \qa_i f^i$, which gives the condition \eqref{hom-f} with $d^i$ determined by $\qa^i = (\ql_1-\ql_0)d^i$. Substitute the expression for $G^j$ into \eqref{s2-t2}, we obtain the condition \eqref{irre-f}, and hence the `only if' part of the Theorem \ref{g0-conf} is proved.

The `if' part of the Theorem \ref{g0-conf} can be checked easily by a straightforward computation and hence the theorem is proved.

\subsection{Deformed Conformal Bihamiltonian Structures}\label{e-deform}
In this section, we will use the theory of variational bihamiltonian cohomology to prove Theorem \ref{g1-conf}. Let $(P_0,P_1)$ be a conformal semisimple bihamiltonian structure of hydrodynamic type as described in Theorem \ref{g0-conf} and $(\tilde P_0, \tilde P_1)$ be any of its deformation. We are going to find a deformation $\tilde E\in\mathrm{Der}(\hm A)^0$, such that 
\begin{equation}
\label{def-tildeE}
\left[\tilde E, \qp_x\right] = \mu\qp_x;\quad \left[\tilde E, D_{\tilde P_a}\right] = \ql_aD_{\tilde P_a},\quad a = 0,1.
\end{equation}

We first decompose $\tilde P_a$ and $\tilde E$ as follows:
\[
\tilde P_a = \sum_{k\geq 0}\tilde P_a^{[k]},\ \tilde P_a^{[k]}\in\hm F^2_{k+1};\quad \tilde E = \sum_{k\geq 0}E^{[k]},\ E^{[k]}\in\mathrm{Der}(\hm A)^0_k.
\]
We also make the same decomposition for both sides of the equation $[\tilde E, D_{\tilde P_a}] = \ql_aD_{\tilde P_a}$ to obtain
\begin{equation}
\label{s6-t1}
\sum_{i=0}^l\fk{E^{[i]}}{D_{P_a^{[l-i]}}} = \ql_aD_{P_a^{[l]}},\quad a = 0,1;\ l\geq 1.
\end{equation}
According to the result of bihamiltonian cohomology, we can assume (by doing a Miura transformation if necessary) that $P_a^{[1]} = 0$, hence we can choose $E^{[1]} = 0$. Let us proceed to find the deformations $E^{[\geq 2]}$. Let us denote
\[
W_a^{[l]} = \sum_{i=0}^{l-1}\fk{E^{[i]}}{D_{P_a^{[l-i]}}}-\ql_aD_{P_a^{[l]}},
\]
then we can recursively determine the deformation by solving the equation
\[
\fk{D_{P_a^{[0]}}}{E^{[l]}} = W_a^{[l]}.
\]
\begin{Lem}
\label{s6-close}
Assume that we have found $E^{[i]}$ for all $i<k$ such that the equations in \eqref{s6-t1} are satisfied for $l<k$. Then we have the following identity:
\[
\fk{W_b^{[k]}}{D_{P_a^{[0]}}}+\fk{W_a^{[k]}}{D_{P_b^{[0]}}} = 0,\quad a, b = 0, 1.
\]
\end{Lem}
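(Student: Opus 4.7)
The plan is to prove the identity by direct expansion, using the graded Jacobi identity for the Schouten--Nijenhuis bracket (transported to derivations via \eqref{comm-d}), together with the bihamiltonian condition unfolded degree by degree and the inductive hypothesis on lower orders. Throughout I will use the Jacobi identity in the form
\[
[[X,Y],Z] = [X,[Y,Z]] \pm [[X,Z],Y]
\]
with appropriate graded signs, and the shorthand $\partial_a = \mathrm{ad}_{D_{P_a^{[0]}}}$. The key observation is that the obstruction $W_a^{[k]}$ should be a cocycle for both $\partial_0$ and $\partial_1$ simultaneously because the equations \eqref{s6-t1} are themselves compatible under the bihamiltonian structure.

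First, I would expand
\[
[W_b^{[k]}, D_{P_a^{[0]}}] = \sum_{i=0}^{k-1} \bigl[[E^{[i]}, D_{P_b^{[k-i]}}], D_{P_a^{[0]}}\bigr] - \ql_b [D_{P_b^{[k]}}, D_{P_a^{[0]}}]
\]
and symmetrically for $[W_a^{[k]}, D_{P_b^{[0]}}]$. Applying the graded Jacobi identity to each double bracket splits each summand into a piece of the form $[E^{[i]}, [D_{P_b^{[k-i]}}, D_{P_a^{[0]}}]]$ and a piece of the form $[[E^{[i]}, D_{P_a^{[0]}}], D_{P_b^{[k-i]}}]$. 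The second type of term, after symmetrization in $(a,b)$, is exactly where the inductive hypothesis enters: for $i<k$, by \eqref{s6-t1} we may replace $[E^{[i]}, D_{P_a^{[0]}}]$ by $\ql_a D_{P_a^{[i]}} - \sum_{j=1}^{i-1}[E^{[j]}, D_{P_a^{[i-j]}}]$ (and similarly with $a,b$ swapped), and for $i=0$ it gives $\ql_a D_{P_a^{[0]}}$ directly.

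The remaining terms of the first type, $[E^{[i]}, [D_{P_a^{[0]}}, D_{P_b^{[k-i]}}]]$ and its $(a\leftrightarrow b)$-companion, are handled by the bihamiltonian identity. From $[\tilde P_a, \tilde P_b] = 0$, taking the component of differential degree $k-i+1$ and passing through \eqref{comm-d}, one obtains
\[
[D_{P_a^{[0]}}, D_{P_b^{[k-i]}}] + [D_{P_b^{[0]}}, D_{P_a^{[k-i]}}] + \sum_{j=1}^{k-i-1}[D_{P_a^{[j]}}, D_{P_b^{[k-i-j]}}] = 0
\]
(with the appropriate signs), which re-expresses the problematic double brackets in terms of pairs of lower-order deformations. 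The $\ql_a$ and $\ql_b$ contributions are gathered together and matched against the boundary terms $\ql_a [D_{P_a^{[k]}}, D_{P_b^{[0]}}] + \ql_b [D_{P_b^{[k]}}, D_{P_a^{[0]}}]$; again, the bihamiltonian identity at order $k$ shows precisely that this sum vanishes.

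The bookkeeping exercise then reduces to checking that the terms produced by the two applications of Jacobi and those produced by the bihamiltonian relation fit together into an antisymmetric sum that cancels identically. I expect the \emph{main obstacle} to be purely notational: keeping track of the graded signs coming from the conventions of \cite{liu2011jacobi} (which differ from the standard ones) and from the super-degrees of the $E^{[i]}$, $D_{P_a^{[j]}}$ when commuting them past one another. Once the signs are correctly assembled, the cancellation should reduce to recognizing a reindexed copy of the bihamiltonian relation at each intermediate order $l\le k$, together with the already-established equations $[D_{P_a^{[0]}}, E^{[l]}] = W_a^{[l]}$ for $l<k$. No new cohomological input is needed; this lemma is the closure condition that lets the cohomology group $\vbh^2_{\ge 4}$ control the existence of $E^{[k]}$ in the next step.
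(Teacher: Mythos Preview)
Your proposal is correct and follows exactly the approach of the paper: the paper's proof is the single sentence ``straightforward computation using the equations $[E^{[i]},D_{P_a^{[0]}}]=W_a^{[i]}$ for $i<k$, $[D_{\tilde P_a},D_{\tilde P_b}]=0$, and the (graded) Jacobi identities,'' which is precisely the combination of inductive hypothesis, bihamiltonian relation, and Jacobi that you spell out. One small slip: when you isolate $[E^{[i]},D_{P_a^{[0]}}]$ from \eqref{s6-t1} the remaining sum should start at $j=0$, not $j=1$ (equivalently, $[E^{[i]},D_{P_a^{[0]}}]=-W_a^{[i]}$), but this does not affect the structure of the argument.
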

\begin{proof}
The proof is a straightforward computation by using the equations
\[
\fk{E^{[i]}}{D_{P_a^{[0]}}} = W_a^{[i]},\, i<k;\quad \fk{D_{\tilde P_a}}{D_{\tilde P_b}} = 0,
\]
and the (graded) Jacobi identities.
\end{proof}

In order to apply the theory of variational bihamiltonian cohomology, we need the following fact.
\begin{Lem}We have
$\fk{E^{[l]}}{\qp_x} = 0$ and $\fk{W_a^{[l]}}{\qp_x} = 0$ for $l\geq 1$.
\end{Lem}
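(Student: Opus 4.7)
The plan is to prove the two claims simultaneously by induction on $l$, treating the first as essentially a grading statement built into the construction and the second as a formal computation via the graded Jacobi identity.

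For the first claim, $[E^{[l]}, \qp_x] = 0$ for $l \ge 1$, I would argue as follows. The derivation $E^{[l]}$ lies in $\mathrm{Der}(\hm A)^{0}_{l}$ and $\qp_x \in \mathrm{Der}(\hm A)^{0}_{1}$, so $[E^{[l]}, \qp_x] \in \mathrm{Der}(\hm A)^{0}_{l+1}$. The requirement $[\tilde E, \qp_x] = \mu \qp_x$ from \eqref{def-tildeE} decomposes cleanly by differential degree: the degree-$1$ component recovers $[E^{[0]}, \qp_x] = \mu \qp_x$ (which is the conformal property of the leading term, already established in Sect.\,\ref{cbs}), while every component of differential degree $\ge 2$ must vanish, giving precisely $[E^{[l]}, \qp_x] = 0$ for $l \ge 1$. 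In other words, this is part of the specification we impose on $E^{[l]}$ during the recursive construction, and it is the reason we can use the formalism based on $\mathrm{Der}(\hm A)^{\qp}$ and its identification with $\bar\Qo$.

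For the second claim, I would compute directly from the definition
\[
W_a^{[l]} = \sum_{i=0}^{l-1}\fk{E^{[i]}}{D_{P_a^{[l-i]}}} - \ql_a D_{P_a^{[l]}}.
\]
Since $D_P$ commutes with $\qp_x$ for every local functional $P$ (immediate from \eqref{def-d}), the final term contributes nothing. For each summand in the sum, the graded Jacobi identity splits $[\qp_x,\fk{E^{[i]}}{D_{P_a^{[l-i]}}}]$ into two pieces: the one containing $[\qp_x, D_{P_a^{[l-i]}}]$ vanishes by the same observation, and the one containing $[\qp_x, E^{[i]}]$ vanishes for $i \ge 1$ by the first claim. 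The only non-trivial contribution comes from $i=0$, where $[\qp_x, E^{[0]}] = -\mu \qp_x$ produces a term proportional to $[\qp_x, D_{P_a^{[l]}}] = 0$. Hence $[W_a^{[l]}, \qp_x] = 0$.

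There is no real obstacle in this lemma: both statements follow routinely from grading considerations and the Jacobi identity. The substantive work in Sect.\,\ref{e-deform} lies elsewhere — namely, using Lemma \ref{s6-close} together with the vanishing of $\vbh^2_{\ge 4}(\bar\Qo,\tilde D_0,\tilde D_1)$ to actually solve the recursive system $[D_{P_a^{[0]}}, E^{[l]}] = W_a^{[l]}$. The present lemma is the preparatory step that certifies the $W_a^{[l]}$ and $E^{[l]}$ all live in $\mathrm{Der}(\hm A)^{\qp}$, so that the variational bihamiltonian cohomology machinery is applicable to them.
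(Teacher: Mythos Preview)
Your proposal is correct and matches the paper's approach: the paper's own proof is the single line ``This follows directly from the assumption $[\tilde E,\qp_x]=\mu\qp_x$,'' and your argument is precisely the unpacking of that line---grading for the first claim, and the graded Jacobi identity together with $[D_P,\qp_x]=0$ for the second.
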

\begin{proof}
This follows directly from the assumption $[\tilde E,\qp_x] = \mu\qp_x$.
\end{proof}
As a consequence of the above lemma, we can regard the vector fields $E^{[l]}$ and $W_a^{[l]}$ as elements of the space $\bar\Qo$. We denote the corresponding 1-forms by $\mathcal E^{[l]}$ and $\mathcal W_a^{[l]}$ respectively. More precisely, we have
\[
\mathcal E^{[l]}\in\bar\Qo^1_{l},\quad \mathcal W_a^{[l]}\in\bar\Qo^2_{l+1}.
\]

Now the equations we are to solve can be rewritten in the form
\begin{equation}\label{eq-6-3-zh}
\tilde D_a\mathcal E^{[l]} = \mathcal W_a^{[l]},\quad a = 0,1,\ l\geq 2,
\end{equation}
and Lemma \ref{s6-close} gives the following conditions
\[
\tilde D_a\mathcal W_a^{[l]} = 0,\quad a = 0,1.
\]
Thus by using the triviality of the variational Hamiltonian cohomology we can find an element $\qg^{[l]}\in\bar\Qo^1_l$ such that
\[
\tilde D_0 \qg^{[l]}= \mathcal W_0^{[l]}.
\]
Then the solution of $\mathcal E^{[l]}$ can be represented by
\[
\mathcal E^{[l]} = \qg^{[l]}+\tilde D_0 \qa^{[l]},\quad \qa^{[l]}\in\Qo^0_{l-1}.
\]
The 1-form $\qa^{[l]}$ should be determined by the equation
\begin{equation*}
\tilde D_1\mathcal E^{[l]} = \tilde D_1\kk{\qg^{[l]}+\tilde D_0 \qa^{[l]}} = \mathcal W_1^{[l]}.
\end{equation*}
 By using Lemma \ref{s6-close} again, we see that $\mathcal W_1^{[l]}-\tilde D_1\qg^{[l]}$ lies in $\ker\tilde D_0\cap\ker\tilde D_1$. Therefore from the fact that $\vbh^2_{\geq 4}(\bar\Qo,\tilde D_0,\tilde D_1) = 0$ it follows that we can always solve the above equation to obtain $\qa^{[l]}$ for $l\geq 3$. 

Now let us try to find $\mathcal E^{[2]}$ by solving the equations in \eqref{eq-6-3-zh} for $l=2$. We will work in the canonical coordinates $u^1,\dots, u^n$ of $(P_0,P_1)$. According to the results in \cite{liu2005deformations}, we can choose a Miura type transformation  such that $\tilde P_0 = P_0$, $P_1^{[1]}=0$  and the derivation $D_{P_1^{[2]}}$ is given by the 1-form $\tilde D_0\mathcal T$, where $\mathcal T$ is given by
\[
\mathcal T = \int\qd\kk{D_{P_1}\sum_ic_i(u^i)u^{i,1}\log u^{i,1}-D_{P_0}\sum_iu^ic_i(u^i)u^{i,1}\log u^{i,1}}.
\]
Here the functions $c_i(u^i)$ are the central invariants of the deformed bihamiltonian structure $(\tilde P_0, \tilde P_1)$ . Since $P_0^{[2]} = 0$, we can choose $\qg^{[2]} = 0$. Then we have 
\[
W_1^{[2]} = \fk{E^{[0]}}{D_{P_1^{[2]}}}-\ql_1D_{P_1^{[2]}},
\]
here $E^{[0]}=E$ is described as in Theorem \ref{g0-conf}. Let us use $T$ to denote the derivation given by the 1-form $\mathcal T$, then we have
\begin{align*}
W_1^{[2]}&=\fk{E^{[0]}}{\fk{D_{P_0}}{T}}-\ql_1\fk{D_{P_0}}{T}\\
&=-\fk{D_{P_0}}{\fk{T}{E^{[0]}}}-\fk{T}{\fk{E^{[0]}}{D_{P_0}}}-\ql_1\fk{D_{P_0}}{T}\\
&=-\fk{D_{P_0}}{\fk{T}{E^{[0]}}}+\ql_0\fk{D_{P_0}}{T}-\ql_1\fk{D_{P_0}}{T}\\
&=\fk{D_{P_0}}{(\ql_0-\ql_1)T-\fk{T}{E^{[0]}}}.
\end{align*}
Denote by $\qb\in\bar\Qo^1_2$ the 1-form corresponding to the derivation $(\ql_0-\ql_1)T-\fk{T}{E^{[0]}}$, then we see that
\[
\tilde D_1\mathcal E^{[2]} = \tilde D_1\tilde D_0\qa^{[2]} = \mathcal W_1^{[2]} = \tilde D_0 \qb.
\]
To solve this equation for $\mathcal E^{[2]}$, we need to check that $[\tilde D_0\qb]\in \vbh^2_3(\bar\Qo,\tilde D_0,\tilde D_1)$ is trivial. According to Lemma \ref{s3-t8} we only need to check that the indices $ind_i(\qb)$ for $i = 1,\cdots,n$ vanish.

We first note that 
\[ind_i(\mathcal T) = -3c_i(u^i).\]
In another word, if we represent $\mathcal T$ in the form
\[
\mathcal T = \int \sum_i X^i\qd u^i+Y^i\qd\qth_i,\quad X^i\in\hm A^1_2,\quad Y^i\in\hm A^0_2,
\]
where $X^i$ and $Y^i$ are given by
\begin{align*}
X^i &= \sum_j X^{(i)}_{j}\qth_j^2+\sum_{j,k}\left(X^{(i)}_{kj}u^{j,1}\qth_k^1+Z^{(i)}_{jk}u^{k,2}\qth_j\right)+\sum_{j,k,l}Z^{(i)}_{j;kl}u^{k,1}u^{l,1}\qth_j,\\
Y^i &= \sum_j Y^{(i)}_ju^{j,2}+\sum_{j,k}Y^{(i)}_{jk}u^{j,1}u^{k,1},
\end{align*}
then we must have $X^{(i)}_i+Y^{(i)}_i = -3c^i(u^i)f^i$.
From the explicit formula \eqref{gen-e} for $E^{[0]}$ it follows that
\begin{align*}
\fk{T}{E^{[0]}}u^i =& (\ql_1-\ql_0)X^{(i)}_iu^{i,2}-E^{[0]}\kk{X^{(i)}_i}u^{i,2}\\&-X^{(i)}_i(\ql_1-\ql_0+2\mu)u^{i,2}+\cdots\\=&-2\mu X^{(i)}_iu^{i,2}-E^{[0]}\kk{X^{(i)}_i}u^{i,2}+\cdots;\\
\fk{T}{E^{[0]}}\qth_i =& -\kk{\ql_1-(\ql_1-\ql_0)d^i-\mu}Y^{(i)}_i\qth_i^2+E^{[0]}\kk{Y^{(i)}_i}\qth_i^2\\&+\kk{\ql_1-(\ql_1-\ql_0)d^i+\mu}Y^{(i)}_i\qth_i^2+\cdots\\=&2\mu Y^{(i)}_i\qth_i^2+E^{[0]}\kk{Y^{(i)}_i}\qth_i^2+\cdots.
\end{align*}
Here we omit all the terms that do not contribute to the computation of indices. Then by the definition of the index we conclude that 
\[
ind_i(\qb) =3\kk{\ql_1-\ql_0-2\mu-(\ql_1-\ql_0)d^i}c_i(u^i)-3E^{[0]}\kk{c_i(u^i)}. 
\]
The equation $ind_i(\qb) = 0$ is actually an ODE for $c_i(u^i)$, which can 
be easily solved to give the solution
\[c_i(u^i) = C_i(u^i)^{m_i},\quad m_i = \frac{\ql_1-\ql_0-2\mu-(\ql_1-\ql_0)d^i}{\ql_1-\ql_0},\]
where $C_i$ are arbitrary constants. 
Thus we prove the Theorem \ref{g1-conf}.
\section{Conclusion}\label{con}
In this paper, we propose a generalization of the bihamiltonian cohomology, called the variational bihamiltonian cohomology, to deal with more general bihamiltonian flows. The eventual goal for developing the theory of variational bihamiltonian cohomology is to prove the following conjectures.
\begin{Conj}
For any deformation of the bihamiltonian structure of the Principal Hierarchy associated to a semisimple Frobenius manifold with constant central invariants, the Virasoro symmetries $\diff{}{s_m}$ can be deformed to be  symmetries of the deformed integrable hierarchy.
\end{Conj}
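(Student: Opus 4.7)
The plan is to use Proposition~\ref{biham-flow} as the central deformation-theoretic engine. Write $L_m:=\diff{}{s_m}$ for the undeformed Virasoro vector field on the jet space of the Principal Hierarchy. The goal is to produce a unique family of deformations $\tilde L_m\in\mathrm{Der}(\hm A)^0$ and then verify (a) that they satisfy the Virasoro commutation relations, and (b) that they are symmetries of the flows of the deformed integrable hierarchy.

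The first step is to exhibit each $L_m$, written in jet coordinates on $J^\infty(\hat M)$, as an element of $\mathrm{Der}(\hm A)^0$ that commutes with $\qp_x$, $D_{P_0}$ and $D_{P_1}$. The Remark following Proposition~\ref{biham-flow} already singles out Virasoro flows as the motivating example for the generality of that proposition, so such a description should be available for the Principal Hierarchy of a semisimple Frobenius manifold, at least after a normalization that incorporates the Euler vector field and the conformal data of Theorem~\ref{g0-conf}. Proposition~\ref{biham-flow} would then yield a unique lift $\tilde L_m$ commuting with $\qp_x$, $D_{\tilde P_0}$, $D_{\tilde P_1}$ whose leading term is $L_m$.

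The second step, the Virasoro relations and the symmetry property, should then follow by a uniqueness argument. The bracket $[\tilde L_m,\tilde L_n]$ commutes with $\qp_x$, $D_{\tilde P_0}$ and $D_{\tilde P_1}$ by the graded Jacobi identity, and its dispersionless leading term equals $(n-m)L_{m+n}$; the uniqueness clause of Proposition~\ref{biham-flow} then forces $[\tilde L_m,\tilde L_n]=(n-m)\tilde L_{m+n}$. The same template should apply to the commutator of $\tilde L_m$ with each deformed hierarchy flow $\tilde X_{\alpha,p}$ (itself obtained by applying Proposition~\ref{biham-flow} to $\diff{}{t^{\alpha,p}}$): matching leading terms in $\bar\Qo$ and invoking the vanishing of $\vbh^1_d$ from Theorem~\ref{vbh-res} forces the deformed commutator to coincide with the canonical lift of the undeformed commutator, which delivers the symmetry statement.

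The principal obstacle lies in establishing the hypothesis of the first step. While the undeformed Virasoro flows commute with the hierarchy flows of the Principal Hierarchy by construction, the bracket vanishing with both bihamiltonian derivations $D_{P_0}$ and $D_{P_1}$ as elements of $\mathrm{Der}(\hm A)$, and not merely modulo inner derivations $D_H$, is a more delicate requirement; it likely demands a careful choice of representative for $L_m$, possibly involving logarithmic or auxiliary nonlocal pieces compatible with the conformal vector field of Theorem~\ref{g0-conf}. Once this input is secured, the remaining step of lifting $\tilde L_m$ to a linear action on the tau function when all central invariants are constant should reduce to further variational bihamiltonian cohomology computations in the spirit of Sect.~\ref{vsh}, and is expected to make essential use of the exponent constraint \eqref{con-ci} from Theorem~\ref{g1-conf}.
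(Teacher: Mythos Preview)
The statement you are addressing is stated in the paper as a \emph{conjecture}, not a theorem; the paper contains no proof of it. In Sect.~\ref{con} the authors say explicitly that the proof is deferred to the sequel \cite{variational2}, and that it rests not only on the variational bihamiltonian cohomology developed here but also on the construction of \emph{super tau-covers} of tau-symmetric bihamiltonian hierarchies, an ingredient entirely absent from the present paper. So there is no ``paper's own proof'' to compare against; your proposal is a sketch of an attack on an open problem (open as of this paper), and it should be judged as such.

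Viewed that way, the proposal has a genuine gap, and you have already put your finger on it without resolving it. Proposition~\ref{biham-flow} requires the input vector field to (i) lie in $\mathrm{Der}(\hm A)^0_1$, i.e.\ be a local evolutionary derivation of differential degree~$1$, and (ii) commute with $\qp_x$, $D_{P_0}$ and $D_{P_1}$ as derivations of $\hm A$. The Virasoro symmetries $\diff{}{s_m}$ of the Principal Hierarchy fail both hypotheses: they depend explicitly on the time variables $t^{\alpha,p}$ (so they are not elements of $\mathrm{Der}(\hm A)$ at all in the sense of Sect.~\ref{ms}), and even at the dispersionless level they do not commute with $D_{P_a}$---rather, for $m\ge 1$ they satisfy nontrivial commutation relations with the Hamiltonian structures that encode the Virasoro algebra action on the tau function. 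The Remark after Proposition~\ref{biham-flow} flags Virasoro symmetries as a motivation for working in $\mathrm{Der}(\hm A)^\qp$ rather than $\mathrm{Der}(\hm A)^D$, but it does not claim that $L_m$ already satisfies the hypotheses of the proposition; indeed the authors' closing remarks about $\bar{\mathcal E}^k$ controlling ``flows that explicitly depend on at most $k$ time variables'' indicate that a larger complex is needed even to house the $L_m$. Your proposed fix---finding a representative of $L_m$ that literally commutes with both $D_{P_a}$---is therefore not a matter of normalization: no such representative exists in $\mathrm{Der}(\hm A)$, and the missing step is precisely the passage to an enlarged jet space (the super tau-cover) where an analogue of Proposition~\ref{biham-flow} can be formulated and the relevant $\vbh$ groups recomputed. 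That is the substance of \cite{variational2}.
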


The above conjecture is verified for the case $m = -1$ in \cite{dubrovin2018bihamiltonian}. We will study this conjecture in detail and give a proof of it in the second paper \cite{variational2} of this series of papers. The proof is based on the variational bihamiltonian cohomology theory established in this paper and the construction of the super tau-covers for any tau-symmetric bihamiltonian integrable hierarchies which are generalizations of the results given in \cite{liu2020super}.

Among all the possible deformations of the bihamiltonian structure with constant central invariants, there is a particular deformation with all the central invariants being $\frac{1}{24}$. Such a deformation is called the topological deformation, and it is conjectured that:
\begin{Conj}
When the central invariants of the deformation of the bihamiltonian structure of the Principal Hierarchy are all equal to $\frac{1}{24}$, the corresponding deformation of Virasoro symmetries can be represented by linear actions on the tau function.
\end{Conj}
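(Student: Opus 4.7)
The plan is to reduce the conjecture to a comparison between two constructions: the deformed Virasoro symmetries provided by the preceding conjecture (to be proved in paper II), and the Virasoro constraints that define the topological deformation of Dubrovin--Zhang. Since both must produce symmetries with the same hydrodynamic leading term, a uniqueness statement reduces the question of linear action on the tau function to a single specific model, the topological deformation, where linearity is built into the construction.

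First I would establish a uniqueness principle for deformed Virasoro flows: among all deformations of the hydrodynamic Virasoro symmetry $\diff{}{s_m}$ that commute with $\qp_x$, $D_{\tilde P_0}$ and $D_{\tilde P_1}$, the full deformed vector field is uniquely determined by its leading term. Virasoro flows are not Hamiltonian in the strict sense, so they live in $\mathrm{Der}(\hm A)^{\qp}$ rather than $\mathrm{Der}(\hm A)^D$, which is exactly the setting for which the variational bihamiltonian cohomology was designed. The argument parallels Proposition \ref{biham-flow}: decompose $\tilde E_m = \sum_k E_m^{[k]}$ by differential degree and recursively solve $\tilde D_a \mathcal E_m^{[k]} = \mathcal W_a^{[k]}$, using $\vbh^1_2(\bar\Qo,\tilde D_0,\tilde D_1) = 0$ for uniqueness and $\vbh^2_{\ge 4}(\bar\Qo,\tilde D_0,\tilde D_1) = 0$ for existence.

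Next I would identify the deformation with $c_i \equiv \frac{1}{24}$ as the topological deformation. The topological deformation is constructed via the quasi-Miura transformation determined by the loop equation of the semisimple Frobenius manifold; combining the polynomiality results of Buryak--Posthuma--Shadrin for $\tilde P_0$ with the (presently partial) polynomiality of $\tilde P_1$ due to Iglesias--Shadrin would show that this quasi-Miura transformation conjugates the hydrodynamic bihamiltonian structure into a polynomial deformation $(\tilde P_0,\tilde P_1)$ in our class. A direct computation of the central invariants from the genus-one term of the loop equation then yields $c_i \equiv \frac{1}{24}$, and the classification of deformations by central invariants (via $\bh^2_{\ge 2}$ from \cite{DLZ-1,liu2005deformations}) together with the triviality of $\bh^3$ from \cite{carlet2018deformations} shows that, up to a Miura type transformation, this is the unique deformation with those central invariants.

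The hard part is the final step: actually transporting linearity from the phase-space level to the tau function. By construction the Virasoro operators of the topological deformation act linearly on the topological tau function, so by the uniqueness in the first step the Virasoro symmetries produced by the preceding conjecture must agree, after the Miura transformation from the second step, with those intrinsic to the topological deformation. The delicate point is that linearity is a statement about the action on the tau function rather than on the phase space, so one must lift the entire construction to the super tau-cover and verify that the cohomological obstruction to a linear lift vanishes. I expect this to reduce to a cohomology computation on the tau-cover analogue of $\bar\Qo$, with the relevant vanishing ultimately traceable to Theorem \ref{vbh-res}; making this precise, and completing the polynomiality input for $\tilde P_1$, are the deepest ingredients of the program and are the content of the forthcoming paper \cite{variational2}.
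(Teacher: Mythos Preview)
The paper does not prove this statement: it is explicitly labeled a \emph{Conjecture} in the conclusion, with the proof deferred to the sequel \cite{variational2}. So there is no proof in the present paper to compare your proposal against; I can only compare your outlined strategy to the program the authors sketch in the introduction and conclusion.

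Your outline is broadly consonant with the authors' plan---variational bihamiltonian cohomology for uniqueness, super tau-covers for the lift to the tau function---but you have reversed one logical dependency. You propose to \emph{assume} polynomiality of the second Hamiltonian structure under the quasi-Miura transformation (citing the partial results of Iglesias--Shadrin) in order to identify the $c_i\equiv\tfrac{1}{24}$ deformation with the topological deformation, and then read off linearity from the Dubrovin--Zhang construction. The authors' stated program runs the other way: they intend to establish the deformed Virasoro symmetries and their linear action on the tau function \emph{directly}, via the cohomology of this paper and the super tau-cover machinery of \cite{liu2020super}, and then \emph{deduce} polynomiality of the topological bihamiltonian structure as a corollary (see the end of the paragraph in the introduction beginning ``The main purpose of the present paper''). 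Your route would be circular if polynomiality of $\tilde P_1$ is itself among the intended outputs, and in any case the Iglesias--Shadrin input you invoke is, as the paper itself notes, only partial evidence rather than a theorem you can cite. The uniqueness step via $\vbh^1_2=0$ and $\vbh^2_{\ge 4}=0$ is sound and matches Proposition~\ref{biham-flow}, but the identification step needs to be restructured so that it does not presuppose what the program is meant to prove.
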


Apart from the above-mentioned goal, there are some other interesting problems concerning the variational bihamiltonian cohomology itself. For example, the natural map
\[
\qd:\hm F\to\bar\Qo
\]
induces an isomorphism from the bihamiltonian cohomology to the variational bihamiltonian cohomology for most of the bidegrees $(p,d)$, in particular, for the bidegress where both the bihamiltonian cohomology and the variational bihamiltonian cohomology vanish and for the bidegree $(p,d) = (2, 3)$. Then it is natural to ask if this map is indeed a quasi-isomorphism of the complex, and if so can we obtain a homotopy inverse? Also, we can generalize the notion of variational bihamiltonian cohomology to include all the possible differential forms on $J^\infty(\hat M)$, then we have:
\xym{\hm F\ar[r]^\qd &\bar{\mathcal E}^1
\ar[r]^\qd &\bar{\mathcal E}^2\ar[r]^\qd &\bar{\mathcal E}^3\ar[r]^\qd&\cdots}
It is then interesting to consider the corresponding bihamiltonian cohomology on each $\bar{\mathcal E}^k$ and to ask if each $\qd$ is an quasi-isomorphism. We conjecture that the cohomology on $\bar{\mathcal E}^k$ is related to the general $\mathcal W$-symmetry of order $k+1$. In another word, the cohomology on $\hm F$ controls the flows that do not explicitly contain time variables, and the cohomology on $\bar{\mathcal E}^k$ controls the flows that explicitly depend on at most $k$ time variables.

\medskip

\noindent Si-Qi Liu,

\noindent Department of Mathematical Sciences, Tsinghua University \\ 
Beijing 100084, P.R.~China\\
liusq@tsinghua.edu.cn
\medskip

\noindent Zhe Wang,

\noindent Department of Mathematical Sciences, Tsinghua University \\ 
Beijing 100084, P.R.~China\\
zhe-wang17@mails.tsinghua.edu.cn
\medskip

\noindent Youjin Zhang,

\noindent Department of Mathematical Sciences, Tsinghua University \\ 
Beijing 100084, P.R.~China\\
youjin@tsinghua.edu.cn

\end{document}